\newtheorem{theo}{Theorem}[subsection]
\newtheorem*{theo*}{Theorem}
\newtheorem{prop}[theo]{Proposition}
\newtheorem{lemm}[theo]{Lemma}
\newtheorem{coro}[theo]{Corollary}
\newtheorem{conj}[theo]{Conjecture}
\newtheorem{defi}[theo]{Definition}
\theoremstyle{remark}
\newtheorem{rema}[theo]{Remark}
\newcommand{\Z}{\mathbf{Z}}
\newcommand{\Q}{\mathbf{Q}}
\newcommand{\F}{\mathbf{F}}
\newcommand{\Qp}{\mathbf{Q}_p}
\newcommand{\Qpb}{\overline{\mathbf{Q}}_p}
\newcommand{\cE}{\mathcal{E}}
\newcommand{\rhofonctA}{\mathcal{R}_{A,\rho}}
\newcommand{\bG}{\mathbf{G}}
\newcommand{\cG}{\mathcal{G}}
\renewcommand{\O}{\mathcal{O}}
\newcommand{\m}{\mathfrak{m}}
\newcommand{\ul}[1]{{\underline{#1}}}
\newcommand{\ov}[1]{\overline{#1}}
\newcommand{\spec}{\operatorname{Spec}}
\newcommand{\spm}{\operatorname{SpecMax}}
\newcommand{\Aut}{\operatorname{Aut}}
\newcommand{\id}{\operatorname{id}}
\newcommand{\Gal}{\operatorname{Gal}}
\newcommand{\Hom}{\operatorname{Hom}}
\newcommand{\GL}{\operatorname{GL}}
\newcommand{\PGL}{\operatorname{PGL}}
\newcommand{\SL}{\operatorname{SL}}
\newcommand{\unr}{\operatorname{unr}}
\newcommand{\ind}{\operatorname{ind}}
\newcommand{\Fil}{\operatorname{Fil}}
\newcommand{\Noe}{\operatorname{Noe}}
\newcommand{\Rep}{\operatorname{Rep}}
\newcommand{\Mod}{\operatorname{Mod}}
\newcommand{\Proj}{\operatorname{Proj}}
\newcommand{\Vect}{\operatorname{Vect}}
\newcommand{\DdRA}{\operatorname{D}_{\mathrm{dR},A}}
\newcommand{\DdRArho}{\operatorname{D}_{\mathrm{dR},A,\rho}}
\newcommand{\DdR}{\operatorname{D}_{\mathrm{dR},A,\rho}}
\newcommand{\BdR}{\operatorname{B}_{\mathrm{dR}}}
\newcommand{\cyc}{\chi_{\text{cyc}}}
\newcommand{\cc}{\mathcal{C}}
\newcommand{\ssl}{\sigma_{\SL_n}}
\newcommand{\ssd}{\sigma_{\SL_2}}
\newcommand{\sgl}{\sigma_{\GL_n}}
\newcommand{\sgd}{\sigma_{\GL_2}}
\newcommand{\s}{\mathcal{S}}
\newcommand{\e}{\mathcal{E}}
\newcommand{\X}{\mathcal{X}}
\newcommand{\I}{\mathcal{I}}
\newcommand{\W}{\mathcal{W}}
\newcommand{\Gg}{\mathcal{G}}
\newcommand{\Qq}{\mathcal{Q}}
\newcommand{\Ii}{\ul{\mathcal{I}}}
\newcommand{\Ww}{\ul{\mathcal{W}}}
\title[Deformations rings for $PGL_n$]{Potentially semi-stable deformation rings for representations
with values in $\PGL_n$}
\author{Agnès David}
\address{Laboratoire de Mathématiques de Besançon\\
UMR 6623 du CNRS\\
Université de Franche-Comté\\ 
16 route de Gray, 25030 Besançon Cedex\\
France \\
\newline\indent
Université de Rennes\\
CNRS, IRMAR - UMR 6625\\ 
F-35000 Rennes\\ 
France}
\email{david.agnes@math.cnrs.fr}
\author{Sandra Rozensztajn}
\address{UMPA, \'ENS de Lyon\\
UMR 5669 du CNRS\\
46, all\'ee d'Italie\\
69364 Lyon Cedex 07\\
France}
\email{sandra.rozensztajn@ens-lyon.fr}
\begin{document}

\subjclass[2010]{11F80}

\begin{abstract}
We study the potentially semi-stable deformation rings for Galois
representations taking their values in $\PGL_n$, by comparing them to the
deformation rings for $\GL_n$. As an application, we state an analogue of
the Breuil-Mézard conjecture, and we show that the case of $\PGL_n$
follows from the case of $\GL_n$.
\end{abstract}

\maketitle

\tableofcontents

\section{Introduction}

Let $p$ be a prime number, and let $E$ be a finite extension of $\Q_p$.
Let $G$ be a reductive group defined over the ring of integers $\O_E$ of
$E$. Rings classifying potentially semi-stable $p$-adic Galois
representations taking their values in $G(E)$ have been defined by Balaji
in his thesis (\cite{Bal}, see also Bellovin's construction in \cite{Bel16}),
generalizing a construction of Kisin (\cite{Kis08}) for $G = \GL_n$. In
the case where $G = \GL_n$, these rings have been extensively studied, in
terms of properties of their generic fibers. There are also conjectures
on multiplicities of their special fibers, as a generalized Breuil-Mézard
conjecture (see \cite{EG14}), proved in some cases. There are also some
explicit computations of these rings for small values of $n$ and some
specific Hodge types and inertial type (see for example \cite{Sha}).

When the group $G$ is not $\GL_n$ much less is known. There are general
results in \cite{BG} and \cite{BoLe}, and some explicit
computations in \cite{KoMo}.  Our goal in this article is to better
understand these deformation rings in the case where $G = \PGL_n$, with
$n$ prime to $p$, which seems in some ways to be the easiest after the
case of $\GL_n$.

The main reason why $\PGL_n$ is easier to study is that representations
with values in $\PGL_n$ can be related to representations with values in
$\GL_n$, via lifting along the natural map $\GL_n \to \PGL_n$. This
allows us to relate deformation rings for $\PGL_n$ to deformation rings
for $\GL_n$, and hence transfer our knowledge of the rings for $\GL_n$ to
$\PGL_n$.

The tool allowing us to do this comparison is the fact that the
constructions of $p$-adic Hodge theory are functorial with respect to the
reductive group $G$. We explain this functoriality, and how it 
translates into a functoriality for potentially semi-stable deformation
rings. All the main technical tools to do this are already in Balaji's
thesis, but some care must be taken in giving a good Tannakian
construction of the various objects.

Another key point is that any Galois representation with values in
$\PGL_n$ actually lifts to a Galois representation with values in $\GL_n$
if $n$ is prime to $p$, and a potentially semi-stable representation
can be lifted to a potentially semi-stable representation.

Armed with these tools, we are able to relate the potentially semi-stable
deformation rings for $\PGL_n$ to those for $\GL_n$, showing in
particular that the generic fiber of a potentially deformation ring for
$\PGL_n$ is simply a product of some explicitely determined potentially
semi-stable deformation rings for $\GL_n$ (see Theorem
\ref{compdefrings0}). We are also able to give a partial result for the
potentially semi-stable deformation ring itself (see Theorem
\ref{compdefrings}), which has for consequence the fact that we can
completely describe the cycle attached to the special fiber of this
deformation rings in terms of those for $\GL_n$ (Theorem
\ref{cyclesdefrings}).

As an application of this description of these potentially semi-stable
deformation rings, we state Breuil-Mézard type conjectures for the cycles
attached to their special fibers (Conjecture \ref{geomBMPGLnconj}, and prove
that they are consequences of conjectures for deformation rings for
$\GL_n$, as stated in \cite{EG14} (Theorem \ref{geomBMPGLn}).  Moreover,
we are able to show that results for $\PGL_n$ can be deduced from those
for $\GL_n$. In particular, we obtain unconditional results for
representations of the absolute Galois group of $\Q_p$ with values in
$\PGL_2$ (Theorem \ref{geomBMPGL2Qp}).

Given the existence of potentially semi-stable deformation rings defined
for any connected reductive group $G$, it would be interesting to state
analogues of the Breuil-Mézard conjectures for the cycles attached to the
special fiber of these rings in larger generality.
However, the Breuil-Mézard conjectures relate the cycles
attached to the special fibers, and their multiplicities, to automorphic
multiplicities that are defined in terms of the local Langlands program,
and in particular an inertial Langlands correspondence. This
correspondence is completely known for $\GL_n$, but not for other groups.
Even for $\PGL_n$ we have to resort to some ad hoc constructions (see
Paragraph \ref{deftypesn}).  In
order to state these conjectures for other groups we would need a better
understanding of this inertial correspondence in general, although this
can probably be done for some groups $G$ beyond $\GL_n$ and $\PGL_n$.

\subsection{Plan of the article}

In Section \ref{Grepr} we study the theory of $G$-representations for $p$-adic Galois
representations, and $p$-adic Hodge theory. We recall results and
constructions by Balaji, and prove that these constructions are
functorial in the group $G$.

In Section \ref{lifts} we study the problem of lifting a $p$-adic Galois
representation from $\PGL_n$ to $\GL_n$, respecting some conditions
coming from $p$-adic Hodge theory.

In Section \ref{potringspgln} we apply the previous results to the study
of potentially semi-stable deformation rings for $\PGL_n$, and we show
that we can express these rings in terms of potentially semi-stable
deformations rings for $\GL_n$.

Finally in Section \ref{BM} we state a Breuil-Mézard conjecture for
representations with values in $\PGL_n$, and we show that these
conjectures would follow from analogous conjectures for $\GL_n$, up to
some hypothesis. In order to state this conjectures we make some
constructions related to the inertial Langlands correspondence.

\subsection{Notation}

Let $p$ be a prime number and let $n \geq 2$ an integer prime to $p$.

Let $K$ be a finite extension of $\Q_p$. We will be interested in
representations of its absolute Galois group $\Gamma_K =
\Gal(\Qpb / K)$. Let $I_K$ be the inertia subgroup, and $W_K$ the Weil
group. We denote by $q$ the
cardinality of the residue field of $K$.  
We normalize the correspondence of local class field theory so that a
uniformizer of $K$ corresponds to a geometric Frobenius.
Let $\cE$ be the set of
embeddings of $K$ in $\Qpb$ ; an element of $\cE$ will generally be
denoted by $\iota$.

Denote by $\cyc$ the $p$-adic cyclotomic character, and by $\omega$ its
reduction modulo $p$. Denote by $\unr(x)$ the unramified character that
sends a geometric Frobenius to $x$, for $x \in \Qpb^\times$ or
$\ov\F_p^\times$.

We fix another finite extension $E$ of $\Qp$, which will serve as a
coefficient field for our Galois representations. As such, we will
sometimes enlarge it to a finite extension, in order to obtain a field
that is ``large enough so that everything is defined on $E$''. We denote
by $\O_E$ its ring of integers, by $\varpi_E$ a uniformizer, and by $\F$
its residue field.

We denote by $G$ a connected reductive group in section
\ref{Grepr}. In the following sections $G$ will always be either $\GL_n$
or $\PGL_n$.

Let $u : \GL_n \to \PGL_n$ be the natural homorphism of algebraic groups.
For a ring $A$, we denote by $u_A$ the natural projection from $\GL_n(A)$
to $\PGL_n(A)$.

\subsection{Acknowledgements}

We would like to thank Robin Bartlett and Christophe Breuil for their
questions and comments on this paper. 
Both authors are members of the A.N.R. project CLap-CLap ANR-18-CE40-0026.

\section{$p$-adic Hodge theory for $G$-representations}
\label{Grepr}

\subsection{Tannakian constructions}

In this section, $E$ denotes a fixed finite extension of $\Qp$, with a
given embedding into $\ov{\Q}_p$.  Let $G$ be a connected reductive group
over $E$.

We consider the category  $\Rep_E G$ of algebraic representations of
$G$ over finite dimensional $E$-vector spaces.  Objects of
$\Rep_E G$ are pairs $(V,\pi_V)$, where $V$ is a finite dimensional
vector space over $E$ and $\pi_V$ is a homomorphism of algebraic groups
from $G$ to $\GL_V$. More generally, if $R$ is an $E$-algebra we have the
category $\Rep_R G$ of algebraic representations of
$G$ over finite rank free $R$-modules, and we have a natural functor
$\Rep_E G \to \Rep_RG$.

We denote by $\Vect_E$ the category of finite dimensional $E$-vector
spaces.  Let $R$ be an $E$-algebra. We denote by $\Mod_R$ the category of
finitely generated $R$-modules and $\Proj_R$ the subcategory of those
that are projective over $R$.

For every $R$-algebra $R'$, we denote by $\phi_R^{R'}$ the extension of
scalars from $R$ to $R'$, a functor from $\Mod_R$ to $\Mod_{R'}$, sending
$\Proj_R$ to $\Proj_{R'}$.

We denote by $ \omega^G : \Rep_E G \rightarrow \Vect_E$ the forgetful
functor. If $R$ is an $E$-algebra, we denote by $\omega^G_R$ the functor
$\phi_E^R \circ \omega^G$.  Then $\omega^G_R$ is a fiber functor on
$\Rep_EG$ with values in $\Mod_R$, that is, a $E$-linear exact faithful
tensor functor from $\Rep_E G$  to $\Mod_R$ taking values in $\Proj_R$,
and it factors through $\Rep_R G$.

Let $R$ be an $E$-algebra and $N$ a tensor functor from $\Rep_E G$ to
$\Proj_{R}$. We denote by $\ul\Aut^\otimes(N)$ the functor on
$R$-algebras sending the $R$-algebra $R'$ to the group of automorphisms
of tensor functors of $\phi_R^{R'}\circ N$.  Then
$\ul{\Aut}^\otimes(\omega^G_R) = G_R$ (\cite[Theorem 2.11]{DM})

We recall now a fundamental result from the theory of Tannakian
categories.

\begin{theo}[\cite{SaRi}, Theorem 4.2.2]
\label{funTan}
Let $\eta_1$ and $\eta_2$ be two fiber functors on $\Rep_E G$  with
values in $\Mod_R$.  There exists a faithfully flat $R$-algebra $R'$ such
that $\phi_R^{R'} \circ \eta_1$ and $\phi_R^{R'} \circ \eta_2$  are
isomorphic as tensor functors.

Let $\eta$ be a fiber functor on $\Rep_E G$ with values in $\Mod_R$.  The
functor $\underline{\Aut}^\otimes(\eta)$ is representable by an affine
group scheme over $R$, which we denote by $G^\eta$. 
An isomorphism $\alpha$ between $\phi_R^{R'}
\circ \eta$ and $\omega^G_{R'}$ induces an isomorphism of
algebraic groups between $G_{R'}$ and $G^{\eta}_{R'}$. Another
isomorphism between $\phi_R^{R'}
\circ \eta$ and $\omega^G_{R'}$ induces an isomorphism of
algebraic groups between $G_{R'}$ and $G^{\eta}_{R'}$ which is conjugate
to the first one by an element of $G_{R'}$.
\end{theo}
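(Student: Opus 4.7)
The plan is to follow the standard tannakian approach, reducing everything to properties of the scheme of tensor isomorphisms. Introduce the functor $\ul{\operatorname{Isom}}^\otimes(\eta_1,\eta_2)$ on $R$-algebras sending $R'$ to the set of tensor isomorphisms $\phi_R^{R'}\circ\eta_1 \isom \phi_R^{R'}\circ\eta_2$. The core technical input is to show this functor is representable by an affine $R$-scheme $X$ that is faithfully flat over $R$ and a bi-torsor under $\ul\Aut^\otimes(\eta_2)$ on the left and $\ul\Aut^\otimes(\eta_1)$ on the right. Granted this, setting $R' = \Gamma(X,\O_X)$ produces a faithfully flat $R$-algebra over which the tautological element of $X(R')$ gives the desired tensor isomorphism, proving the first assertion.

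For representability, I would fix a faithful representation $(V,\pi_V)$ of $G$, which exists since $G$ is affine. Because $\Rep_E G$ is tensor-generated by $V$ and $V^\vee$ in the rigid tensor sense, a tensor isomorphism is determined by its value on $V$, exhibiting $X$ as a closed subscheme of the affine scheme $\ul\Hom(\eta_1(V),\eta_2(V))$ cut out by the closed conditions expressing compatibility with tensor products and with $G$-equivariant morphisms among tensor constructions of $V$ and $V^\vee$. The same reasoning applied to $\eta_1 = \eta_2 = \eta$ represents $\ul\Aut^\otimes(\eta)$ by an affine $R$-scheme $G^\eta$, yielding the second assertion.

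The main obstacle is to establish that $X$ is faithfully flat over $R$; this is where the tannakian hypotheses really bite, and one uses the exactness of fiber functors together with the reductivity of $G$ (so that $\Rep_E G$ is semisimple). The standard route is to check fpqc-locally on $\spec R$ that $X$ becomes a trivial torsor under $G^{\eta_1}$, so flat, and then descend faithful flatness; alternatively one can identify $\Gamma(X,\O_X)$ with an explicit Ind-construction involving the representation ring of $G$ and argue flatness directly. Once $X$ is known to be representable and faithfully flat, the first two parts follow; this is the step at which one usually defers to Saavedra Rivano rather than arguing from scratch.

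For the last assertion, given an isomorphism $\alpha : \phi_R^{R'}\circ\eta \isom \omega^G_{R'}$, conjugation $\gamma \mapsto \alpha \circ \gamma \circ \alpha^{-1}$ defines a natural transformation $G^\eta_{R'} \isom G_{R'}$, which is automatically an isomorphism of affine group schemes since it comes from an isomorphism of the underlying fiber functors. If $\alpha'$ is a second such isomorphism, then $g := \alpha' \circ \alpha^{-1}$ lies in $\ul\Aut^\otimes(\omega^G_{R'})(R') = G(R')$, and a direct computation shows the two resulting isomorphisms $G_{R'} \isom G^\eta_{R'}$ differ by inner conjugation by $g$, as required.
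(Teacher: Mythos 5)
The paper gives no proof of this statement---it is quoted directly from Saavedra Rivano---and your sketch is precisely the standard argument underlying that citation: representability and faithful flatness of the tensor-Isom scheme as a bitorsor under the two automorphism group schemes, followed by the transport-of-structure computation for the final assertion about conjugacy. The outline is correct, and you rightly flag faithful flatness of the torsor as the one step requiring the full Tannakian machinery (the only quibble: reductivity of $G$, hence semisimplicity of $\Rep_E G$, is not actually needed there---the theorem holds for any affine group scheme over a field).
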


Following formally the definitions we get:

\begin{prop}[Functoriality property]
\label{functoriality}
Let $H$ be a smooth connected reductive group over $E$, let $f : G
\rightarrow H$ be a map of algebraic groups, so that it induces a map of
tensor categories $f^* : \Rep_EH \rightarrow  \Rep_EG$. Let $\eta$ be a
fiber functor on $\Rep_EG$ with values in $\Mod_R$.

Then $f$ induces a map of algebraic groups $f^{\eta} : G^{\eta} \rightarrow
H^{\eta \circ f^*}$ satisfying the following condition:

Let $R'$ be a faithfully flat $R$-algebra such that $\phi_R^{R'}\circ \eta$ and
$\omega^G_{R'}$ are isomorphic, and choose any such isomorphism
$\alpha : \omega^G_{R'} \to \phi_R^{R'}\circ \eta$.
Then we have the following commutative
diagram:
\[
\begin{CD}
G_{R'} @>f_{R'}>>  H_{R'} \\
@VV{\alpha}V       @VV{f^*\alpha}V  \\
G^{\eta}_{R'} @>f^{\eta}_{R'}>> H^{\eta \circ f^*}_{R'}
\end{CD}
\]
where $f^*\alpha : \omega_{R'}^G \circ f^* = \omega_{R'}^H \to \phi_R^{R'}\circ
\eta \circ f^*$ is the isomorphism coming from $\alpha$.
\end{prop}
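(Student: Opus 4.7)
My plan is to prove this by a formal unwinding of the definitions of fiber functors and their Tannakian automorphism group schemes, together with the universal property implicit in Theorem \ref{funTan}.

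First I would check that $\eta \circ f^*$ is indeed a fiber functor on $\Rep_E H$ with values in $\Mod_R$, so that Theorem \ref{funTan} produces the affine group scheme $H^{\eta \circ f^*} = \underline{\Aut}^\otimes(\eta \circ f^*)$ over $R$. The pullback functor $f^*$ is $E$-linear, exact (underlying vector spaces are unchanged, only the group action is modified), faithful, and tensor, so composing with the fiber functor $\eta$ preserves all of these properties and keeps values in $\Proj_R$.

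Next I would define $f^\eta$ directly on functors of points. For each $R$-algebra $R'$, an element of $G^\eta(R')$ is a tensor automorphism $\lambda = (\lambda_V)_{V \in \Rep_E G}$ of $\phi_R^{R'} \circ \eta$. Setting
\[
f^\eta(\lambda)_W := \lambda_{f^*W}, \qquad W \in \Rep_E H,
\]
yields a tensor automorphism of $\phi_R^{R'} \circ \eta \circ f^*$: naturality in $W$ follows from naturality of $\lambda$ applied to morphisms of $G$-representations of the shape $f^*\varphi$, and tensor compatibility follows because $f^*$ commutes with tensor products. This construction is functorial in $R'$ and so, by representability, yields the desired morphism of group schemes $f^\eta : G^\eta \to H^{\eta \circ f^*}$.

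Finally, I would verify the commutative diagram by tracing $g \in G(R')$ both ways. Under the right vertical $f^*\alpha$, the element $f_{R'}(g) \in H(R')$ is sent to the automorphism whose component at $W \in \Rep_E H$ is $(f^*\alpha)_W \circ \pi_W(f(g)) \circ (f^*\alpha)_W^{-1} = \alpha_{f^*W} \circ \pi_W(f(g)) \circ \alpha_{f^*W}^{-1}$, by the very definition $(f^*\alpha)_W = \alpha_{f^*W}$. Going the other way, $\alpha$ sends $g$ to the element of $G^\eta(R')$ with component $\alpha_V \circ \pi_V(g) \circ \alpha_V^{-1}$ at $V$, and then $f^\eta$ extracts the component at $V = f^*W$, giving $\alpha_{f^*W} \circ \pi_{f^*W}(g) \circ \alpha_{f^*W}^{-1}$. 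Since by definition of $f^*$ the $G$-representation $\pi_{f^*W}$ equals $\pi_W \circ f$, both paths agree.

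I do not anticipate a serious obstacle: the statement is purely a Tannakian book-keeping exercise. The one point requiring care is to confirm that $f^*\alpha$, defined component-wise by $(f^*\alpha)_W = \alpha_{f^*W}$, is a genuine isomorphism of fiber functors $\omega^H_{R'} \to \phi_R^{R'} \circ \eta \circ f^*$ on $\Rep_E H$ (not merely a family of module isomorphisms), which is immediate from the identity $\omega^H \circ f^* = \omega^G$ together with the tensor-naturality of $\alpha$.
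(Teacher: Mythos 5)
Your proof is correct and is precisely the formal unwinding that the paper alludes to when it introduces the proposition with ``Following formally the definitions we get'' (no further proof is given there). The definition $f^\eta(\lambda)_W = \lambda_{f^*W}$ and the two-way trace of $g\in G(R')$ through the square are exactly the intended argument.
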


\subsection{$p$-adic Hodge theory} 
\label{padicHodge}

Let $A$ be a local finite $E$-algebra and $F$ its residue field, so that $F$
is a finite extension of $\Q_p$.

We recall some definitions and constructions from $p$-adic Hodge theory.

\subsubsection{de Rham, semi-stable, crystalline $A$-representations of
$\Gamma_K$}

We denote by $\Rep_A \Gamma_K$ the category of finitely generated
projective $A$-modules with a continuous $A$-linear action of $\Gamma_K$.
We call an object $M$ in $\Rep_A \Gamma_K$ de Rham (resp. semi-stable,
crystalline) if it is de Rham (resp.~semi-stable, crystalline) as a
$\Qp$-(finite dimensional)-representation of $\Gamma_K$.  We then denote
by $\Rep^{\mathrm{dR}}_A \Gamma_K$ (resp.~$\Rep^{\mathrm{st}}_A
\Gamma_K$, $\Rep^{\mathrm{cr}}_A \Gamma_K$) the full subcategory of
$\Rep_A \Gamma_K$ of de Rham (resp.~semi-stable, crystalline)
representations.

All these categories are Tannakian, as the property of being de Rham
(resp.~semi-stable, crystalline) is preserved by direct sum, sub-object,
quotient, tensor product, $\Hom$, with a fiber functor given by the
underlying $A$-module (\cite{Fon94}).

\subsubsection{Modules attached to the representations}

Recall that $\BdR$ is one of Fontaine's rings of periods (\cite{Fon82}).
The action of $\Gamma_K$ on $\BdR$ extends to a continuous $A$-linear
action on $A \otimes_{\Qp} \BdR$, satisfying: $\left(A \otimes_{\Qp}
\BdR \right)^{\Gamma_K} = A \otimes_{\Qp} K$ (\cite[Prop 2.2.7
(1)]{Bal}).  For
every $M$ in $\Rep_A \Gamma_K$, this gives a structure of  $A
\otimes_{\Qp} K$-module on $\left( M \otimes_{\Qp} \BdR
\right)^{\Gamma_K}$.  We then denote by $\Mod_{A \otimes_{\Qp} K}$ the
category of $A \otimes_{\Qp} K$-modules and
$\DdRA$ the functor from
$\Rep_A \Gamma_K$ to $\Mod_{A \otimes_{\Qp} K}$
sending $M$ to
$\left( M \otimes_{\Qp} \BdR \right)^{\Gamma_K}$.
For $M$ in $\Rep_A \Gamma_K$, $\DdRA(M)$ agrees with the usual definition
of $D_{\mathrm{dR}}(M)$, where $M$ is considered as a finite dimensional
$\Qp$-representation of $\Gamma_K$; $\DdRA(M)$ here in addition keeps
track of the $A$-module structure on $M$.

The properties of the functor $\DdRA$ on the category
$\Rep^{\mathrm{dR}}_A \Gamma_K$ are given in \cite[Prop 2.2.9]{Bal}. 
If $M$ is in $\Rep^{\mathrm{dR}}_A \Gamma_K$ then $\DdRA(M)$ is
projective over $A\otimes_{\Qp}K$. Moreover $\DdRA$ is an
$A$-linear exact faithful tensor functor on $\Rep^{\mathrm{dR}}_A
\Gamma_K$, and as it takes its values in the subcategory
$\Proj_{A\otimes_{\Qp}K}$ of finitely generated projective
$A\otimes_{\Qp}K$-modules, it is a fiber functor 
on $\Rep^{\mathrm{dR}}_A
\Gamma_K$ (and similarly for the subcategories $\Rep^{\mathrm{st}}_A
\Gamma_K$ and $\Rep^{\mathrm{cr}}_A \Gamma_K$).

\subsection{$G$-objects}

\subsubsection{General definition}

We want to define what is a representation with values in $G$ satisfying
certain properties. We refer to \cite[Appendix A]{Bel16} for other examples
of such constructions.

We first give a Tannakian definition.
Let $\cc$ be a tensor category. A $G$-object with values in $\cc$ 
is an exact faithful tensor functor from $\Rep_EG$ to $\cc$.
The categories $\cc$ that we will consider are of the following form: fix
some group $\cG$, some $E$-algebra $A$, and consider representations of $\cG$
on finite-rank projective $A$-modules satisfying some properties that
make $\cc$ into a Tannakian category with fiber functor the forgetful
functor from $\cc$ to $\Mod_A$ (see examples below).


A more naive definition would be to take maps $r : \cG \to G(A)$ such that
for any algebraic representation $f : G \to \GL_n$, the map $f\circ r 
: \cG \to \GL_n(A)$ satisfy the properties we want.

We can go from the naive definition to the Tannakian one as follows:
start with $r : \cG \to G(A)$, and define the functor $\ul{r} : \Rep_EG
\to \cc$ by the following formula: let $f : G \to \GL_n$ be an algebraic
representation, then it defines $f_A : G(A) \to \GL_n(A)$, and we set
$\ul{r}(f)$ to be $f_A \circ r$.

Going in the other direction is not as straightforward. Fix some
$E$-algebra $A$ and $\cc$ a category of representations on $\cG$ on
$A$-modules as before. We denote by $\omega_\cc$ the forgetful functor
$\cc \to \Mod_A$.

\begin{prop}
\label{Gobjtonaive}
Let $\ul{r}$ be a $G$-object with values in $\cc$.
Then we can construct an inner form $G'$ of $G$ on $A$, and a representation
$r' : \cG \to G'(A)$. They satisfy the following property: 
the functors $\Rep_EG \to \cc$ given by
$\ul{r}$ and coming from $r'$ are the same.
\end{prop}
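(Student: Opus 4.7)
The plan is to apply Tannakian reconstruction (Theorem \ref{funTan}) to the composite fiber functor obtained from $\ul{r}$, and then to read off the $\cG$-action on the underlying modules as a homomorphism into the reconstructed group.

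First I would set $\eta := \omega_\cc \circ \ul{r} : \Rep_E G \to \Mod_A$. Since $\omega_\cc$ is a fiber functor on $\cc$ with values in $\Mod_A$ and $\ul{r}$ is an exact faithful tensor functor by hypothesis, the composite $\eta$ is an $E$-linear exact faithful tensor functor landing in $\Proj_A$, i.e.\ a fiber functor on $\Rep_E G$ with values in $\Mod_A$. Applying Theorem \ref{funTan} to $\eta$, the functor $\ul\Aut^\otimes(\eta)$ is representable by an affine $A$-group scheme which I will call $G' := G^\eta$, and there exists a faithfully flat $A$-algebra $A'$ together with an isomorphism of tensor functors $\alpha : \omega^G_{A'} \isom \phi_A^{A'} \circ \eta$. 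The isomorphism $\alpha$ induces an isomorphism $G_{A'} \isom G'_{A'}$, and any other choice of $\alpha$ induces an isomorphism differing by conjugation by an element of $G(A')$. This is exactly the condition that $G'$ is an inner form of $G_A$.

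Next I would construct $r'$. The category $\cc$ is, by assumption, a category of representations of the group $\cG$ on projective $A$-modules with $\omega_\cc$ the forgetful functor; in particular each object of $\cc$ comes equipped with a $\cG$-action by $A$-linear automorphisms, and morphisms of $\cc$ are $\cG$-equivariant. For fixed $g \in \cG$, the family of $A$-linear automorphisms $g_{\ul{r}(V,\pi_V)}$ of $\eta(V,\pi_V) = \omega_\cc(\ul{r}(V,\pi_V))$, indexed by $(V,\pi_V) \in \Rep_E G$, is natural in $(V,\pi_V)$ (by $\cG$-equivariance of morphisms) and compatible with tensor products (since $\cG$ acts diagonally on tensor products in $\cc$). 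Hence it defines an element of $\ul\Aut^\otimes(\eta)(A) = G'(A)$. Varying $g$ gives a group homomorphism $r' : \cG \to G'(A)$.

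Finally I would check that the $G$-object associated to $r'$ coincides with $\ul{r}$. By Tannakian reconstruction for $G'$, the fiber functor $\eta$ factors canonically through the category of algebraic representations of $G'$ on projective $A$-modules: to each $(V,\pi_V) \in \Rep_E G$ is attached a representation $\pi'_V : G' \to \GL(\eta(V,\pi_V))$, which after the faithfully flat base change $A'$ identifies (via $\alpha$) with $\pi_V$ acting on $V \otimes_E A'$. Pulling back along $r'$ yields a $\cG$-action on $\eta(V,\pi_V)$, and by the very definition of $r'$ this recovers the $\cG$-action on $\omega_\cc(\ul{r}(V,\pi_V))$. Since morphisms in $\cc$ are $\cG$-equivariant and $\omega_\cc$ is faithful, the resulting object of $\cc$ is canonically $\ul{r}(V,\pi_V)$, and naturality in $(V,\pi_V)$ gives the equality of functors.

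The essentially formal part is the construction of $r'$ and the inner form structure; the one subtlety to watch is that there is no preferred identification of $G'$ with $G$, so the statement "the functors coming from $r'$ and from $\ul{r}$ are the same" must be interpreted intrinsically, i.e.\ as an equality of tensor functors $\Rep_E G \to \cc$ where the functor attached to $r'$ is built from the tautological $G'$-representation structure on $\eta$ rather than from a splitting of $G' \cong G_A$. The step that needs the most care is therefore verifying tensor compatibility and naturality when defining $r'$, but both follow immediately from the axioms imposed on $\cc$.
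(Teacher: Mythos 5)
Your proposal is correct and follows essentially the same route as the paper: both take $\eta = \omega_\cc \circ \ul{r}$, set $G' = \ul\Aut^\otimes(\eta)$, obtain $r'$ from the tensor-automorphisms of $\eta$ induced by elements of $\cG$, and compare the two functors via Tannakian reconstruction. You have merely filled in the details that the paper leaves as ``we see easily'' and ``unravelling the definitions,'' which is fine.
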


\begin{proof}
Let $\nu = \omega_\cc \circ \ul{r}$. Then $\nu$ is a fiber functor with
values in $\Mod_A$ on the category $\Rep_EG$. Let $G' =
\ul{\Aut}^\otimes(\nu)$. We see easily that any element $g$ of $\cG$
defines an element of $G'(A)$, so that we get a representation $r' : \cG
\to G'(A)$. 

The representation
$r'$ gives rise to a functor $\Rep_AG' \to \cc$ as explained before.

We also have a canonical equivalence of categories $\Rep_AG
\to \Rep_AG'$. Composing with the natural functor $\Rep_EG \to \Rep_AG$,
and the functor $\Rep_AG' \to \cc$, 
we get a functor $\Rep_EG \to \cc$, which we can compare to $\ul{r}$. 
Unravelling the definitions gives the comparison.
\end{proof}

\begin{rema}
\label{remGobjtonaive}
Observe that
the functors $\omega^G_A$ and $\nu$ are both fiber functors on $\Rep_EG$,
so by Theorem \ref{funTan} there exists a faithfully flat $A$-algebra $B$
such that $\phi_A^B\circ \nu$ and $\omega_B^G$ are isomorphic, and so
$G_B$ and $G'_B$ are isomorphic. So $r'$ gives rise to a representation 
$\cG \to G(B)$ which is well-defined up to conjugation by an element of
$G(B)$.
Note that if $A$ is an algebraically closed field then 
we can take $A=B$, so no extension is needed,
but the isomorphism is not unique so the representation into $G(A)$ is
still only defined up to conjugation.
\end{rema}

We will only apply this inverse construction in the case where $A =
\ov\Q_p$ (in Paragraph \ref{inertialtype}).
In general we will start from the naive definition and
then use the $G$-object attached to it.

\subsubsection{Example: de Rham, semi-stable, crystalline $G$-representations of
$\Gamma_K$}
\label{sssec:dRGrepGal}

Let $A$ be an $E$-algebra as in Paragraph \ref{padicHodge}.
We fix  a continuous homomorphism $\rho$  from $\Gamma_K$ to $G(A)$.
We consider the $G$-object 
$\rhofonctA : \Rep_EG  \rightarrow \Rep_A \Gamma_K$ attached to it.

We say that $\rho$ is de Rham, or a de Rham $G$-representation of
$\Gamma_K$ (resp. semi-stable, crystalline) if the functor $\rhofonctA$
takes its values in the subcategory $\Rep^{\mathrm{dR}}_A \Gamma_K$
(resp. $\Rep^{\mathrm{st}}_A \Gamma_K$, $\Rep^{\mathrm{cr}}_A \Gamma_K$).
That is, we take for the category $\cc$ one of the categories
$\Rep^{\mathrm{dR}}_A \Gamma_K$, $\Rep^{\mathrm{st}}_A \Gamma_K$,
$\Rep^{\mathrm{cr}}_A \Gamma_K$.

Equivalently $\rho$ is de Rham (resp. semi-stable, crystalline)  if and
only if there exists a faithful algebraic
$E$-representation $V$ of  $G$ such that $\rhofonctA(V)$ is de Rham
(resp. semi-stable, crystalline) as an $A$-representation of $\Gamma_K$.

We say that $\rho$ is potentially semi-stable (resp. potentially
crystalline) if there exists a finite extension $L$ of $K$ such that the
restriction $\rho_{| \Gal(\overline{K} / L)}$ is semi-stable
(resp. crystalline) as a representation of $\Gamma_L$.

\subsubsection{Example: smooth representations of $W_K$ or $I_K$}
\label{exsmooth}

Here we take $\cG = W_K$ or $\cG = I_K$, and for $\cc$ the category of
finite-dimensional representations of $\cG$ with coefficients in
$\ov\Q_p$ with open kernel. Let $\ul{r}$ be a $G$-object with values in
$\cc$, then it defines a finite-dimensional representation $r : \cG \to
G(\ov\Q_p)$, which is well-defined up to conjugation by an element of
$G(\ov\Q_p)$. Then $r$ has an open kernel, as we can see by considering a
faithful representation of $G$.

\subsection{The Hodge type of a de Rham $G$-representation of $\Gamma_K$}

Let $A$ be an $E$-algebra as in Paragraph \ref{padicHodge}.
Let $\rho$ be a de Rham representation $\Gamma_K \to G(A)$.  In
\cite[\S 2.4]{Bal}, Balaji attaches to such a  $\rho$ a Hodge type for
$G$.  We recall briefly his construction, and give another way of looking
at it.


\subsubsection{Definition of the Hodge type}

By \cite[proposition 2.3.3]{Bal}, the composition of functors
$\DdRArho := \DdRA \circ \rhofonctA$ from $\Rep_EG$ to $\Mod_{A
\otimes_{\Qp} K}$ sending
$V$ to $\left( \rhofonctA(V) \otimes_{\Qp} \BdR \right)^{\Gamma_K} =
\left( (A \otimes_K V) \otimes_{\Qp} \BdR \right)^{\Gamma_K}$
is a fiber functor.
We denote by $G'$ the affine groupe scheme over $A \otimes_{\Qp} K$
representing the functor $\ul{\Aut}^\otimes \left( \DdRArho \right)$
(note that $G'$ depends on $\rho$).

We denote by $\BdR^+$ the valuation ring of $\BdR$ and, for every $i$ in
$\Z$, $\Fil^i \BdR^+$ the $i$-th power of its maximal ideal.
For every  $i$ in $\Z$, we define the subfunctor of  $\DdR$ :
 $$
 \begin{array}{r r c l}
 \Fil^i \DdR: & \Rep_EG &\rightarrow & \Mod_{A \otimes_{\Qp} K}\\
   & V & \mapsto & \left(\rhofonctA(V) \otimes_{\Qp} \Fil^i \BdR^+ \right)^{\Gamma_K}. \\
 \end{array}
 $$
By \cite[Prop 2.4.2]{Bal}, the subfunctors $\left( \Fil^i \DdR
\right)_{i \in \Z}$ form an exact tensor filtration of $\DdR$ (see
\cite[IV \S 2.1]{SaRi} for the precise definition).
As $G$ is smooth and we are in characteristic $0$, by \cite[IV \S 2.4
Théorème and IV Prop 2.2.5]{SaRi}, the filtration $(\Fil^i \DdR)_{i \in
\Z}$ is splittable.  This means that there exists a graduation
of $\DdRArho$, compatible to tensor products, by subfunctors $(M^i
\DdRArho)_{i \in \Z}$ such that the filtration is associated to this
graduation.  

Let us fix such a splitting $(M^i \DdRArho)_{i \in \Z}$ of
$(\Fil^i \DdR)_{i \in \Z}$. Then the data of the splitting is the same as
the data of a cocharacter $\mu$ of $G'$ in the following way: for every  $A
\otimes_{\Qp} K$-algebra $B$, $z$ in $B^\times$ and $V$ in $\Rep_E G$,
$\mu(z)_V$ is the $B$-module automorphism of $$ \DdRArho(V) \otimes_{A
\otimes_{\Qp} K} B = \bigoplus_{i \in \Z}  M^i \DdRArho(V) \otimes_{A
\otimes_{\Qp} K}  B $$ acting on the factor $M^i \DdRArho(V) \otimes_{A
\otimes_{\Qp} K}  B$ by $z^i$.  

The cocharacter $\mu$ of $G'$ depends on the choice
of the splitting $(M^i \DdRArho)_{i \in \Z}$: two different splittings of
$(\Fil^i \DdR)_{i \in \Z}$ give two cocharacters that are conjugate by
$G'(A \otimes_{\Qp} K)$ (more precisely they are conjugate by an element
of the parabolic subgroup defined by the filtration).

By reduction modulo $\m_A$, the $G'(A \otimes_{\Qp} K)$-conjugacy class
$\mathrm{Cl(\mu)}_{G'(A \otimes_{\Qp} K)}$ of $\mu$ determines an unique
$G'(F \otimes_{\Qp} K)$-conjugacy class
$\mathrm{Cl(\overline{\mu})}_{G'(F \otimes_{\Qp} K)}$ of cocharacters of
$G'_{F \otimes_{\Qp} K}$ (see \cite[Prop 2.4.5]{Bal}), where $F$ is the
residue field of $A$, and so a finite extension of $E$.

Then, $\mathrm{Cl(\overline{\mu})}_{G'(F \otimes_{\Qp} K)}$  determines
an unique $G'(\overline{E} \otimes_{\Qp} K)$-conjugacy class
$\mathrm{Cl(\overline{\mu})}_{G'(\overline{E} \otimes_{\Qp} K)}$ of
cocharacters of  $G'_{\overline{E} \otimes_{\Qp} K}$.

Because $G'_{\ov{E} \otimes_{\Qp} K}$ is an inner form of $G_{\ov{E}
\otimes_{\Qp} K}$,
$\mathrm{Cl(\overline{\mu})}_{G'(\overline{E} \otimes_{\Qp} K)}$
determines an unique $G(\overline{E} \otimes_{\Qp} K)$-conjugacy class of
cocharacters of  $G_{\overline{E} \otimes_{\Qp} K}$. This is what Balaji
calls the Hodge type of $\rho$. Remark that we can also see the Hodge
type as a family indexed by $\e = \Hom(K,\ov\Q_p)$ of
$G(\ov{E})$-conjugacy classes of cocharacters of $G_{\ov{E}}$.

\begin{defi}
Let $\rho : \Gamma_K \to G(E)$ be a potentially semi-stable
representation. Its Hodge type is the family indexed by $\e$ of
$G(\ov\Q_p)$-conjugacy classes of cocharacters $\bG_{m,\ov{E}} \to
G_{\ov{E}}$ given by the construction above. 
\end{defi}

\begin{rema}
It is clear from the construction that the Hodge type of $\rho : \Gamma_K
\to G(A)$ only depends on its reduction modulo $\m_A$, that is, the
representation $\Gamma_K \to G(F)$ coming from $\rho$.
\end{rema}

\subsubsection{Rewriting of the definition}

We want to give a definition of the Hodge type that is easier to
manipulate, using the fact that $G$ is reductive.

Let $T$ be a maximal torus of $G_{\ov{E}}$. Then a conjugacy class of
cocharacters $\bG_{m,\ov{E}} \to G_{\ov{E}}$ is the same thing as an
orbit under the Weyl group in the group $X_*(T)$ of cocharacters of $T$.
Let now $\hat{G}$ be the dual group of $G_{\ov{E}}$ (it is an algebraic
group over $\ov{E}$), and $\hat{T}$ a maximal torus of $\hat{G}$ so that
we identify canonically $X_*(T)$ and $X^*(\hat{T})$, in a way that is
compatible with the action of the Weyl group. Then finally, our conjugacy
class is the same as an element of the set $X^*(\hat{T})_+$  of dominant
characters of $\hat{T}$ (for some choice of a Borel subgroup of
$\hat{G}$).

\begin{defi}
Let $\rho : \Gamma_K \to G(E)$ be a potentially semi-stable
representation. Its Hodge type is the element of $X^*(\hat{T})_+^\e$
given by the construction above.
\end{defi}

\subsubsection{Functoriality}

Let $H$ be another reductive group over $E$.
Let $u$ be a homomorphism of algebraic groups from $G$ to $H$.
We can choose maximal tori $T_G$ of $G$ and and $T_H$ of $H$ so that
$u(T_G) \subset T_H$. Then we get a map $u_* : X_*(T_G) \to X_*(T_H)$. We
can then choose maximal tori $\hat{T}_G$ of $\hat{G}$  and $\hat{T}_H$ of
$\hat{H}$ so that $u_*$ becomes a map $X^*(\hat{T}_G) \to
X^*(\hat{T}_H)$, and we can choose ordering on the cocharacter groups so
that $u_*(X^*(\hat{T}_G)_+) \subset X^*(\hat{T}_H)_+$.

We now denote by $\rho_G$ a continuous representation from $\Gamma_K$ to
$G(A)$ and $\rho_H$  the composition :

$$
u^*\rho_G = u \circ \rho_G : \Gamma_K \rightarrow H(A).
$$

\begin{prop}
\label{Hodgefunctorial}
\begin{enumerate}
\item 
If $\rho_G$ is de Rham (resp. semi-stable, crystalline), then so is $\rho_H$.

\item 
Assume $\rho_G$ is de Rham and fix $\mu$ a cocharacter of $G_{\overline{E} \otimes_{\Qp} K}$ representing the Hodge type of  $\rho_G$.
Then, the $p$-adic Hodge type of $\rho_H$ is the $H(\overline{E}
\otimes_{\Qp} K)$-conjugacy class of the cocharacter  $u \circ \mu$  of
$H_{\overline{E} \otimes_{\Qp} K}$.

\item
Assume $\rho_G$ is de Rham and let $\lambda \in X^*(\hat{T}_G)_+^\e$ be the
Hodge type of $\rho_G$. Then the Hodge type of $\rho_H$ is $u_*\lambda
\in X^*(\hat{T}_H)_+^\e$.

\end{enumerate}
\end{prop}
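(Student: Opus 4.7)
The plan is to reduce everything to the Tannakian functoriality of Proposition \ref{functoriality} and to track how the Hodge filtration, its splitting, and the associated cocharacter behave under pullback along $u^* : \Rep_E H \to \Rep_E G$.

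For part (1), I would first note that by construction $\mathcal{R}_{A,\rho_H} = \mathcal{R}_{A,\rho_G} \circ u^*$ as functors $\Rep_E H \to \Rep_A \Gamma_K$: for $V \in \Rep_E H$, both sides send $V$ to $V \otimes_E A$ with $\Gamma_K$-action given by $\rho_H = u \circ \rho_G$ composed with the $H$-action on $V$. Since $\rho_G$ is de Rham, $\mathcal{R}_{A,\rho_G}$ factors through $\Rep^{\mathrm{dR}}_A \Gamma_K$, so $\mathcal{R}_{A,\rho_H}$ does too, and therefore $\rho_H$ is de Rham. The same argument, replacing ``dR'' by ``st'' or ``cr'', handles the semi-stable and crystalline cases.

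For part (2), I would set $\eta = \operatorname{D}_{\mathrm{dR},A,\rho_G}$, a fiber functor on $\Rep_E G$ with values in $\Mod_{A \otimes_{\Qp} K}$, and put $G' := G^\eta$. By the identity above, $\operatorname{D}_{\mathrm{dR},A,\rho_H} = \eta \circ u^*$, and Proposition \ref{functoriality} furnishes a map $u^\eta : G' \to H^{\eta \circ u^*} =: H'$. The Hodge filtration for $\rho_H$ satisfies $\Fil^i(\eta \circ u^*)(V) = (\Fil^i \eta)(u^* V)$ for every $V \in \Rep_E H$, directly from the definition of $\Fil^i$ as $\Gamma_K$-invariants in $\mathcal{R}_{A,\rho_G}(u^* V) \otimes_{\Qp} \Fil^i \BdR^+$. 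Consequently, if $(M^i \eta)_{i \in \Z}$ is a splitting of $(\Fil^i \eta)_{i \in \Z}$ associated to a cocharacter $\mu$ of $G'$, then $(M^i \eta \circ u^*)_{i \in \Z}$ is an exact tensor splitting of the Hodge filtration for $\rho_H$, and by construction it corresponds to the cocharacter $u^\eta \circ \mu$ of $H'$. Reducing modulo $\m_A$ and applying the inner-form identifications of $G'_{\ov E \otimes_{\Qp} K}$ with $G_{\ov E \otimes_{\Qp} K}$ and of $H'_{\ov E \otimes_{\Qp} K}$ with $H_{\ov E \otimes_{\Qp} K}$ would then yield that the Hodge type of $\rho_H$ is the $H(\ov E \otimes_{\Qp} K)$-conjugacy class of $u \circ \mu$.

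Part (3) is a purely combinatorial translation through the identifications $X_*(T_G)/W_G \cong X^*(\hat T_G)_+$ and the analogous one for $H$, which by the choice of maximal tori and orderings before the statement are compatible with $u_*$: applying part (2) to a cocharacter $\mu$ in the Weyl orbit corresponding to $\lambda \in X^*(\hat T_G)_+^{\mathcal{E}}$, the composition $u \circ \mu$ represents the orbit corresponding to $u_* \lambda \in X^*(\hat T_H)_+^{\mathcal{E}}$. The main difficulty I anticipate is keeping track, in part (2), of the compatibility between the cocharacter-to-inner-form identification and the functoriality map $u^\eta$: this is exactly the content of the commutative square in Proposition \ref{functoriality}, and the residual ambiguity (the trivialising isomorphism $\alpha$ being defined only up to $G(R')$-conjugation, as noted after Theorem \ref{funTan}) matches precisely the conjugation ambiguity built into the definition of the Hodge type, so the statement is well-posed.
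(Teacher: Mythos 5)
Your proposal is correct and follows essentially the same route as the paper: part (1) via the identity $\mathcal{R}_{A,\rho_H} = \mathcal{R}_{A,\rho_G} \circ u^*$ together with the characterisation of de Rham $G$-representations, and parts (2)–(3) by tracking the filtration, its splitting, and the associated cocharacter through Proposition \ref{functoriality}. The paper's own proof is just a terser version of this; your write-up fills in the details (pullback of the splitting, identification of the resulting cocharacter with $u^\eta\circ\mu$, compatibility of the inner-form trivialisations) that the paper leaves implicit.
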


\begin{proof}
The first assertion comes from the characterisation of de Rham
$A$-representations of $\Gamma_K$ recalled in~\ref{sssec:dRGrepGal} and
from the fact that the functor $\mathcal{R}_{A,\rho_H}$ is the
composition $\mathcal{R}_{A,\rho_G} \circ u^*$ of $\mathcal{R}_{A,\rho_G}
$ with the functor $u^*$ from the category $\Rep_E H$ of (finite
dimensional $E$-) representations of $H$ to representations of $G$,
$\Rep_E G$ :

$$
\begin{array}{r r c l}
f^* : & \Rep_E H & \rightarrow  & \Rep_E G \\
& (W,\pi_W) & \mapsto & (W,\pi_W \circ f_E).
\end{array}
$$
The other statements follow from the construction and from Proposition
\ref{functoriality}.
\end{proof}

\subsubsection{Examples: $\GL_n$ and $\PGL_n$}
\label{exHodgetype}

As $\GL_n$ is its own dual group, a
Hodge type is an element of $X^*(T_{\GL_n})_+^\e$.

We can identify $X^*(T_{\GL_n})$ with $\Z^n$, with the Weyl group acting
as the symmetric group. So we recover that a Hodge type for $\GL_n$ is
the familiar family indexed by $\e$ of unordered $n$-uples of integers,
and the Hodge type is regular (that is, for each $\iota \in \e$, all the
weights at $\iota$ are distinct) if and only if each character is
regular.

We can identify $X^*(T_{\GL_n})_+$ with 
the set of $\ell = (\ell_1,\dots,\ell_n)$ in $\Z^n$ such that 
$\ell_{1} \geq \dots \geq \ell_{n-1} \geq \ell_n$. Then the regular characters are
those for which $\ell_{1} > \dots > \ell_{n-1} > \ell_n$. 

Let now $\lambda$ be a Hodge type for $\PGL_n$. 
The dual group of $\PGL_n$ is $\SL_n$, so a Hodge type 
is an element of $X^*(T_{\SL_n})_+^\e$. 

The group $X^*(T_{\SL_n})$ is isomorphic to $\Z^n/\Z\ul{1}$ (where
$\ul{1} = (1,\dots,1)$), with the natural map $T_{\SL_n} \to T_{\GL_n}$
inducing the natural projection $\Z^n \to \Z^n/\Z\ul{1}$. 
The regular characters in $X^*(T_{\SL_n})$ are the images of the regular
characters in $X^*(T_{\GL_n})$, and we can choose an ordering on 
$X^*(T_{\SL_n})$ so that the dominant characters are exactly the 
images of the dominant characters.

\begin{defi}
\label{standardlift}
Let $\lambda \in X^*(T_{\SL_n})_+$ and let $\ell \in X^*(T_{\GL_n})$. We
say that $\ell$ is a lift
of $\lambda$ if the image of $\ell$ by the natural map $X^*(T_{\GL_n})
\to X^*(T_{\SL_n})$ is $\lambda$. This implies that $\ell \in 
X^*(T_{\GL_n})_+$. 
\end{defi}

The functoriality of the construction translates as
follows: let $r : \Gamma_K \to \GL_n(\ov\Q_p)$ a de Rham representation,
let $\rho : \Gamma_K \to \PGL_n(\ov\Q_p)$ coming from $r$ by the
projection $\GL_n(\ov\Q_p) \to \PGL_n(\ov\Q_p)$. Then if $\ell \in
X^*(T_{\GL_n})_+^\e$ is the Hodge type of $r$, then the Hodge
type of $\rho$ is the image of $\ell$ in $X^*(T_{\SL_n})_+^\e$ via the
natural map $X^*(T_{\GL_n})_+^\e \to X^*(T_{\SL_n})_+^\e$.

Consider for example the case where $n=2$. Then a Hodge type for $\GL_2$
is a family $(a_\iota,b_\iota)_{\iota\in \e}$ of pairs of integers with
$a_\iota \geq b_\iota$. We can write a Hodge type for $\PGL_2$ as a
family of non-negative integers $(c_\iota)_{\iota\in\e}$, and a de Rham
representation with values in $\GL_n(E)$ with Hodge type
$(a_\iota,b_\iota)_{\iota\in \e}$ gives rise to a de Rham representation
with values in $\PGL_n(E)$ with Hodge type
$(a_\iota-b_\iota)_{\iota\in \e}$.

\subsection{The inertial type of a de Rham $G$-representation of $\Gamma_K$}
\label{inertialtype}

We recall the definition of the inertial type of a potentially
semi-stable representation, after Balaji (\cite[Section 2.5]{Bal}), but in
a form more suited to our needs. We then prove a functoriality property.

Fix a local Artinian finite $E$-algebra $A$. Our goal is to define the inertial
type of a potentially semi-stable representation $\rho : \Gamma_K \to
G(A)$. Let $F$ be the residue field of $A$, so that $F$ is a finite
extension of $E$. We will define the inertial type of $\rho$ as the inertial
type of the representation $\Gamma_K \to G(F)$ given by $A \to F$. So we
forget about $A$ (This coincides with Balaji's definition of the inertial
type by \cite[Prop 2.5.4]{Bal}). We fix an embedding $F \to \ov\Q_p$.

\subsubsection{Inertial type of a potentially semi-stable representation}

Let $L$ be a fixed finite, Galois extension of $K$.  Let $\cc_L$ be the
category of continuous potentially semi-stable representations of $\Gamma_K$ on a
finite-dimensional $F$-vector space, that become semi-stable on $L$.

Let $\I$ (resp. $\W$) be the category of representations of $I_K$ (resp.
$W_K$) over an $\ov\Q_p$-vector space of finite dimension with open
kernel.

By \cite[Appendix B]{CDT}, we can attach to any object $r$ of $\cc_L$ an
isomorphism class of Weil--Deligne representations. By considering only
the Weil representation part,
we can attach to it, up to isomorphism, an object of $\W$.
Restricting further to $\I$, we get an object of $\I$, which
we call the inertial type of $r$.

Denote by $L_0$ the
maximal unramified extension of $\Q_p$ contained in $L$. 

In the rest of this paragraph, we explain why the construction of the
inertial type can be made functorial.

\begin{prop}
\label{weilfunc}
Let $i: L_0 \to \ov\Q_p$ be a field homomorphism.
There is a functor $\Ww_L: \cc_L \to \W$ which is exact and
compatible with duality and tensor products, such that $\Ww_L(r)$ is
isomorphic to the Weil representation attached to $r$ for any object $r$
of $\cc_L$.
Moreover, choosing another $i : L_0 \to \ov\Q_p$ gives
an isomorphic functor.
\end{prop}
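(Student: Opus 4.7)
The plan is to construct $\Ww_L$ as the composition of three functors: first a suitably enriched version of Balaji's $D_{\mathrm{st},L}$ applied to objects of $\cc_L$; then base change along the chosen embedding $i : L_0 \to \ov\Q_p$; and finally the classical Fontaine recipe turning a $(\phi,N,\Gal(L/K))$-module into a Weil (actually Weil--Deligne) representation. Since we only need the Weil piece, we forget $N$ at the end.

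First I would define $D_{\mathrm{st},L}(r) = (r \otimes_{\Qp} \operatorname{B}_{\mathrm{st}})^{\Gamma_L}$, which is functorial in $r$, lands in the category of finite free $L_0 \otimes_{\Qp} F$-modules equipped with a $\sigma$-semilinear Frobenius $\phi$, a nilpotent monodromy $N$, and a commuting semilinear action of $\Gal(L/K)$; the standard arguments (as recalled from \cite{Bal} and \cite{Fon94}) show that this is an exact faithful tensor functor that commutes with duality. Next, composing with $-\otimes_{L_0,i} \ov\Q_p$ produces a functor $M_i$ into finite $F \otimes_{\Qp} \ov\Q_p$-modules with a linear operator $\phi_i$, an operator $N$, and a $\Gal(L/K)$-action. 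Base change along $i$ is exact and compatible with tensor products and duality, so $M_i$ inherits these properties.

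Then I would define the $W_K$-action on $M_i(r)$ by the usual formula: for $w \in W_K$, write $\bar w \in \Gal(L/K)$ for its image and $\alpha(w) \in \Z$ for the power of the arithmetic Frobenius by which $w$ acts on the residue field of $\ov K$ (normalized so that a geometric Frobenius corresponds to $\alpha = -1$), and set
\[
\Ww_L(r)(w) \;=\; \phi_i^{\,-\alpha(w)} \circ \bar w.
\]
The commutation $\phi \bar w = \bar w \phi$ on $D_{\mathrm{st},L}(r)$ shows this is a group homomorphism; its kernel is open because $\Gal(L/K)$ is finite and the action of $I_K$ factors through $\Gal(L/K)$. Viewing the underlying space as a $\ov\Q_p$-vector space, we obtain the functor $\Ww_L : \cc_L \to \W$. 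Exactness, and compatibility with $\otimes$ and duality, are inherited in each of the three steps, once one checks that the $\phi_i^{-\alpha(w)}\bar w$ formula respects tensor products and duals in the obvious way (this is formal, since both $\phi$ and $\bar w$ do). That $\Ww_L(r)$ is, object by object, isomorphic to the classically attached Weil representation is \cite[Appendix B]{CDT}.

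The main obstacle is the final assertion: independence (up to isomorphism of functors) of the choice of $i$. Any other embedding has the form $i' = i \circ \tau^{-1}$ for some $\tau \in \Gal(L_0/\Qp)$, and the $\sigma$-semilinearity of $\phi$ gives a canonical isomorphism
\[
D_{\mathrm{st},L}(r) \otimes_{L_0,i'} \ov\Q_p \;\isom\; D_{\mathrm{st},L}(r) \otimes_{L_0,i} \ov\Q_p
\]
induced by the appropriate power of $\phi$ when $\tau$ is a power of $\sigma$ (and, in the general unramified case, by combining this with the induced action of $\Gal(L_0/\Qp)$ coming from $\Gal(L/K)$ through the natural surjection). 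One then checks that this isomorphism intertwines the Weil group actions defined by $i$ and $i'$: the conjugation by $\phi^k$ of $\phi^{-\alpha(w)}\bar w$ is again $\phi^{-\alpha(w)}\bar w$ because $\phi$ commutes with $\bar w$. This requires carefully tracking the semilinearity conventions, which is the only delicate point. Once that is done, one obtains a natural isomorphism $\Ww_L^{\,i'} \cong \Ww_L^{\,i}$ of functors, proving the last sentence of the proposition.
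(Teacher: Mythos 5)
Your construction is essentially the paper's: form $D_{st}^L$, endow it with the $W_K$-action $\ov{w}\circ\phi^{-\alpha(w)}$, base change to $\ov\Q_p$ along $i$, and compare two embeddings $i$ and $j=i\circ\Phi^m$ via the corresponding power of Frobenius. The one point to phrase carefully is that $\phi$ is $\sigma$-semilinear over $L_0$ and so does not itself descend to a linear operator $\phi_i$ on the fiber at $i$ --- only the combinations $\ov{w}\circ\phi^{-\alpha(w)}$ are $L_0$-linear --- which is why the paper defines the $L_0\otimes_{\Q_p}\ov\Q_p$-linear $W_K$-action on $D_{st}^L$ \emph{before} tensoring along $i$; since you only ever use these combinations, your argument goes through.
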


We sometimes write $\Ww_{L,i}$ instead of $\Ww_L$ if we want to
emphasize the role of the map $i: L_0 \to \ov\Q_p$.

\begin{proof}
Let $(r,V)$ be an objet of $\cc_L$.
Consider $D_{st}^L(V) =
(B_{st}\otimes_{\Q_p}(V\otimes_F\ov\Q_p))^{\Gamma_L}$. This is a free
$L_0\otimes_{\Q_p}\ov\Q_p$-module of rank $n = \dim_FV$. 
It is a filtered $(\phi,N)$-module, so
in particular endowed with a semi-linear map $\phi$, and has an action of
$\Gal(L/K)$ coming from the action of $\Gamma_K$.

We first define an $L_0\otimes_{\Q_p}\ov\Q_p$-linear action of $W_K$ on
$D_{st}^L(V)$. If $g \in \Gamma_K$, define $\alpha(g) \in \Z$ so that the
image of $g$ in $\Gal(\ov\F_p/\F_p)$ is $F_p^{\alpha(g)}$. Then $g \in W_K$
acts on $D_{st}^L(V)$ by $\ov{g}\circ \phi^{-\alpha(g)}$, where $\ov{g}$
is the image of $g$ in $\Gal(L/K)$. 

Then we define $\Ww_L(V)$ as follows: it is the representation of $W_K$
on $D_{st}^L(V)\otimes_{L_0\otimes_{\Q_p}\ov\Q_p}\ov\Q_p$, where the map
$L_0\otimes_{\Q_p}\ov\Q_p \to \ov\Q_p$ is given by $i$.

\medskip

Note that the restriction of this representation to $I_K$ (that is,
$\Ii_L(V)$) is easier to describe:
The inertia subgroup $I_K$ acts naturally on $D_{st}^L(V)$, in a way that is
$L_0\otimes_{\Q_p}\ov\Q_p$-linear, with $I_L$ acting trivially.
Then the $I_K$-representation is given by
$D_{st}^L(V)\otimes_{L_0\otimes_{\Q_p}\ov\Q_p}\ov\Q_p$, where the map
$L_0\otimes_{\Q_p}\ov\Q_p \to \ov\Q_p$ is given by $i$.

\medskip

Let now $j : L_0 \to \ov\Q_p$ be another map, and let us compare
$\Ww_{L,i}$ and $\Ww_{L,j}$. We can write $j = i \circ \Phi^m$ for some $m$,
where $\Phi$ is the Frobenius of $L_0$. Then $\Phi^m$ induces compatible
isomorphisms of $W_K$-representations
$D_{st}^L(V)\otimes_{L_0\otimes_{\Q_p}\ov\Q_p,i}\ov\Q_p
\to
D_{st}^L(V)\otimes_{L_0\otimes_{\Q_p}\ov\Q_p,j}\ov\Q_p$
for all $V$, and so gives an isomorphism between the functors
$\Ww_{L,i}$ and $\Ww_{L,j}$.
\end{proof}

\subsubsection{Definition of the inertial type of a potentially semi-stable $G$-representation}

\begin{defi}
\label{deftypeinGobj}
Let $\ul{\rho}$ be a $G$-object with values in $\cc_L$. Its inertial type
$\tau$ is the $G(\ov\Q_p)$-conjugacy class of representation $I_K \to
G(\ov\Q_p)$ with open kernel defined as follows: $\Ww_L \circ \ul{\rho}$ is a
$G$-object with values in $\W$. Then $\tau$ is the restriction to inertia
of the representation $W_K \to G(\ov\Q_p)$ attached to $\Ww_L\circ \ul{\rho}$
by Proposition \ref{Gobjtonaive}.
\end{defi}

Note that as observed in Remark \ref{remGobjtonaive}, the
representation $W_K \to G(\ov\Q_p)$ is only defined up to conjugacy by an
element of $G(\ov\Q_p)$.

\begin{defi}
\label{deftypein}
Let $\rho : \Gamma_K \to G(F)$ be a potentially semi-stable
representation. Its inertial type is the $G(\ov\Q_p)$-conjugacy class of
representation $I_K \to G(\ov\Q_p)$ with open kernel defined as follows:
let $L$ be a finite Galois extension of $K$ such that $\rho$ becomes
semi-stable on $L$.
Then $\rho$ defines a $G$-object $\ul{\rho}$ with values in $\cc_L$,
and $\tau$ is attached to $\ul{\rho}$ as in Definition
\ref{deftypeinGobj}.
\end{defi}

\begin{prop}
The definition above does not depend on the choice of $L$, or the
embedding $L_0 \to \ov\Q_p$ used to define the functor $\cc_L \to \W$.
\end{prop}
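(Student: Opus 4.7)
The plan is to check separately that the class is independent of the embedding $i: L_0 \to \ov\Q_p$ and of the choice of $L$, and then conclude via Proposition \ref{Gobjtonaive} applied to the associated $G$-object.

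For the independence of $i$, I would invoke Proposition \ref{weilfunc} directly: a different choice $j$ of embedding produces an isomorphism of tensor functors $\Ww_{L,i} \isom \Ww_{L,j}$, which then yields an isomorphism of $G$-objects $\Ww_{L,i} \circ \ul\rho \isom \Ww_{L,j} \circ \ul\rho$ in $\W$. By Proposition \ref{Gobjtonaive} (and Remark \ref{remGobjtonaive}), any two representations $W_K \to G(\ov\Q_p)$ extracted from isomorphic $G$-objects are conjugate by an element of $G(\ov\Q_p)$; restricting to $I_K$ preserves this, so the conjugacy classes agree.

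For the independence of $L$, let $L$ and $L'$ be two finite Galois extensions of $K$ such that $\rho$ becomes semi-stable over each. Their compositum $L'' = LL'$ is again finite and Galois over $K$, and $\rho$ is semi-stable over $L''$; thus it suffices to handle the case $L \subset L'$. In that case, $\cc_L$ is a full tensor subcategory of $\cc_{L'}$, and after choosing $i: L_0 \to \ov\Q_p$ together with an extension $i': L_0' \to \ov\Q_p$, I would construct a natural isomorphism of tensor functors $\Ww_{L,i} \isom \Ww_{L',i'}|_{\cc_L}$. The key fact is that for any $V$ in $\cc_L$ one has a canonical $W_K$-equivariant identification $D_{st}^{L'}(V) = D_{st}^L(V) \otimes_{L_0} L_0'$, compatible with tensor products and duals; tensoring down to $\ov\Q_p$ via the compatible embeddings $i,i'$ produces the desired isomorphism of $W_K$-representations, natural in $V$.

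Composing with $\ul\rho$ gives an isomorphism of $G$-objects $\Ww_L \circ \ul\rho \isom \Ww_{L'} \circ \ul\rho$ in $\W$, and once again Proposition \ref{Gobjtonaive} ensures that the two associated representations $W_K \to G(\ov\Q_p)$ are conjugate, hence define the same inertial type after restriction to $I_K$. The main obstacle is the verification that the isomorphism $D_{st}^{L'}(V) \cong D_{st}^L(V) \otimes_{L_0} L_0'$ is natural in $V$ and compatible with the tensor structure; this is essentially a standard base-change fact in $p$-adic Hodge theory, but care is needed to track the $W_K$-action defined in the proof of Proposition \ref{weilfunc} (in particular the twist by $\phi^{-\alpha(g)}$) across the two rings $L_0 \otimes_{\Q_p} \ov\Q_p$ and $L_0' \otimes_{\Q_p} \ov\Q_p$.
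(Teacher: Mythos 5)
Your proposal is correct and follows essentially the same route as the paper: independence of the embedding via Proposition \ref{weilfunc} composed with $\ul\rho$ and Proposition \ref{Gobjtonaive}, and independence of $L$ by reducing to $L \subset L'$ with compatible embeddings and identifying $\Ww_{L,i}$ with $\Ww_{L',i'}$ restricted along the natural functor $\cc_L \to \cc_{L'}$. The base-change identity $D_{st}^{L'}(V) \cong D_{st}^L(V)\otimes_{L_0}L'_0$ that you spell out is exactly the content of the paper's one-line check that $\Ww_{L',i'}\circ u = \Ww_{L,i}$.
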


\begin{proof}
Fix first $L$, and choose two different embeddings $i,j : L_0 \to
\ov\Q_p$. Then the functors $\Ww_{L,i}$ and $\Ww_{L,j}$ from $\cc_L$ to $\W$
are isomorphic by Proposition \ref{weilfunc}, 
and so the functors $\Ww_{L,i}\circ \ul{\rho}$ and
$\Ww_{L,j}\circ \ul{\rho}$ are also isomorphic, and so give rise by the
process of Proposition \ref{Gobjtonaive} to two representations
$W_K \to G(\ov\Q_p)$ that are conjugate by an element of $G(\ov\Q_p)$.

Now fix $L$ such that $\Gamma_K$ becomes semi-stable on $L$, and $L'$ be
an extension of $L$ which is Galois over $K$. We fix $i : L_0 \to
\ov\Q_p$, and $i' : L'_0 \to \ov\Q_p$ extending $i$. Let $u : \cc_L \to
\cc_{L'}$ be the natural functor, then we check immediately
that $\Ww_{L',i'} \circ u = \Ww_{L,i}$ which gives the result.
\end{proof}

Note that by construction, a representation $\tau : I_K \to G(\ov\Q_p)$
can appear as the inertial type of a representation only if it extends to
a representation $W_K \to G(\ov\Q_p)$. Hence the following definition:

\begin{defi}
\label{definertialabs}
An inertial type for $G$ is an isomorphism class of representation $I_K
\to G(\ov\Q_p)$ with open kernel that can be extended to a representation 
$W_K \to G(\ov\Q_p)$.
\end{defi}

\subsubsection{Functoriality}

Let now $H$ be another reductive group defined over $E$, and
let $u : G \to H$ be a morphism of algebraic groups over $E$.
From Proposition \ref{functoriality} we get the following:

\begin{prop}
\label{inertialfunctorial}

Let $\rho_G : \Gamma_K \to G(F)$ be a potentially semi-stable
representation. Let $\rho_H = u_F \circ \rho : \Gamma_K \to H(F)$, so that
$\rho_H$ is also a potentially semi-stable representation.

Let $\tau_G : I_K \to G(\ov\Q_p)$ and $\tau_H : I_K \to H(\ov\Q_p)$ be the
inertial types of $\rho_G$ and $\rho_H$ respectively. Then $\tau_H =
u_{\ov\Q_p}\circ \tau_G$, up to conjugation by an element of $H(\ov\Q_p)$.
\end{prop}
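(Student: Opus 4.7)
The plan is to reduce this to the functoriality statement of Proposition \ref{functoriality}, applied to the Weil functor $\Ww_L$ of Proposition \ref{weilfunc}.

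First I would choose $L$ a finite Galois extension of $K$ over which $\rho_G$ becomes semi-stable. By Proposition \ref{Hodgefunctorial}(1) applied to the restriction $\rho_{G|\Gamma_L}$, the composition $\rho_H = u_F \circ \rho_G$ also becomes semi-stable over $L$, so the same $L$ works to compute both inertial types. Fix also an embedding $i: L_0 \to \ov\Q_p$.

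Next I would unwind the Tannakian side. The naive-to-Tannakian construction identifies $\ul{\rho_H}$ with the composition $\ul{\rho_G} \circ u^*$ as $H$-objects with values in $\cc_L$, where $u^*: \Rep_E H \to \Rep_E G$ is the pullback along $u$. Composing with the tensor functor $\Ww_L : \cc_L \to \W$, and using the fact that $\Ww_L$ is exact and compatible with tensor products (and hence sends $G$-objects to $G$-objects in $\W$), we obtain
\[
\Ww_L \circ \ul{\rho_H} \;=\; \Ww_L \circ \ul{\rho_G} \circ u^*
\]
as $H$-objects with values in $\W$.

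Now by Definition \ref{deftypeinGobj}, the inertial type $\tau_G$ arises as the restriction to $I_K$ of the representation $r_G : W_K \to G(\ov\Q_p)$ attached by Proposition \ref{Gobjtonaive} (via the forgetful fiber functor $\omega_\W$) to the $G$-object $\Ww_L \circ \ul{\rho_G}$, and similarly for $\tau_H$. The key step is to identify the $H$-object coming from the naive representation $u_{\ov\Q_p} \circ r_G$. By the construction described in the paragraph preceding Proposition \ref{Gobjtonaive}, that $H$-object is precisely $(\omega_\W \circ \Ww_L \circ \ul{\rho_G}) \circ u^*$ equipped with its canonical $W_K$-action, which, by Proposition \ref{functoriality} (applied to the fiber functor $\nu = \omega_\W \circ \Ww_L \circ \ul{\rho_G}$ on $\Rep_E G$ and to the map $u$), coincides up to the $G(\ov\Q_p)$-ambiguity of the inverse construction with $\Ww_L \circ \ul{\rho_H}$. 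Hence $u_{\ov\Q_p} \circ r_G$ represents the same $H(\ov\Q_p)$-conjugacy class as $r_H$, and restricting to $I_K$ gives $\tau_H = u_{\ov\Q_p} \circ \tau_G$ up to $H(\ov\Q_p)$-conjugacy.

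The main obstacle I expect is bookkeeping of the conjugation ambiguity and of the faithfully flat base change hidden in Theorem \ref{funTan}: Proposition \ref{Gobjtonaive} only produces a representation into an inner form $G'$ of $G$, well-defined over $\ov\Q_p$ only up to conjugation, and we need to verify that the commutative square of Proposition \ref{functoriality} passes cleanly through this inner-form step for both $G$ and $H$ simultaneously, which is exactly why the conclusion is stated only up to $H(\ov\Q_p)$-conjugacy. Verifying independence of the auxiliary choice of $i : L_0 \to \ov\Q_p$ is then automatic from the last sentence of Proposition \ref{weilfunc}.
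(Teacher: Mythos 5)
Your proposal is correct and follows exactly the route the paper intends: the paper gives no written proof beyond the phrase ``From Proposition \ref{functoriality} we get the following,'' and your argument is a careful elaboration of that reduction (same $L$ for both groups, the identification $\ul{\rho_H}=\ul{\rho_G}\circ u^*$, composition with $\Ww_L$, and the Tannakian functoriality applied to the fiber functor $\omega_\W\circ\Ww_L\circ\ul{\rho_G}$, with the conjugation ambiguity accounting for the ``up to $H(\ov\Q_p)$-conjugacy'' in the statement). Nothing to correct.
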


\subsection{Application to potentially semi-stable deformation rings}

From now on,
$G$ is a smooth linear algebraic group defined over $\O_E$, such that
$G_E$ is reductive. The results of the previous sections then apply to
its generic fiber $G_E = G \times_{\O_E}E$.

Let $\Noe$ be the category of complete local Noetherian $\O_E$-algebras
with residue field $\F$ (i.e. such that the structural homomorphism $\O_E
\rightarrow A$ induces an isomorphism $\F \rightarrow A / \m_A$).
If $A$ is an object of $\Noe$, we denote by
$\m_A$ its maximal ideal and by $\xi_A$ the natural projection from $A$
to $A / \m_A$ ; by abuse of notation $\xi_A$ will also denote the
application from $A$ to $\F$.

\subsubsection{Universal deformation ring}
\label{univdefrings}

\begin{defi}
\label{deffunctdef}
We denote by $\Delta_G^\square({\ov{\rho}})$ the functor from $\Noe$ to the
category of sets which associates to every $A$ in $\Noe$ the sets of
continuous homomorphisms $\rho$ from $\Gamma_K$ to $G(A)$ such that
$ \xi_A \circ \rho = \ov{\rho} $.

\end{defi}

From \cite{Maz} for the case of $\GL_n$, and \cite{Til} and 
\cite[Theorem 1.2.2]{Bal} for the general case, we see that:

\begin{theo}
\label{defcadrrings}
The functor $\Delta^\square_G({\ov{\rho}})$ is 
representable by a complete local noetherian $\O_E$-algebra 
$R^\square_G(\ov{\rho})$.
\end{theo}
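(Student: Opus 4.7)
The plan is to reduce to Mazur's theorem for $\GL_N$ by realizing $\Delta^\square_G(\ov{\rho})$ as a closed subfunctor of a framed deformation functor for $\GL_N$, then invoking the elementary fact that closed subfunctors of representable functors on $\Noe$ are themselves representable by quotients.

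First I would choose a closed immersion $\iota : G \hookrightarrow \GL_{N,\O_E}$ given by a faithful algebraic representation; such an $\iota$ exists because $G$ is by hypothesis a smooth linear algebraic group over $\O_E$. The composite $\iota \circ \ov{\rho} : \Gamma_K \to \GL_N(\F)$ is then a residual representation to which Mazur's theorem applies, giving a universal framed deformation ring $R^\square_{\GL_N}(\iota \circ \ov{\rho})$ together with a universal lift $\rho^{\mathrm{univ}} : \Gamma_K \to \GL_N\!\bigl(R^\square_{\GL_N}(\iota \circ \ov{\rho})\bigr)$. Composition with $\iota$ defines a natural transformation $\Delta^\square_G(\ov{\rho}) \to \Delta^\square_{\GL_N}(\iota \circ \ov{\rho})$, and the goal is to show that this is a closed immersion of functors.

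To do so, I would use the fact that $\Gamma_K$ is topologically finitely generated (since $K$ is a $p$-adic local field) to fix topological generators $g_1,\dots,g_r$. The closed subscheme $G \subset \GL_{N,\O_E}$ is cut out by finitely many polynomial relations $P_1,\dots,P_s$ in the matrix coefficients and the inverse of the determinant. Let $J$ be the ideal of $R^\square_{\GL_N}(\iota \circ \ov{\rho})$ generated by the elements $P_j(\rho^{\mathrm{univ}}(g_i))$ for $1 \le i \le r$, $1 \le j \le s$. For every $A$ in $\Noe$, the map $R^\square_{\GL_N}(\iota \circ \ov{\rho}) \to A$ classifying a framed lift $\rho$ kills $J$ if and only if $\rho(g_i) \in G(A)$ for all $i$; because $G(A)$ is closed in $\GL_N(A)$, the subgroup $\rho(\Gamma_K)$, being the topological closure of the group generated by the $\rho(g_i)$, then automatically lands in $G(A)$. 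Hence $R^\square_G(\ov{\rho}) := R^\square_{\GL_N}(\iota \circ \ov{\rho})/J$ represents $\Delta^\square_G(\ov{\rho})$, and is automatically a complete Noetherian local $\O_E$-algebra with residue field $\F$.

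The main technical point, and the step that most deserves care, is the identification of $\Delta^\square_G(\ov{\rho})$ with the closed subfunctor cut out by $J$: one must justify that checking the $G$-valuedness on the finitely many topological generators is genuinely equivalent to checking it on all of $\Gamma_K$, which uses both the closedness of $G(A)$ in $\GL_N(A)$ and the continuity assumption built into the deformation functor. Independence of the construction from the choice of faithful embedding $\iota$ then follows formally from the universal property, and one recovers the statement from \cite{Bal} and \cite{Til} in the required generality.
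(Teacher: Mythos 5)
Your proof is correct. Note that the paper itself offers no argument for this statement: it simply cites Mazur for $\GL_n$ and Tilouine and Balaji for general $G$, so you have supplied a self-contained proof where the paper has only a reference. Your reduction to $\GL_N$ via a faithful representation, followed by cutting out the closed subfunctor with the defining equations of $G$ evaluated at topological generators, is the standard route and is essentially how the general case is established in the cited sources. The two points genuinely carrying the argument are the ones you flag: (i) topological finite generation of $\Gamma_K$ for a $p$-adic local field (a nontrivial but true theorem; alternatively one could run Mazur's finiteness condition $\Phi_p$ directly), and (ii) the fact that for $A \in \Noe$ with finite residue field $\F$, each $A/\m_A^n$ is finite, so $A$ is profinite, polynomial maps are continuous, and hence $G(A)$ is a closed subgroup of $\GL_N(A)$ carrying the subspace topology --- which is what makes "lands in $G(A)$ on generators" equivalent to "is a continuous homomorphism into $G(A)$". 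One small remark: the ideal $J$ a priori depends on the chosen generators $g_1,\dots,g_r$, but since all choices yield surjections from $R^\square_{\GL_N}(\iota\circ\ov\rho)$ representing the same subfunctor, Yoneda forces the kernels to coincide, so the construction is canonical; independence of the embedding $\iota$ then follows from the universal property as you say.
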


\subsubsection{Potentially semi-stable deformation rings}
\label{defpotrings}

If $F$ is a finite extension of $E$, the universal ring classifying
deformations to $\O_F$-algebras is
$R_G^\square(\ov\rho)\otimes_{O_E}\O_F$. We denote it by
$R_G^\square(\ov\rho)_F$.

We have the following result due to Balaji (\cite[Theorem 3.0.12]{Bal}),
which generalizes to reductive groups the results of \cite{Kis08}:

\begin{theo}
Let $\ov\rho : \Gamma_K \to G(\F)$ be a continuous Galois representation.
Let $\tau$ be an inertial type for $G$, and $v$ be a Hodge type for $G$.
Then there exist a finite extension $F$ of $E$, and 
a quotient $R(\tau,v,\ov\rho)'$ of $R_G^\square(\ov\rho)_F[1/p]$ satisfying
the following property: for any finite local $F$-algebra $B$, a map of
$F$-algebras $\zeta : R_G^\square(\ov\rho)_F[1/p] \to B$ factors through 
$R(\tau,v,\ov\rho)'$ if and only if the induced representation
$\rho_\zeta : \Gamma_K \to G(B)$ is potentially semi-stable with inertial
type $\tau$ and Hodge type $v$.
\end{theo}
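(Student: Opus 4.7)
The plan is to reduce to the case $G = \GL_N$, where this is Kisin's theorem from \cite{Kis08}, by means of a faithful algebraic representation, and then transport the result using the functoriality of Hodge and inertial types established in Propositions \ref{Hodgefunctorial} and \ref{inertialfunctorial}. First I would enlarge the coefficient field $E$ to a finite extension $F$ large enough that a cocharacter representing $v$ is defined over $F \otimes_{\Q_p} K$, that $\tau$ takes values in $G(F)$, and that a single finite Galois extension $L/K$ can serve to semi-stabilize every lift of interest.

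Next, since $G$ is smooth affine and $G_E$ is reductive, I fix a closed immersion $\iota : G \inj \GL_{N,\O_E}$. Composing $\iota$ with the universal framed deformation $\rho^{\mathrm{univ}} : \Gamma_K \to G(R_G^\square(\ov\rho)_F)$ yields a framed deformation of $\iota \circ \ov\rho$ valued in $\GL_N$, which by universality of the $\GL_N$-deformation ring gives a morphism
\[
\phi : R_{\GL_N}^\square(\iota\circ\ov\rho)_F \longrightarrow R_G^\square(\ov\rho)_F.
\]
Kisin's theorem supplies a quotient $S$ of $R_{\GL_N}^\square(\iota\circ\ov\rho)_F[1/p]$ classifying potentially semi-stable $\GL_N$-lifts of type $(\iota_*\tau,\iota_*v)$, where $\iota_*\tau$ and $\iota_*v$ are the inertial and Hodge types obtained from $\tau$ and $v$ via Propositions \ref{Hodgefunctorial} and \ref{inertialfunctorial}. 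I then define $R(\tau,v,\ov\rho)'$ to be the quotient of $R_G^\square(\ov\rho)_F[1/p]$ by the ideal generated by the image under $\phi$ of the kernel of $R_{\GL_N}^\square(\iota\circ\ov\rho)_F[1/p] \twoheadrightarrow S$.

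I would then verify the universal property on closed points $\zeta : R_G^\square(\ov\rho)_F[1/p] \to B$. The forward implication (a factorization through $R(\tau,v,\ov\rho)'$ gives a $(\tau,v)$-typed pst representation) is the delicate one: by construction $\iota\circ \rho_\zeta$ is pst of type $(\iota_*\tau,\iota_*v)$, so by the Tannakian characterization of pst via a faithful representation (Paragraph \ref{sssec:dRGrepGal}) the representation $\rho_\zeta$ itself is pst; if $(\tau',v')$ denotes its type, Propositions \ref{Hodgefunctorial} and \ref{inertialfunctorial} give $(\iota_*\tau',\iota_*v') = (\iota_*\tau,\iota_*v)$, and the issue is to conclude $\tau' = \tau$, $v' = v$. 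The reverse implication follows at once from the same two functoriality propositions.

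The main obstacle is precisely this injectivity of $\iota_*$ on the type data. For the Hodge type, choosing compatible maximal tori $T_G \subset T_{\GL_N}$ and compatible Borels, $\iota$ dualizes to a surjection $\hat T_{\GL_N} \twoheadrightarrow \hat T_G$, so that the induced map $X^*(\hat T_G)_+^\e \to X^*(\hat T_{\GL_N})_+^\e$ is injective and the equality $v' = v$ follows. For the inertial type the map of $\ov\Q_p$-conjugacy classes need not be injective, and to circumvent this I would observe that on the pst locus of fixed Hodge type the inertial type is a locally constant invariant (the set of open-kernel representations $I_K \to G(\ov\Q_p)$ modulo conjugation is discrete, and continuity in $\zeta$ forces the type to be constant on connected components). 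Consequently the preimage under $\phi$ of $S$ decomposes as a finite disjoint union of closed subspaces indexed by the $G$-inertial types lifting $\iota_*\tau$, and $R(\tau,v,\ov\rho)'$ is then the component corresponding to $\tau$; alternatively, one fixes the inertial type on $G$ by feeding several faithful representations into the Tannakian reconstruction of Proposition \ref{Gobjtonaive}.
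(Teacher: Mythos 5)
The paper does not actually prove this statement: it quotes it from Balaji's thesis (\cite[Theorem 3.0.12]{Bal}), which generalizes \cite{Kis08}. Your strategy --- push everything forward along a faithful representation $\iota : G \inj \GL_N$, invoke Kisin's theorem there, and pull back --- is indeed the strategy of that reference, so the outline is the right one. Two steps as written, however, do not hold up.

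First, your injectivity claim for Hodge types is false. A closed immersion does give an injection $X_*(T_G) \inj X_*(T_{\GL_N})$, but a Hodge type is a \emph{Weyl orbit} of cocharacters, and distinct $W_G$-orbits can fuse into a single $W_{\GL_N}$-orbit. For instance, for $\SL_3 \inj \GL_6$ via $\mathrm{std}\oplus\mathrm{std}^\vee$, the dominant cocharacters $(2,-1,-1)$ and $(1,1,-2)$ of $T_{\SL_3}$ are distinct (not $S_3$-conjugate) but both map to the $\GL_6$-conjugacy class $\{2,1,1,-1,-1,-2\}$. So $\iota_* v' = \iota_* v$ does not force $v'=v$, and the forward implication for the Hodge type needs the same ``locally constant discrete invariant'' argument that you reserve for the inertial type; it cannot be read off from the dual tori.

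Second --- and this is the essential gap --- that local-constancy argument is precisely the hard technical content of the construction, and you assert it rather than prove it. Saying that ``continuity in $\zeta$ forces the type to be constant on connected components'' presupposes that $\zeta \mapsto (\tau_{\rho_\zeta}, v_{\rho_\zeta})$ is continuous in a meaningful sense on the potentially semi-stable locus of the generic fiber; a priori the type is defined point by point via $\DdRA$ and $D_{st}^L$, and there is no family in sight over which continuity could be invoked. Making this precise requires constructing filtered $(\phi,N,\Gal(L/K))$-modules in families over the rigid analytic generic fiber of the deformation ring and proving that the Hodge and inertial types are locally constant there; this is \cite[\S 3]{Kis08} for $\GL_n$ and the bulk of Balaji's Chapter 3 in general. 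Granting that input, the rest of your argument is fine: the pullback of $S$ decomposes into finitely many clopen pieces indexed by the $G$-types lifting $(\iota_*\tau,\iota_*v)$, one defines $R(\tau,v,\ov\rho)'$ to be the piece indexed by $(\tau,v)$, and both directions of the universal property then follow from Propositions \ref{Hodgefunctorial} and \ref{inertialfunctorial} as you say.
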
 

Here $F$ is taken so that $\tau$ can be defined over $F$. In order to
simply notations, in the sequel we simply enlarge $E$ so that $\tau$ can
be defined over $E$.


We define $R(\tau,v,\ov\rho)$ as the unique quotient of
$R_G^\square(\ov\rho)$ such that $R(\tau,v,\ov\rho)[1/p]$ is
$R(\tau,v,\ov\rho)'$ and $R(\tau,v,\ov\rho)$ is $p$-torsion-free.  We
summarize the properties of $R(\tau,v,\ov\rho)$, coming from Balaji's
result and \cite[Theorem 3.3.3]{BG}:

\begin{prop}
\label{proprings}
The quotient $R(\tau,v,\ov\rho)$ of $R_G^\square(\ov\rho)$ satisfies the
following property:
for any finite local $E$-algebra $B$, a map of
$E$-algebras $\zeta : R_G^\square(\ov\rho)[1/p] \to B$ factors through 
$R(\tau,v,\ov\rho)[1/p]$ if and only if the induced representation
$\rho_\zeta : \Gamma_K \to G(B)$ is potentially semi-stable with inertial
type $\tau$ and Hodge type $v$.

Moreover, $R(\tau,v,\ov\rho)$ is $p$-torsion free, hence flat over $\Z_p$, and 
$\spec R(\tau,v,\ov\rho)[1/p]$ is reduced.
\end{prop}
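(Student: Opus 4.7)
The plan is to construct $R(\tau,v,\ov\rho)$ explicitly as the maximal $p$-torsion-free quotient of $R_G^\square(\ov\rho)$ whose generic fiber is Balaji's ring $R(\tau,v,\ov\rho)'$. Concretely, I would let $J \subset R_G^\square(\ov\rho)$ denote the kernel of the composite surjection $R_G^\square(\ov\rho) \to R_G^\square(\ov\rho)[1/p] \twoheadrightarrow R(\tau,v,\ov\rho)'$, and set $R(\tau,v,\ov\rho) := R_G^\square(\ov\rho)/J$. Since we have already enlarged $E$ so that $\tau$ is defined over $E$, no further base change is needed.

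The $p$-torsion-freeness is then automatic: if $x \in R(\tau,v,\ov\rho)$ satisfies $p^n x = 0$, any lift $\tilde x \in R_G^\square(\ov\rho)$ satisfies $p^n \tilde x \in J$, and because $R(\tau,v,\ov\rho)'$ is a $\Qp$-algebra this forces the image of $\tilde x$ in $R(\tau,v,\ov\rho)'$ to vanish, so $\tilde x \in J$ and $x = 0$. Inverting $p$ in the short exact sequence defining $R(\tau,v,\ov\rho)$ then identifies $R(\tau,v,\ov\rho)[1/p]$ with $R_G^\square(\ov\rho)[1/p]/J[1/p]$, which by definition of $J$ equals $R(\tau,v,\ov\rho)'$.

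The universal property then follows formally. Let $B$ be a finite local $E$-algebra and $\zeta : R_G^\square(\ov\rho)[1/p] \to B$; since $p$ is invertible in $B$, the map $\zeta$ factors through $R(\tau,v,\ov\rho)$ if and only if the corresponding map $R_G^\square(\ov\rho) \to B$ vanishes on $J$, if and only if $\zeta$ factors through the quotient $R(\tau,v,\ov\rho)[1/p] = R(\tau,v,\ov\rho)'$. Applying the universal property of $R(\tau,v,\ov\rho)'$ from Balaji's theorem yields the equivalence with $\rho_\zeta$ being potentially semi-stable of inertial type $\tau$ and Hodge type $v$.

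It remains to address the last two assertions. Flatness over $\Z_p$ for a Noetherian $\O_E$-algebra is equivalent to $p$-torsion-freeness, so the flatness statement is immediate. The reducedness of $\spec R(\tau,v,\ov\rho)[1/p]$ does not come out of this construction and is the only non-formal ingredient; I would deduce it from \cite[Theorem 3.3.3]{BG}, which extends Kisin's reducedness result from $\GL_n$ to arbitrary connected reductive groups. This is the step I would expect to be the main obstacle if one wished a self-contained treatment, since it ultimately relies on formal smoothness of the generic fiber at suitable closed points, which is what requires the full strength of the $p$-adic Hodge theory machinery in the reductive setting.
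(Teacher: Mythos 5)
Your construction of $R(\tau,v,\ov\rho)$ as the quotient by the kernel of $R_G^\square(\ov\rho) \to R(\tau,v,\ov\rho)'$ is exactly the quotient the paper has in mind (the paper simply asserts its existence and uniqueness and cites Balaji and \cite[Theorem 3.3.3]{BG} without writing out the formal verifications), and your arguments for $p$-torsion-freeness, the identification of the generic fiber, the universal property, and the appeal to \cite{BG} for reducedness are all correct. This is essentially the same approach as the paper, just with the routine details made explicit.
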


\subsubsection{Functorality for potentially semi-stable deformation
rings}
 
Let $H$ be another smooth reductive group defined over $\O_E$, and let $u
: G \to H$ be a homomorphism of algebraic groups over $\O_E$.

\begin{prop}
\label{functordefrings}
Let $\ov\rho : \Gamma_K \to G(\F)$ be a continuous Galois representation.
Then we have a natural morphism of functors $u_* : \Delta^\square_G(\ov\rho) \to
\Delta^\square_H(u_\F \circ \ov\rho)$, where for $A \in \Noe$ the map
$\Delta^\square_G(\ov\rho)(A) \to
\Delta^\square_H(u_\F \circ \ov\rho)(A)$ is given by $\rho \mapsto u_A
\circ \rho$.

This gives rise to a local morphism of $\O_E$-algebras $u^* : R_H^\square(u_\F
\circ \ov\rho) \to R_G^\square(\ov\rho)$.
\end{prop}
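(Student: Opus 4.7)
The plan is to verify that the naive composition $\rho \mapsto u_A \circ \rho$ defines a morphism of functors, and then invoke Yoneda applied to the representing rings from Theorem \ref{defcadrrings} to produce the ring map.

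First I would fix $A \in \Noe$ and explain why $u_A \colon G(A) \to H(A)$ is continuous for the natural topologies coming from the $\m_A$-adic topology on $A$. Since $G$ and $H$ are of finite type over $\O_E$, choose an affine chart: $u$ is locally given by polynomial maps in the coordinates, and such polynomial maps are $\m_A$-adically continuous. Hence for any continuous $\rho \in \Delta^\square_G(\ov\rho)(A)$, the composite $u_A \circ \rho \colon \Gamma_K \to H(A)$ is continuous. Because $u$ is a morphism of $\O_E$-schemes, it commutes with the reduction $\xi_A$, in the sense that $u_\F \circ \xi_{A,G} = \xi_{A,H} \circ u_A$, where $\xi_{A,G}$ and $\xi_{A,H}$ denote the reduction maps on $A$-points. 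Therefore
\[
\xi_A \circ (u_A \circ \rho) = u_\F \circ (\xi_A \circ \rho) = u_\F \circ \ov{\rho},
\]
so $u_A \circ \rho$ lies in $\Delta^\square_H(u_\F \circ \ov\rho)(A)$.

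Next I would check naturality in $A$: for a morphism $\phi \colon A \to A'$ in $\Noe$, the induced maps $G(\phi)$ and $H(\phi)$ satisfy $H(\phi) \circ u_A = u_{A'} \circ G(\phi)$, simply because $u$ is a natural transformation between the functors of points of $G$ and $H$ on the category of $\O_E$-algebras. Post-composing with $\rho$ gives the commutativity of
\[
\begin{CD}
\Delta^\square_G(\ov\rho)(A) @>u_*>> \Delta^\square_H(u_\F \circ \ov\rho)(A) \\
@VVV @VVV \\
\Delta^\square_G(\ov\rho)(A') @>u_*>> \Delta^\square_H(u_\F \circ \ov\rho)(A')
\end{CD}
\]
which is exactly what we need for $u_*$ to be a morphism of functors.

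Finally I would apply the Yoneda lemma: by Theorem \ref{defcadrrings}, both functors are representable by complete local Noetherian $\O_E$-algebras, and a morphism of representable functors $\Delta^\square_G(\ov\rho) \to \Delta^\square_H(u_\F \circ \ov\rho)$ corresponds uniquely to a morphism $u^* \colon R_H^\square(u_\F \circ \ov\rho) \to R_G^\square(\ov\rho)$ of $\O_E$-algebras going in the opposite direction. Concretely, $u^*$ is the classifying map for the deformation $u_{R_G^\square(\ov\rho)} \circ \rho^{\mathrm{univ}}_G$, where $\rho^{\mathrm{univ}}_G$ is the universal $G$-deformation of $\ov\rho$. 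There is no real obstacle here; the only point that is not purely formal is the continuity of $u_A$, which is handled by the finite-type hypothesis on $G$ and $H$ as above.
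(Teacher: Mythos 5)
Your proof is correct, and it fills in exactly the formal verification that the paper leaves implicit (the proposition is stated there without proof, as a purely formal consequence of the definitions and of Theorem \ref{defcadrrings}). The three points you isolate --- continuity of $u_A$ (which also follows from writing $G(A)=\lpro{n}G(A/\m_A^n)$ with each $G(A/\m_A^n)$ finite), compatibility with reduction modulo $\m_A$, and Yoneda applied to the representing rings --- are precisely what is needed, and your identification of $u^*$ as the classifying map of $u_{R_G^\square(\ov\rho)}\circ\rho^{\mathrm{univ}}_G$ is the standard description.
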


Let now $\tau : I_K \to G(\ov\Q_p)$ be an inertial type, and let $v$ be a
Hodge type for $G$. Then $u_*\tau : I_K \to H(\ov\Q_p)$ is an inertial
type for $H$, and $u_*v$ is a Hodge type for $H$ by Propositions
\ref{inertialfunctorial} and \ref{Hodgefunctorial}.

Then by the factorization property of Proposition \ref{proprings}, we
have the following result using standard arguments:

\begin{theo}
\label{functorpstdefrings}
The natural map $R_H^\square(u_\F \circ \ov\rho) \to
R_G^\square(\ov\rho) \to R(\tau,v,\ov\rho)$ 
factors through the quotient 
$R(u_*\tau,u_*v,u\circ \ov\rho)$ of $R_H^\square(u_\F \circ \ov\rho)$
and gives rise to a morphism of $\O_E$-algebras
$u^* : R(u_*\tau,u_*v,u\circ \ov\rho) \to R(\tau,v,\ov\rho)$.
\end{theo}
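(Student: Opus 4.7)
My plan is to obtain the required map as a composition coming from the functoriality of framed deformation rings, and then verify that it factors through the potentially semi-stable quotient by reducing to a check on finite local $E$-algebra specializations.

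The starting point is Proposition \ref{functordefrings}, which provides the composition
\[
\Phi \colon R_H^\square(u_\F \circ \ov\rho) \xrightarrow{u^*} R_G^\square(\ov\rho) \twoheadrightarrow R(\tau,v,\ov\rho).
\]
The task is to show that $\Phi$ kills the kernel of the surjection $R_H^\square(u_\F \circ \ov\rho) \twoheadrightarrow R(u_*\tau, u_*v, u\circ \ov\rho)$. Since $R(\tau,v,\ov\rho)$ is $p$-torsion free by Proposition \ref{proprings}, it embeds into its generic fiber, so it suffices to establish the factorization for $\Phi[1/p]$ at the level of $E$-algebras.

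The key step is a pointwise check. Given any $E$-algebra homomorphism $\zeta \colon R(\tau,v,\ov\rho)[1/p] \to B$ with $B$ a finite local $E$-algebra, Proposition \ref{proprings} identifies $\zeta$ with a potentially semi-stable $G$-representation $\rho_\zeta \colon \Gamma_K \to G(B)$ of inertial type $\tau$ and Hodge type $v$. The composition $\zeta \circ \Phi[1/p] \colon R_H^\square(u_\F \circ \ov\rho)[1/p] \to B$ corresponds to the $H$-representation $u_B \circ \rho_\zeta$, which by Propositions \ref{Hodgefunctorial} and \ref{inertialfunctorial} is potentially semi-stable with inertial type $u_*\tau$ and Hodge type $u_*v$. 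Applying Proposition \ref{proprings} now to $H$, the composition $\zeta \circ \Phi[1/p]$ factors through $R(u_*\tau, u_*v, u\circ \ov\rho)[1/p]$.

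To promote this pointwise statement to a factorization at the level of rings, I would invoke the fact that $R(\tau,v,\ov\rho)[1/p]$ is reduced (Proposition \ref{proprings}) together with the fact that its maximal ideals of finite $E$-codimension are Zariski dense, i.e.\ the standard Jacobson property of generic fibers of Kisin--Balaji potentially semi-stable deformation rings. Then the intersection of the kernels of all such $\zeta$ is zero, so the kernel of $R_H^\square(u_\F \circ \ov\rho)[1/p] \twoheadrightarrow R(u_*\tau, u_*v, u\circ \ov\rho)[1/p]$ is contained in $\ker(\Phi[1/p])$, yielding the desired factorization; it then descends to the integral level by $p$-torsion freeness, and defines the claimed map $u^* \colon R(u_*\tau,u_*v,u\circ \ov\rho) \to R(\tau,v,\ov\rho)$. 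The main obstacle is this final pointwise-to-global transfer, but the required Jacobson-density of finite-codimension closed points is by now a standard ingredient in the Kisin--Balaji framework.
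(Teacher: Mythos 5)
Your proposal is correct and is precisely the ``standard argument'' the paper invokes without writing out: the pointwise factorization via Proposition \ref{proprings} combined with functoriality of Hodge and inertial types (Propositions \ref{Hodgefunctorial} and \ref{inertialfunctorial}), then globalized using reducedness and the Jacobson property of the generic fiber, and descended integrally by $p$-torsion-freeness. No discrepancy with the paper's (unwritten) proof.
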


\section{Lifts from $\PGL_n$ to $\GL_n$}
\label{lifts}

\subsection{The lift/deformation lemma}

\begin{prop}
\label{liftdef}
Let $A$ be a complete local noetherian ring with maximal ideal $\m_A$
and finite residue field $\F$ of characteristic $p$.

Let $\Gamma$ be a profinite group. Let $\bar\rho : \Gamma \to \PGL_n(\F)$
be a continuous representation, and let $\rho : \Gamma \to \PGL_n(A)$ be
a continuous deformation of $\bar\rho$ (that is, $\rho$ modulo $\m_A$ is
$\bar\rho$).

Then $\bar\rho$ lifts to a continuous representation $\bar{r} : \Gamma
\to \GL_n(\F)$ if and only if $\rho$ lifts to a continuous representation
$r : \Gamma \to \GL_n(A)$.

Moreover, if we have a lift $\bar{r}$ of $\bar\rho$, then we can take the
lift $r$ of $\rho$ such that $r$ modulo $\m_A$ is $\bar{r}$.
\end{prop}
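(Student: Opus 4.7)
If $\rho$ lifts to some $r : \Gamma \to \GL_n(A)$, then $r \bmod \m_A$ is a continuous lift of $\bar\rho$ to $\GL_n(\F)$, giving one direction of the equivalence for free. For the converse direction together with the moreover statement, the plan is to construct $r$ inductively along the tower $A_k := A/\m_A^k$. Each $A_k$ is finite (since $A$ is complete local noetherian with finite residue field), so each $\GL_n(A_k)$ is a finite group and continuity of a homomorphism $\Gamma \to \GL_n(A_k)$ amounts to factoring through a finite quotient. I will produce $r_k : \Gamma \to \GL_n(A_k)$ lifting $\rho_k := \rho \bmod \m_A^k$, coherent with the projections, starting from $r_1 := \bar r$; setting $r := \varprojlim r_k \in \GL_n(A)$ then yields the desired lift, automatically satisfying $r \bmod \m_A = \bar r$.

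For the inductive step $r_k \leadsto r_{k+1}$, introduce the pullback $P_{k+1} := \GL_n(A_k) \times_{\PGL_n(A_k)} \PGL_n(A_{k+1})$; the pair $(r_k, \rho_{k+1})$ defines a homomorphism $\phi : \Gamma \to P_{k+1}$, and the desired $r_{k+1}$ are exactly the lifts of $\phi$ along $\GL_n(A_{k+1}) \twoheadrightarrow P_{k+1}$. The surjectivity (and the computation of the kernel) rests on the fact that $\GL_n(B) \to \PGL_n(B)$ is surjective for any local ring $B$. The kernel consists of the central scalars reducing to $1$ modulo $\m_A^k$, namely $1 + J_k$ with $J_k := \m_A^k/\m_A^{k+1}$; since $k \geq 1$ forces $J_k^2 = 0$ in $A_{k+1}$, the group $1 + J_k$ is abelian and canonically isomorphic to the additive group $J_k$, a finite $\F$-vector space. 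Lifting $\phi$ is thus obstructed by a class $\omega_k \in H^2(\Gamma, J_k)$ with trivial action.

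The crux is to show $\omega_k = 0$. Pick any set-theoretic lift $\tilde\phi : \Gamma \to \GL_n(A_{k+1})$ of $\phi$ and form the representing cocycle
\[
c(\gamma_1,\gamma_2) := \tilde\phi(\gamma_1)\tilde\phi(\gamma_2)\tilde\phi(\gamma_1\gamma_2)^{-1} \in 1 + J_k.
\]
Since $c(\gamma_1,\gamma_2) \in A_{k+1}^\times$ sits in $\GL_n(A_{k+1})$ as a scalar matrix, its determinant equals $c(\gamma_1,\gamma_2)^n$; on the other hand it also equals $(\partial d)(\gamma_1,\gamma_2)$, where $d := \det \circ \tilde\phi$. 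Hence $c^n$ is a coboundary and $n \cdot \omega_k = 0$. Since $J_k$ is an $\F$-vector space with $\F$ of characteristic $p$ and $\gcd(n,p) = 1$, multiplication by $n$ is an automorphism of $J_k$, hence of $H^2(\Gamma, J_k)$, which forces $\omega_k = 0$ and completes the inductive step. The main obstacle to anticipate is the determinant identity $c^n = \partial d$, which reflects that the composition $\mathbf{G}_m \hookrightarrow \GL_n \xrightarrow{\det} \mathbf{G}_m$ is the $n$-th power map; combined with the coprime-to-$p$ hypothesis, this is exactly what makes lifting from $\PGL_n$ to $\GL_n$ work.
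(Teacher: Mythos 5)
Your reduction to the inductive step is sound (finiteness of the $A_k$, the fibre product $P_{k+1}$, the identification of the kernel with the $\F$-vector space $J_k$, centrality), but the crucial assertion ``hence $c^n$ is a coboundary and $n\cdot\omega_k=0$'' does not follow from what you wrote. The identity $c^n=\partial d$ holds with $d=\det\circ\tilde\phi$ taking values in $A_{k+1}^\times$, whereas $\omega_k$ lives in $H^2(\Gamma,1+J_k)$; so you have only shown that $n\omega_k$ dies in $H^2(\Gamma,A_{k+1}^\times)$. Since $d\bmod\m_A^k=\det r_k$ is a homomorphism, the long exact sequence attached to $1\to 1+J_k\to A_{k+1}^\times\to A_k^\times\to 1$ shows that your computation actually gives $n\omega_k=\delta(\det r_k)$, the obstruction to lifting the character $\det r_k\colon\Gamma\to A_k^\times$ through $A_{k+1}^\times\to A_k^\times$ --- and this is nonzero in general. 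Concretely, take $p=3$, $n=2$, $A=\Z_3$, $\Gamma=\Z/9\Z$, $\bar\rho$, $\rho$ and $\bar r$ trivial, and run your induction choosing $r_2=r_3=(1+3)\cdot\id$ (legitimate lifts at each stage, since $1+3$ has order $3$ in $(\Z/9\Z)^\times$ and order $9$ in $(\Z/27\Z)^\times$). Then $(r_3,\rho_4)$ has no lift to $\GL_2(\Z/81\Z)$: a lift would be a scalar character $\mu\colon\Z/9\Z\to(\Z/81\Z)^\times$ with $\mu(1)\equiv 4\pmod{27}$, but every element of order dividing $9$ in $(\Z/81\Z)^\times$ is $\equiv 1\pmod 9$. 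So $\omega_3\neq 0$ for this chain: the stage-$k$ obstruction genuinely depends on the choice of $r_k$, and the inductive step as stated is false.

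The gap is repairable by normalizing determinants along the way: fix a character $\psi\colon\Gamma\to A^\times$ lifting $\det\bar r$ (compose $\det\bar r$ with the Teichm\"uller isomorphism $\F^\times\simeq\mu^{(p)}(A)$) and, after each lifting step, twist $r_{k+1}$ by the unique $n$-th root in $1+\m_A A_{k+1}$ of $(\det r_{k+1})/\psi$ (which exists and is unique because $p\nmid n$ and $1+\m_A A_{k+1}$ is a finite abelian $p$-group); this does not change the image of $r_{k+1}$ in $P_{k+1}$ and forces $\det r_k\equiv\psi$ at every stage, so $\det r_k$ visibly lifts, $n\omega_k=0$, and your coprimality argument applies. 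The paper packages this so that no induction and no cohomology vanishing are needed: it replaces $\GL_n(A)$ by the subgroup $Q_n(A)$ of matrices with determinant in $\mu^{(p)}(A)$, producing a central extension of $\PGL_n(A)$ by $\mu^{(p)}(A)\simeq\F^\times$ whose obstruction cocycle is matched, via compatible continuous sections, with the obstruction cocycle for $\GL_n(\F)\to\PGL_n(\F)$; the two obstruction classes are then literally equal under $\mu^{(p)}(A)\simeq\F^\times$, which gives both the equivalence and the compatibility of the lifts at once.
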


Before proving the Proposition, we need to state a few lemmas.

Let $\mu^{(p)}(A)$ be the subgroup of units of $A$ that are roots of
unity of order prime to $p$. Then the natural projection $\mu^{(p)}(A)
\to \F^\times$ is an isomorphism.

Let $Q_n(A)$ be the subgroup of elements of $\GL_n(A)$ with determinant
in $\mu^{(p)}(A)$.

\begin{lemm}
\label{exQ}
We have the following commutative diagrams with exact rows:
\[
\begin{CD}
1 @>>>  \mu^{(p)}(A) @>>> Q_n(A) @>>>  \PGL_n(A) @>>>  1 \\
@.       @VVV              @VVV         @VVV     @. \\
1 @>>> \F^\times @>>> \GL_n(\F) @>>> \PGL_n(\F) @>>>  1
\end{CD}
\]

where the first vertical map is an isomorphism and the other two are
surjective.
\end{lemm}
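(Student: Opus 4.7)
The bottom row is standard: over the field $\F$ the sequence $1 \to \F^\times \to \GL_n(\F) \to \PGL_n(\F) \to 1$ is exact because $\operatorname{Pic}(\F) = 0$, so $\GL_n(\F) \to \PGL_n(\F)$ is surjective with kernel the center $\F^\times$. For the top row, injectivity of $\mu^{(p)}(A) \to Q_n(A)$ (via scalar matrices) is immediate. The kernel of $Q_n(A) \to \PGL_n(A)$ is $A^\times \cap Q_n(A) = \{\lambda \in A^\times : \lambda^n \in \mu^{(p)}(A)\}$; since $\gcd(n,p) = 1$, if $\lambda^n$ has order $m$ prime to $p$ then $\lambda^{nm} = 1$ with $\gcd(nm,p)=1$, so $\lambda \in \mu^{(p)}(A)$. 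Hence the kernel is exactly $\mu^{(p)}(A)$.

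The main step is surjectivity of $Q_n(A) \to \PGL_n(A)$. First, any $\bar g \in \PGL_n(A)$ lifts to some $g \in \GL_n(A)$: the map $\GL_n \to \PGL_n$ is a $\bG_m$-torsor, so the obstruction lives in $H^1(A, \bG_m) = \operatorname{Pic}(A)$, which vanishes for a local ring. Next, one adjusts $g$ by a scalar so that the determinant lies in $\mu^{(p)}(A)$. Using the Teichmüller section (which exists by Hensel applied to $X^{|\F^\times|} - 1$, since $|\F^\times|$ is prime to $p$), decompose $A^\times = \mu^{(p)}(A) \times (1 + \m_A)$ and write $\det g = \alpha(1+x)$ with $\alpha \in \mu^{(p)}(A)$ and $x \in \m_A$. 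Since $n$ is a unit of $A$, Hensel's lemma applied to $f(X) = X^n - (1+x)^{-1}$ at the approximate root $X = 1$ (where $f(1) \in \m_A$ and $f'(1) = n \in A^\times$) yields $\lambda \in 1 + \m_A$ with $\lambda^n = (1+x)^{-1}$. Then $\det(\lambda g) = \lambda^n \det g = \alpha \in \mu^{(p)}(A)$, so $\lambda g \in Q_n(A)$ and still lifts $\bar g$.

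The vertical surjection $Q_n(A) \to \GL_n(\F)$ is proved by the same construction: given $\bar g \in \GL_n(\F)$, lift it entry-wise to $g \in \GL_n(A)$ (its determinant is a unit of $A$ since its reduction lies in $\F^\times$), then apply the determinant adjustment with $\lambda \in 1 + \m_A$, which leaves the reduction $\bar g$ unchanged. Commutativity of both squares follows directly from the definitions (reduction modulo $\m_A$ commutes with the inclusion of scalars and with $u$). Surjectivity of $\PGL_n(A) \to \PGL_n(\F)$ then follows by a diagram chase, since the composite $Q_n(A) \to \GL_n(\F) \to \PGL_n(\F)$ is surjective and factors through $\PGL_n(A)$.

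The main obstacle is the scalar adjustment step in the surjectivity of $Q_n(A) \to \PGL_n(A)$: the hypothesis $\gcd(n,p) = 1$ is essential here, both to make $n$ a unit of $A$ (so Hensel extracts $n$-th roots in $1 + \m_A$) and for the identification of the kernel. Everything else reduces to diagram chasing, the Teichmüller decomposition of $A^\times$, and the vanishing of $\operatorname{Pic}(A)$ for $A$ local.
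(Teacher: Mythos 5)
The paper states this lemma without proof, and your argument is a correct and complete verification of it along the standard lines one would expect: exactness of the rows via $\operatorname{Pic}=0$ for fields and local rings, identification of the kernel of $Q_n(A)\to\PGL_n(A)$ using that $n$ is prime to $p$, and the scalar-adjustment step via the Teichmüller decomposition $A^\times=\mu^{(p)}(A)\times(1+\m_A)$ together with Hensel's lemma for $n$-th roots in $1+\m_A$. Nothing is missing; in particular you correctly isolate where the hypothesis $\gcd(n,p)=1$ enters.
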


We recall the following well-known lemma:

\begin{lemm}
\label{lifth2}
Let
$$1 \to H \to G \xrightarrow{\pi} Q \to 1$$
be an exact sequence of topological groups, with $H$ central.
Let $\sigma$ be a continuous section of $\pi$ with $\sigma(1) = 1$. Let
$c \in Z^2(Q,H)$ defined by: $c(x,y) =
\sigma(x)\sigma(y)\sigma(xy)^{-1}$. The image of $c$ in $H^2(Q,H)$
does not depend on the choice of $\sigma$.

Let $\Gamma$ be a topological group, and $\rho : \Gamma \to Q$ be a
continuous homomorphism. Then $\rho$ lifts to a continuous homomorphism
$r : \Gamma \to G$ if and only if the image of $\rho_*c$ in $H^2(\Gamma,H)$
is zero.
\end{lemm}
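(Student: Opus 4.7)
The plan is to handle the lemma in three stages: (i) verify that $c$ is a well-defined 2-cocycle and that its cohomology class does not depend on $\sigma$; (ii) prove the forward implication by turning a lift $r$ into an explicit trivialization of $\rho_*c$; (iii) prove the converse by using a trivializing 1-cochain to correct $\sigma\circ\rho$ into a homomorphism.

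For (i), one first notes that $c(x,y) \in H$ because $\pi\circ c \equiv 1$, and the cocycle relation $c(x,y)c(xy,z) = c(x,yz)c(y,z)$ drops out of associativity of multiplication in $G$ after using centrality of $H$ to move $c$-values past $\sigma$-values. Independence of $\sigma$ is equally formal: a second continuous section $\sigma'$ with $\sigma'(1) = 1$ differs from $\sigma$ by a continuous $H$-valued 1-cochain $\phi(x) = \sigma(x)\sigma'(x)^{-1}$, and a short calculation identifies the ratio of the two cocycles with the coboundary $d\phi$.

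For (ii), given a continuous homomorphism $r : \Gamma \to G$ with $\pi\circ r = \rho$, set $\eta(g) = r(g)\sigma(\rho(g))^{-1}$. This is continuous and $H$-valued, and expanding $r(gh) = r(g)r(h)$ using centrality rewrites this exactly as the statement that $\rho_*c$ is the coboundary of $\eta$ (up to a sign convention). For (iii), if conversely $\rho_*c = d\eta$ for some continuous $\eta : \Gamma \to H$, set $r(g) = \eta(g)^{-1}\sigma(\rho(g))$; running the same computation backwards shows that $r$ is a continuous group homomorphism lifting $\rho$.

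The main subtlety, rather than a serious obstacle, is continuity throughout. The hypothesis that $\sigma$ is continuous ensures $c$ represents a class in continuous cohomology; the forward direction produces a continuous $\eta$ by construction; and the converse requires the vanishing of $[\rho_*c]$ to be interpreted in continuous cohomology, so that a continuous trivializing 1-cochain is available. With this convention fixed, the argument reduces to the classical obstruction-theoretic computation for central extensions.
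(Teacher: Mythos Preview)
Your proposal is correct and follows essentially the same approach as the paper. The paper's own proof is terser: it only addresses the lifting criterion (your steps (ii) and (iii)), writing $\sigma(\rho(x)) = \alpha(x)r(x)$ and checking $d\alpha = c\circ\rho$ in one direction, and setting $r(x) = \alpha(x)^{-1}\sigma(\rho(x))$ in the other; your $\eta$ is just $\alpha^{-1}$, which accounts for the sign you mention. Your explicit treatment of (i) and of the continuity bookkeeping simply fills in details the paper leaves implicit (the paper records ``Here all cycles are assumed to be continuous'' immediately after the lemma).
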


Here all cycles are assumed to be continuous.

\begin{proof}
Assume first that $\rho_*c$ is zero in $H^2(\Gamma,H)$. This means that
there exists a map $\alpha : \Gamma \to H$ such that $d\alpha = c \circ
\rho$. Set $r(x) = \alpha(x)^{-1}\sigma(\rho(x))$.

Assume now that $\rho$ lifts to $r$. Then there exists some continuous
function $\alpha : \Gamma \to H$ such that $\sigma(\rho(x)) =
\alpha(x)r(x)$ for all $x \in \Gamma$, and we see that $d\alpha = c\circ
\rho$.
\end{proof}

Let $(A,\m_A,\F)$ be as in the statement of Proposition \ref{liftdef}.

\begin{lemm}
\label{compatlift}
Let $\bar\sigma : \PGL_n(\F) \to \GL_n(\F)$ be a section. Then there exists
a continuous section $\sigma : \PGL_n(A) \to Q_n(A)$ which is
compatible to $\bar\sigma$, that is, $\sigma(x)$ modulo $\m_A$ is
$\bar\sigma(x\text{ modulo }\m_A)$ for any $x \in \PGL_n(A)$.
\end{lemm}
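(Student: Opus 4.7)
The plan is to construct $\sigma$ by combining a pointwise lift of $\bar\sigma$ with a canonical normalization trick based on the standing hypothesis $(n,p)=1$. The heart of the argument is the following claim: writing $U = \ker(\PGL_n(A) \to \PGL_n(\F))$ and $Q_n(A)^\circ = \ker(Q_n(A) \to Q_n(\F))$, the restriction to these kernels of the projection $\pi : Q_n(A) \to \PGL_n(A)$ is a homeomorphism $\pi^\circ : Q_n(A)^\circ \to U$.

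To prove this claim, I would use the decomposition $A^\times = \mu^{(p)}(A) \times (1+\m_A)$ (from Hensel's lemma applied to roots of unity of order prime to $p$) together with the bijectivity of $c \mapsto c^n$ on $1+\m_A$, which is precisely where $(n,p)=1$ intervenes. Given $x \in U$, first lift it to $y \in \GL_n(A)$ (possible since $\PGL_n(A) = \GL_n(A)/A^\times$ for $A$ local, as $\operatorname{Pic}(A)=0$); after multiplying by a Teichmüller scalar in $\mu^{(p)}(A)$ we may assume $y \equiv I \pmod{\m_A}$, so that $\det(y) \in 1 + \m_A$. Then there is a unique $c \in 1+\m_A$ with $c^n = \det(y)^{-1}$, and $cy \in Q_n(A)^\circ$ lifts $x$. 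Uniqueness of the lift follows from $\mu^{(p)}(A) \cap (1+\m_A) = \{1\}$ combined with the injectivity of the $n$-th power map on $1+\m_A$. For continuity of $(\pi^\circ)^{-1}$, I would pass to the Artinian quotients $A/\m_A^k$ where the corresponding groups are finite: $Q_n(A)^\circ$ and $U$ are then profinite (inverse limits of the finite level-$k$ versions), and a continuous bijection between compact Hausdorff spaces is automatically a homeomorphism.

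Granted the claim, and using the same normalization to see that $Q_n(A) \to Q_n(\F) = \GL_n(\F)$ is surjective, I would pick for each $\bar x \in \PGL_n(\F)$ some lift $s(\bar x) \in Q_n(A)$ of $\bar\sigma(\bar x)$. Then for any $x \in \PGL_n(A)$ with reduction $\bar x \in \PGL_n(\F)$, the element $x \cdot \pi(s(\bar x))^{-1}$ lies in $U$; letting $\ell(x) \in Q_n(A)^\circ$ be its unique preimage under $\pi^\circ$, I define $\sigma(x) = \ell(x) \cdot s(\bar x)$. By construction $\pi(\sigma(x)) = x$, $\sigma(x) \in Q_n(A)$, and $\sigma(x) \equiv \bar\sigma(\bar x) \pmod{\m_A}$. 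Continuity of $\sigma$ follows from the fact that $x \mapsto \bar x$ is locally constant (since $\PGL_n(\F)$ is discrete), so $s(\bar x)$ and $\pi(s(\bar x))$ are locally constant in $x$, combined with the continuity of $(\pi^\circ)^{-1}$.

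The main obstacle is the homeomorphism claim $\pi^\circ : Q_n(A)^\circ \xrightarrow{\sim} U$, specifically its injectivity, which is exactly what forces the hypothesis $(n,p)=1$ into play; once this is established, the rest of the argument is essentially bookkeeping.
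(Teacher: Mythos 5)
Your proof is correct and follows essentially the same route as the paper's: the paper also reduces to the kernel of reduction modulo $\m_A$ and uses the bijection $\SL_n(A)\cap(\id+\m_A \Mat_n(A)) \to \mathcal{K}$ (your $\pi^\circ : Q_n(A)^\circ \to U$, which is the same map since an element of $Q_n(A)$ reducing to the identity has determinant in $\mu^{(p)}(A)\cap(1+\m_A)=\{1\}$). You simply supply the details the paper leaves implicit, in particular the role of $(n,p)=1$ via the $n$-th power map on $1+\m_A$ and the continuity of the inverse.
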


\begin{proof}
It is enough to construct such a section on the subgroup $\mathcal{K}$ of
$\PGL_2(A)$ of matrices that reduce to the identity modulo $\m_A$. Note that
the map $\left(\SL_n(A) \cap (\id+\m_A M_n(A))\right) \to \mathcal{K}$ is bijective,
we take for $\sigma$ the inverse map.
\end{proof}

\begin{proof}[Proof of Proposition \ref{liftdef}]
Choose as in Lemma \ref{compatlift} two continuous sections
$\bar\sigma : \PGL_n(\F) \to \GL_n(\F)$
and
$\sigma : Q_n(A) \to \GL_n(A)$
that are compatible.
Recall from Lemma \ref{lifth2} the existence of elements $c_0 \in
Z^2(\PGL_n(A),\mu^{(p)}(A))$ and $c_p \in Z^2(\PGL_n(\F),\F^\times)$
constructed from these sections.

Let $\gamma_0 = \rho_*c_0 \in Z^2(\Gamma,\mu^{(p)}(A))$ and
$\gamma_p = \bar\rho_*c_p \in Z^2(\Gamma,\F^\times)$. Then by Lemma
\ref{compatlift}, they correspond to each other via the isomorphism
$\mu^{(p)}(A) \to \F^\times$.

This gives the equivalence of the liftability of $\bar\rho$ and $\rho$.

Now we need to show that for a given continuous lift $\bar{r}$ of
$\bar\rho$, there exists a continuous lift $r : \Gamma \to Q_n(A)$ of
$\rho$ which is compatible to $\bar{r}$.

By the proof of Lemma
\ref{lifth2}, the datum of $\bar{r}$ gives some continuous $\alpha_p : \Gamma \to
\F^\times$ such that $c_p = d\alpha_p$.  Define $\alpha_0 : \Gamma \to
\mu^{(p)}(A)$ via the isomorphism $\mu^{(p)}(A) \to \F^\times$, then $c_0
= d\alpha_0$, and using $\alpha_0$ we can define a lift $r$ of $\rho$
which is compatible to $\bar{r}$.
\end{proof}

\subsection{Lifts of Galois representations}

We recall the following theorem of Tate and its corollary (see
\cite[Paragraph 6]{Se77}), where $F$ is a local or a global field:

\begin{theo}
\label{Tate}
The group $H^2(\Gal(\ov{F}/F),\Q/\Z)$ is zero, and
in particular for any prime $\ell$ we have
$H^2(\Gal(\ov{F}/F),(\Q/\Z)[\ell^\infty]) = 0$.  
\end{theo}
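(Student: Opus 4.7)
The plan is to reduce the first assertion to a statement with $\Z$-coefficients via the short exact sequence
\[
0 \to \Z \to \Q \to \Q/\Z \to 0
\]
of Galois modules with trivial action. Since $\Q$ is a $\Q$-vector space, multiplication by $|H|$ acts simultaneously as an isomorphism and as zero on $H^n(H,\Q)$ for $n \geq 1$ and any finite group $H$; taking filtered colimits over the open normal subgroups of $\Gal(\ov F/F)$ yields $H^n(\Gal(\ov F/F),\Q) = 0$ for all $n \geq 1$. The long exact sequence then collapses to give an isomorphism $H^2(\Gal(\ov F/F),\Q/\Z) \isom H^3(\Gal(\ov F/F),\Z)$, so everything reduces to the vanishing of this $H^3$.

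To handle the $H^3$ I would split into cases. For $F$ a non-archimedean local field or a global function field of positive characteristic, the vanishing follows from the computation of the strict cohomological dimension, $\operatorname{scd}(\Gal(\ov F/F)) \leq 2$. For $F$ archimedean, the Galois group is either trivial or cyclic of order $2$ acting trivially on $\Z$, and in both cases $H^3(\Gal(\ov F/F),\Z) = 0$ by the $2$-periodicity of Tate cohomology of a cyclic group (which reduces $H^3$ to $H^1(\Z/2,\Z) = \Hom(\Z/2,\Z) = 0$). For $F$ a number field, the standard localization isomorphism
\[
H^n(\Gal(\ov F/F),\Z) \isom \bigoplus_{v \text{ real}} H^n(\Gal(\ov{F_v}/F_v),\Z), \qquad n \geq 3,
\]
reduces the question to the archimedean local case just treated.

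The second assertion is then formal: the primary decomposition $\Q/\Z = \bigoplus_\ell (\Q/\Z)[\ell^\infty]$ is preserved by Galois cohomology of a profinite group with discrete coefficients, so each $H^2(\Gal(\ov F/F),(\Q/\Z)[\ell^\infty])$ appears as a direct summand of the zero group.

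The main obstacle in this approach is not in the formal manipulation of long exact sequences but in the deep inputs on strict cohomological dimensions of absolute Galois groups of local and global fields, together with the reduction to archimedean places in the number-field case; these are exactly the ingredients from Tate's cohomological duality theory collected in Serre's exposition.
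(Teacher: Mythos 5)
The paper does not prove this statement: it is quoted from \cite[Paragraph 6]{Se77}, so the only available comparison is with Tate's argument as written up there, and your outline is essentially that argument. The formal part is correct: divisibility of $\Q$ kills $H^n(\Gal(\ov{F}/F),\Q)$ for $n\ge 1$, giving $H^2(\Gal(\ov{F}/F),\Q/\Z)\cong H^3(\Gal(\ov{F}/F),\Z)$, and the non-archimedean local and function-field cases then follow from $\operatorname{scd}\le 2$; the $\ell$-primary refinement is formal as you say. The one step you should source more carefully is the number-field case. The isomorphism $H^n(\Gal(\ov{F}/F),\Z)\cong\bigoplus_{v\ \mathrm{real}}H^n(\Gal(\ov{F_v}/F_v),\Z)$ for $n\ge 3$ is a true theorem, but at $n=3$ it is not a formal consequence of the usual localization theorem for \emph{finite} modules in degrees $\ge 3$: rewriting $H^3(\Gal(\ov{F}/F),\Z)$ as $H^2(\Gal(\ov{F}/F),\Q/\Z)$ lands you in degree $2$, where localization is only the tail of the Poitou--Tate sequence, not an isomorphism. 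In degree exactly $3$ this localization statement carries essentially the entire content of the theorem, and in Tate's original treatment the vanishing of $H^3(\Gal(\ov{F}/F),\Z)$ is instead obtained from class field theory, by showing that the inflation maps $H^3(\Gal(L/F),\Z)\to H^3(\Gal(L'/F),\Z)$ vanish for $L'$ sufficiently large. So either quote the degree-$3$ localization for $\Z$-coefficients explicitly from one of the standard references on global duality, or substitute the class-field-theoretic argument; with that reference pinned down, your proof is complete and is the same as the one in the source the paper cites.
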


\begin{coro}
\label{liftexists}
Any continuous representation of $\Gal(\ov{F}/F)$ taking its values
in $\PGL_n(\F)$ for some finite field $\F$ lifts to a continuous
representation with values in $\GL_n(\F')$ for some finite extension
$\F'$ of $\F$.
\end{coro}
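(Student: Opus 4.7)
The plan is to apply the obstruction machinery of Lemma \ref{lifth2} to the central extension
\[
1 \to \F^\times \to \GL_n(\F) \to \PGL_n(\F) \to 1,
\]
and to kill the resulting obstruction class by enlarging the coefficient field. Let $\Gamma_F = \Gal(\ov F/F)$ and fix any continuous set-theoretic section $\bar\sigma : \PGL_n(\F) \to \GL_n(\F)$ with $\bar\sigma(1)=1$ (all groups are finite and discrete, so continuity is automatic). Lemma \ref{lifth2} produces a $2$-cocycle $c \in Z^2(\PGL_n(\F),\F^\times)$, and $\bar\rho$ lifts to $\GL_n(\F)$ if and only if $\gamma := \bar\rho_*c \in H^2(\Gamma_F,\F^\times)$ vanishes. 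The same construction, applied over any finite extension $\F'/\F$ to $1 \to \F'^\times \to \GL_n(\F') \to \PGL_n(\F') \to 1$, shows that $\bar\rho$ (viewed with values in $\PGL_n(\F')$) lifts to $\GL_n(\F')$ if and only if the image of $\gamma$ in $H^2(\Gamma_F,\F'^\times)$ is zero.

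The key step is to show that $\gamma$ dies in $H^2(\Gamma_F, \ov\F^\times)$, where $\ov\F$ is an algebraic closure of $\F$. Since $\F$ has characteristic $p$, the group $\ov\F^\times$ has no $p$-torsion and is isomorphic, as an abstract abelian group with trivial $\Gamma_F$-action, to $\bigoplus_{\ell \ne p}(\Q/\Z)[\ell^\infty]$. Continuous cochain cohomology of a profinite group with discrete coefficient modules commutes with filtered colimits (hence with arbitrary direct sums of discrete modules), so
\[
H^2(\Gamma_F,\ov\F^\times) \cong \bigoplus_{\ell \ne p} H^2(\Gamma_F,(\Q/\Z)[\ell^\infty]) = 0
\]
by Theorem \ref{Tate}. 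Therefore the image of $\gamma$ in $H^2(\Gamma_F,\ov\F^\times)$ vanishes: there exists a continuous cochain $\alpha : \Gamma_F \to \ov\F^\times$ whose coboundary equals the image of a representative of $\gamma$.

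Finally, since $\ov\F^\times$ carries the discrete topology and $\Gamma_F$ is profinite, the image of $\alpha$ is compact and discrete, hence finite, and so is contained in $\F'^\times$ for some finite subextension $\F \subset \F' \subset \ov\F$. The image of $\gamma$ in $H^2(\Gamma_F,\F'^\times)$ is therefore zero, and by Lemma \ref{lifth2} the composition $\Gamma_F \xrightarrow{\bar\rho} \PGL_n(\F) \hookrightarrow \PGL_n(\F')$ lifts continuously to $\GL_n(\F')$, as required. The only delicate point is the passage from a coboundary with values in $\ov\F^\times$ to one with values in a finite subfield, which is handled by the profinite/continuity argument above.
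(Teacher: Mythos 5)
Your proof is correct and is essentially the standard argument that the paper is invoking (via the reference to Serre): the obstruction class from Lemma \ref{lifth2} lands in $H^2(\Gamma_F,\ov\F^\times)\cong\bigoplus_{\ell\neq p}H^2(\Gamma_F,(\Q/\Z)[\ell^\infty])$, which vanishes by Theorem \ref{Tate}, and compactness of $\Gamma_F$ forces the trivializing cochain to take values in a finite extension $\F'$. The passage through $\ov\F^\times$ and back down to $\F'^\times$ is handled carefully and correctly.
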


\subsection{Lifts of potentially semi-stable representations}

\subsubsection{Notation}
\label{not}

Let $\e$ be the set of embeddings $K \to \ov\Q_p$.
Fix a uniformizer $\varpi_K$ of $\O_K$.
Let $\chi_K : \Gamma \to \O_K^\times$ be the Lubin-Tate character
corresponding to $\varpi_K$ (the restriction of $\chi_K$ to
inertia does not depend on the choice of $\varpi_K$).
For $\iota\in \e$, let $\chi_\iota =
\iota\circ\chi_K$. Then $\chi_\iota$ is a crystalline character, with
all its Hodge-Tate weights equal to $0$ except the one at $\iota$ which
is $1$. Note that $\cyc$ and $\prod_{\iota\in\e}\chi_\iota$ differ by an
unramified character.

\subsubsection{Existence of potentially semi-stable lifts}
\label{existence}

\begin{prop}
\label{potstliftexists}
Let $E$ be a finite extension of $\Q_p$.
Let $\rho : \Gamma_K \to \PGL_n(\O_E)$ be a potentially semi-stable
representation.

Then there exist a finite extension $E'$ of
$E$ and a potentially semi-stable lift $r : \Gamma_K \to \GL_n(\O_{E'})$
of $\rho$.

Moreover, if $\rho$ is potentially crystalline then we can choose $r$ to
be also potentially crystalline.
\end{prop}

\begin{proof}
Write $V = \O_E^n$, so that $\rho$ is a map $\Gamma_K \to \PGL(V)$.
Let $r$ be any lift of $\rho$ to a continuous representation
$r : \Gamma_K \to \GL(V)$ (which exists after a finite extension of scalars
from $\O_E$ to some $\O_{E'}$ by Corollary \ref{liftexists} and
Proposition \ref{liftdef}).

Let $\sigma: \Gamma_K \to \GL(V^*\otimes V)$ be given by the adjoint
representation of $r$. Then $\sigma$ factors through $\rho$. This means
that $\sigma = r^* \otimes r$ is a potentially semi-stable
representation, as $\rho$ is, and the map $\PGL(V) \to \GL(V^*\otimes V)$
is algebraic. 

By \cite[Corollary 2.3.3]{DiM},
as $r \otimes r^*$ is de Rham, then there exists a twist of
$r$ by some character $\alpha$ that is de Rham. So $r\otimes\alpha$ gives us a lift
of $\rho$ that is de Rham.

Assume now that $\rho$ is actually potentially crystalline. Considering the
restriction of $\rho$ and $r\otimes\alpha$ to some finite extension of $K$,
and using \cite[Corollary 3.2.2]{DiM}, we see that $r\otimes\alpha$ is
also potentially crystalline.
\end{proof}

The functoriality result of Proposition \ref{inertialfunctorial}
translates in this situation as:

\begin{lemm}
\label{typequotient}
Let $\rho : \Gamma_K \to \PGL_n(E)$ be a potentially semi-stable
representation with Hodge type $\lambda$
and inertial type $\tau$.
Let $r : \Gamma_K \to \GL_n(E)$ be a potentially semi-stable
representation with Hodge type $\ell$ and
inertial type $t$, such that $u_*r = \rho$. Then $u_*t =
\tau$ and $\ell$ is a lift of $\lambda$.
\end{lemm}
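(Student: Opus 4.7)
The plan is to observe that this lemma is a direct consequence of the two functoriality statements already established for the Hodge type and the inertial type, applied to the homomorphism $u : \GL_n \to \PGL_n$.

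First, I would invoke Proposition \ref{Hodgefunctorial}(3), taking $G = \GL_n$ and $H = \PGL_n$ with the map $u$. Since $r$ is de Rham with Hodge type $\ell \in X^*(T_{\GL_n})_+^\e$, this proposition tells us that $u \circ r = \rho$ is de Rham and its Hodge type is $u_*\ell \in X^*(T_{\SL_n})_+^\e$. But by hypothesis the Hodge type of $\rho$ is $\lambda$, so $u_*\ell = \lambda$. By the explicit description of $u_*$ in Paragraph \ref{exHodgetype} as the natural projection $X^*(T_{\GL_n}) \to X^*(T_{\SL_n})$ (coordinatewise in $\e$), and by Definition \ref{standardlift}, this is exactly the condition that $\ell$ is a lift of $\lambda$.

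Next, I would invoke Proposition \ref{inertialfunctorial} with the same $G$, $H$ and $u$. Since $r$ is potentially semi-stable with inertial type $t : I_K \to \GL_n(\ov\Q_p)$, this gives that $u \circ r = \rho$ has inertial type $u_*t = u_{\ov\Q_p} \circ t$, up to conjugacy by an element of $\PGL_n(\ov\Q_p)$. Since the inertial type of $\rho$ is $\tau$ (also defined only up to such conjugacy), we conclude $u_*t = \tau$.

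The only minor point to check is that the hypothesis of Proposition \ref{inertialfunctorial} is satisfied: it is stated for representations to $G(F)$ where $F$ is the residue field of a local Artinian $E$-algebra $A$, but by the definition of the inertial type in Paragraph \ref{inertialtype}, the inertial type of $r : \Gamma_K \to \GL_n(E)$ is computed from the underlying representation with values in $E$ itself (viewed as the residue field of the local $E$-algebra $E$), so the framework applies directly. No serious obstacle is expected; the lemma is essentially a repackaging of functoriality for the particular map $u : \GL_n \to \PGL_n$.
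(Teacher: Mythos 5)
Your proposal is correct and follows exactly the route the paper intends: the paper states this lemma as an immediate translation of Propositions \ref{Hodgefunctorial} and \ref{inertialfunctorial} applied to $u:\GL_n\to\PGL_n$, together with the explicit description of $u_*$ on Hodge types from Paragraph \ref{exHodgetype}, and gives no further proof. Your remark verifying that the inertial-type functoriality applies with $A=F=E$ is a sound (and welcome) check of a detail the paper leaves implicit.
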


So we deduce:

\begin{coro}
\label{inertiallift}
Let $\rho$ be as above, and let $\tau$ be its inertial type. Then $\tau$
lifts to an inertial type $t$ for $\GL_n$.
\end{coro}

\subsubsection{Determinant conditions}
\label{detcond}

Let $\ell$ be a Hodge type for $\GL_n$. Then $\ell =
(\ell_\iota)_{\iota\in\e}$ with each $\ell_\iota$ of the form
$(\ell_{1,\iota}\dots,\ell_{n,\iota}) \in \Z^n$.
We define $\Sigma_\iota(\ell) \in \Z$ to be $\sum_{i=1}^n\ell_{i,\iota}$.

\begin{defi}
\label{deficompatgl}
Let $\ell$ be a Hodge type for $\GL_n$, $t$ an inertial type for $\GL_n$. 

Let $\psi :
\Gamma_K \to \ov\Q_p^\times$ be a character. We say that $\psi$ is compatible
to $\ell$ and $t$ if the restriction of $\psi$
to inertia is $\det(t)\prod\chi_\iota^{\Sigma_\iota(\ell)}$.

Let $\psi :
\Gamma_K \to \ov\F_p^\times$ be a character. We say that $\psi$ is compatible
to $\ell$ and $t$ if the restriction of $\psi$
to inertia is $\ov{\det(t)\prod\chi_\iota^{\Sigma_\iota(\ell)}}$.
\end{defi}

We see easily that if there exists a potentially semi-stable
representation $r : \Gamma_K \to \GL_n(\ov\Q_p)$ that has Hodge type $\ell$,
inertial type $t$, and determinant $\psi$, then $\psi$ is compatible to
$\ell$ and $t$.

We now want to describe a compatibility condition for representations
with values in $\PGL_n(\ov\Q_p)$.

Let $A$ be either $\ov\F_p$ or $\ov\Q_p$. We denote by by $\cc(A)$ the group of
continuous characters $\Gamma \to A^\times$, and by $\cc'(A)$ the quotient
$\cc(A)/\cc(A)^n$. Let $\rho : \Gamma_K \to \PGL_n(A)$ be a continuous
representation. Then we can attach to $\rho$ a well-defined element
$\det(\rho) \in \cc'(A)$. Indeed, fix any continuous lift $r : \Gamma_K \to
\GL_n(A)$ of $\rho$ (this exists thanks to Corollary \ref{liftexists} if
$A = \ov\F_p$, or a combination of Corollary \ref{liftexists} and
Proposition \ref{liftdef}
if $A = \ov\Q_p$), and take for $\det(\rho)$ the class of $\det(r)$.
Note that $\det(\rho)$ depends only on the restriction of $\rho$ to
inertia, as any unramified character is in $\cc(A)^n$.

If $\lambda$ is a Hodge type for $\PGL_n$ and $\iota\in \e$, we define
$\Sigma_\iota(\lambda) \in \Z/n\Z$ as follows: let $\ell$ be a lift of
$\lambda$, set $\Sigma_\iota(\lambda)$ to be the image of
$\Sigma_\iota(\ell)$ in $\Z/n\Z$.

\begin{defi}
\label{deficompatpgl}
Let $\lambda$ be a Hodge type for $\PGL_n$, $\tau$ an inertial type. 

Let $\psi :
\Gamma_K \to \ov\Q_p^\times$ be a character. We say that $\psi$ is compatible
to $\lambda$ and $\tau$ if $\psi$ has the same image in $\cc'(\ov\Q_p)$ as
$\det(\tau)\prod\chi_\iota^{\Sigma_\iota(\lambda)}$.

Let $\psi :
\Gamma_K \to \ov\F_p^\times$ be a character. We say that $\psi$ is compatible
to $\lambda$ and $\tau$ if $\psi$ has the same image in $\cc'(\ov\F_p)$ as
$\ov{\det(\tau)\prod\chi_\iota^{\Sigma_\iota(\lambda)}}$.
\end{defi}

Then we get from Proposition \ref{potstliftexists} and Corollary
\ref{inertiallift}:

\begin{lemm}
\label{liftdetpotst}
Let $\rho : \Gamma_K \to \PGL_n(\ov\Q_p)$ be potentially semi-stable with Hodge
type $\lambda$ and inertial type $\tau$. Then $\det(\rho)$ is compatible
to $\lambda$ and $\tau$.

Let $\ov\rho : \Gamma_K \to \PGL_n(\ov\F_p)$. If $\ov\rho$ has a deformation which is
potentially semi-stable with Hodge type $\lambda$
and inertial type $\tau$ 
then $\det(\ov\rho)$ is compatible to $\lambda$ and $\tau$.
\end{lemm}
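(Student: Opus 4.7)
The plan is to reduce both assertions to the analogous compatibility statement for $\GL_n$ (which is the elementary fact recorded right after Definition \ref{deficompatgl}) by lifting along the projection $u : \GL_n \to \PGL_n$ via Theorem \ref{liftpotst} and then invoking the functoriality of Hodge and inertial types established in Propositions \ref{Hodgefunctorial} and \ref{inertialfunctorial}.

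For the first assertion, I would apply Theorem \ref{liftpotst} to obtain a potentially semi-stable lift $r : \Gamma_K \to \GL_n(\ov\Q_p)$ of $\rho$, i.e.\ with $u \circ r = \rho$. Let $\ell$ and $t$ denote respectively the Hodge type and the inertial type of $r$. By Proposition \ref{Hodgefunctorial} the image of $\ell$ in $X^*(T_{\SL_n})_+^\e$ is $\lambda$, so $\ell$ is a lift of $\lambda$ in the sense of Definition \ref{standardlift}; similarly, Proposition \ref{inertialfunctorial} yields $u_* t = \tau$. The $\GL_n$-compatibility applied to $r$ then asserts
$$\det(r)|_{I_K} \;=\; \det(t)\prod_{\iota\in\e}\chi_\iota^{\Sigma_\iota(\ell)}.$$
Now by construction $\det(\rho)$ is the class of $\det(r)$ in $\cc'(\ov\Q_p)$; moreover $\Sigma_\iota(\lambda)$ is, by definition, the class of $\Sigma_\iota(\ell)$ modulo $n$, and $\det(\tau)$ is the class of $\det(t)$ modulo $n$-th powers (any two lifts of $\tau$ to $\GL_n$ differ by a character $\chi$, so their determinants differ by $\chi^n \in \cc(\ov\Q_p)^n$). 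Hence the identity above descends to the desired equality in $\cc'(\ov\Q_p)$, which is precisely the compatibility of Definition \ref{deficompatpgl}.

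For the second assertion, let $\rho$ be the given potentially semi-stable deformation of $\ov\rho$, to which the first part applies. Since the formation of $\det$ in both $\GL_n$ and $\PGL_n$ commutes with reduction modulo the maximal ideal, as do the reductions of the characters $\chi_\iota$ and of the inertial type $\tau$, reducing the compatibility relation for $\det(\rho)$ established in the first part gives the compatibility of $\det(\ov\rho)$ with $\lambda$ and $\tau$ in $\cc'(\ov\F_p)$. The main non-trivial input is Theorem \ref{liftpotst}; once this lifting result is in hand, the only mild obstacle is bookkeeping — keeping track of the quotient by $\cc(\cdot)^n$ that defines $\det(\rho)$ simultaneously with the reduction modulo $p$, and checking that the two quotient operations commute cleanly with the compatibility identity.
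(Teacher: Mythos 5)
Your argument is correct and is exactly the route the paper intends: the paper gives no separate proof of Lemma \ref{liftdetpotst}, stating only that it ``will be gotten from Theorem \ref{liftpotst}'', i.e.\ by producing a potentially semi-stable lift to $\GL_n$, applying the elementary compatibility remark after Definition \ref{deficompatgl} together with the functoriality of Hodge and inertial types, and descending modulo $\cc(\cdot)^n$ (and then reducing modulo $p$, where integrality of the $n$-th root makes the two quotients commute, as you note). No issues.
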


\subsubsection{Twists of inertial types}
\label{twists}

Let $\Gg_\Gamma$ be the finite group of continuous characters $\Gamma_K \to
\ov\Q_p^\times$ of order dividing $n$. Reduction modulo $p$
induces a bijection between $\Gg_\Gamma$ and the group of continuous
characters $\Gamma_K \to \ov\F_p^\times$ of order dividing $n$, as $p$ is
prime to $n$. 

Let $\Gg_I$ be the finite group of continuous characters $I_K \to
\ov\Q_p^\times$ of order dividing $n$. Then the
kernel of the natural restriction map $\Gg_\Gamma \to \Gg_I$ is of order
$n$, generated by $\unr(\zeta)$ for some primitive $n$-th root of unity
$\zeta$. The cardinality of $\Gg_I$ is $\gcd(n,q-1)$: indeed, any element
of $\Gg_I$ is trivial on the wild inertia subgroup, so we are counting
characters from a cyclic group of order $q-1$ to a cyclic group of order
$n$.

Let $\tau : I_K \to \PGL_n(\ov\Q_p)$ be an inertial type for $\PGL_n$.
Let $t$ be a lift of $\tau$ to an inertial type for $\GL_n$ (that exists
thanks to Corollary \ref{inertiallift}). We denote by
$\Gg_I(\tau)$ the group of characters $\alpha : I_K \to \ov\Q_p^\times$
such that $t$ is isomorphic to $t \otimes \alpha$.  This is a subgroup of
$\Gg_I$ and it depends only on $\tau$ and not on the choice of $t$.  Let
$\Gg_\Gamma(\tau)$ be the inverse image of $\Gg_I(\tau)$ in $\Gg_\Gamma$.

\subsubsection{Potentially semi-stable lifts}

The goal of this section is to prove the following result, which is a
refinement of Proposition \ref{potstliftexists}:

\begin{theo}
\label{liftpotst}
Let $E$ be a finite extension of $\Q_p$.
Let $\rho : \Gamma_K \to \PGL_n(\O_E)$ be a potentially semi-stable
representation with Hodge type $\lambda$ and inertial type $\tau$.  

Let $\ell$ be a lift of $\lambda$ and $t$ be a lift of $\tau$.
Let $\psi$ be a character that is compatible to
$\ell$ and $t$.

Then there exist a finite extension $E'$ of
$E$ and a potentially semi-stable lift $r : \Gamma_K \to \GL_n(\O_{E'})$ with
Hodge type $\ell$, inertial type $t$ and determinant $\psi$.  

Any lift of $\rho$ is a twist of $r$, and the set of all lifts satisfying
these properties is the set of $r \otimes \alpha$, $\alpha \in
\Gg_\Gamma(\tau)$.

The same result holds with "potentially semi-stable" replaced everywhere by
"potentially crystalline".
\end{theo}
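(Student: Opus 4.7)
My plan is to produce a potentially semi-stable lift $r$ of $\rho$ with the prescribed invariants $(\ell, t, \psi)$; the existence of $t$ lifting $\tau$ will fall out as a byproduct, since the inertial type of any potentially semi-stable lift of $\rho$ is automatically a lift of $\tau$ by functoriality (Lemma \ref{typequotient}).

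First, I would construct a continuous lift $r_0$ of $\rho$ with $\det r_0 = \psi$. Applying Corollary \ref{liftexists} to $\ov{\rho}$ yields some $\ov{r} : \Gamma_K \to \GL_n(\F')$. By Lemma \ref{liftdetpotst} and the compatibility of $\psi$ with $(\ell, t)$, the characters $\det \ov{r}$ and $\ov{\psi}$ have the same image in $\cc'(\ov{\F}_p)$, hence differ by an $n$-th power of a character of $\Gamma_K$; twisting $\ov{r}$ by such a character (which exists since $\gcd(n, p) = 1$) arranges $\det \ov{r} = \ov{\psi}$. Proposition \ref{liftdef} then lifts $\rho$ to a continuous $r_0 : \Gamma_K \to \GL_n(\O_{E'})$ with reduction $\ov{r}$. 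A further twist by a character into $1 + \m_{E'}$, a pro-$p$ group on which the $n$-th power map is bijective, achieves $\det r_0 = \psi$ exactly.

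Second, and this is the hard part, I would show that $r_0$ is potentially semi-stable with Hodge type $\ell$. The Sen weights $(w_{i,\iota})$ of $r_0$ project under $\GL_n \to \PGL_n$ to the Hodge--Tate weights of $\rho$, which by hypothesis are the integer classes $(\ell_{i,\iota})_i$ modulo $\Z\ul{1}$; so $(w_{i,\iota})_i = (\ell_{i,\iota})_i + s_\iota \ul{1}$ for a common scalar $s_\iota$ at each $\iota$. Their sum $\sum_i w_{i,\iota}$ equals the Hodge--Tate weight of $\det r_0 = \psi$, which by compatibility is $\Sigma_\iota(\ell)$; this forces $n s_\iota = 0$, hence $w_{i,\iota} = \ell_{i,\iota}$, so $r_0$ is Hodge--Tate with Hodge type $\ell$. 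To upgrade from Hodge--Tate to potentially semi-stable, fix $L/K$ Galois over which $\rho|_L$ is semi-stable and construct, via the Tannakian formalism, a semi-stable lift $s : \Gamma_L \to \GL_n$ of $\rho|_L$ with $\det s = \psi|_L$: the $\PGL_n$-object in filtered $(\phi, N)$-modules over $L_0$ attached to $\rho|_L$ extends to a $\GL_n$-object by specifying its central character to match the one attached to $\psi|_L$, and the compatibility of $\psi$ is exactly what makes this extension consistent with the ambient $\PGL_n$-structure. Then $r_0|_L$ and $s$ both lift $\rho|_L$ with the same determinant, so they differ by a character $\beta : \Gamma_L \to \ov{\Q}_p^\times$ satisfying $\beta^n = 1$; this $\beta$ has finite order prime to $p$ and hence is crystalline, so $r_0|_L = s \otimes \beta$ is semi-stable and $r_0$ is potentially semi-stable. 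The Tannakian extension step is the only non-formal input.

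Finally, I would adjust the inertial type and deduce uniqueness. The potentially semi-stable lift $r_0$ has some inertial type $t_0$ lifting $\tau$, and $t = t_0 \otimes \alpha$ for a character $\alpha : I_K \to \ov{\Q}_p^\times$; writing the compatibility equations for $\psi$ with $(\ell, t)$ and with $(\ell, t_0)$ (for the same $\ell$) forces $\alpha^n = 1$. Since $\alpha$ is of finite order prime to $p$, it factors through the tame quotient of $I_K$ and extends to a character $\eta : \Gamma_K \to \mu_n$ (the Frobenius value being free). The twist $r := r_0 \otimes \eta$ has inertial type $t$, Hodge type $\ell$ (since $\eta$ has zero Hodge--Tate weights), and determinant $\psi \cdot \eta^n = \psi$. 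For uniqueness: any two potentially semi-stable lifts with the same $(\ell, t, \psi)$ differ by a character $\gamma$ satisfying $\gamma^n = 1$ and $\gamma|_{I_K} \in \Gg_I(\tau)$, i.e., $\gamma \in \Gg_\Gamma(\tau)$; conversely, any such twist preserves all the invariants. The potentially crystalline version follows verbatim with ``crystalline'' in place of ``semi-stable'' throughout.
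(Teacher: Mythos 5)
Your overall architecture (construct some lift, then adjust Hodge type, inertial type and determinant by twisting, then classify all lifts via $\Gg_\Gamma(\tau)$) matches the paper's, and your endgame — the inertial-type adjustment via $\alpha^n=1$ and the uniqueness argument — is essentially the paper's argument. But the heart of the theorem is the existence of \emph{some} potentially semi-stable lift of $\rho$, and there your proof has a genuine gap. You reduce everything to constructing, ``via the Tannakian formalism,'' a semi-stable lift $s$ of $\rho|_{\Gamma_L}$ by extending the $\PGL_n$-object in filtered $(\phi,N)$-modules to a $\GL_n$-object with prescribed central character. You flag this as ``the only non-formal input'' but do not supply it, and it is precisely the hard content: (i) lifting a $\PGL_n$-object to a $\GL_n$-object in a Tannakian category is obstructed in general (it is a gerbe/$H^2$ problem, the analogue on the Galois side of which required Tate's theorem), and (ii) even granting a lift in the category of all filtered $(\phi,N)$-modules, you must verify that the lifted object is \emph{weakly admissible}, i.e.\ actually arises from a Galois representation; nothing in your sketch addresses this. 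The paper avoids both issues by taking an arbitrary continuous lift $r$ (which exists by Tate's vanishing theorem together with Proposition \ref{liftdef}), observing that $r\otimes r^*$ factors through $\rho$ and is therefore de Rham, and invoking Di Matteo's theorem on admissible tensor products (\cite[Corollary 2.3.3]{DiM}) to conclude that some character twist of $r$ is de Rham, hence potentially semi-stable. That theorem is exactly the non-formal input your argument is missing; without it (or an equivalent) your Sen-weight computation only shows that $r_0$ has the right Sen weights, not that it is de Rham.

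Two smaller points. First, your claim that a finite-order character of order prime to $p$ ``is crystalline'' is false — a ramified finite-order character is only potentially crystalline; this does not sink the potential semi-stability conclusion, but the sentence as written is wrong. Second, your deduction that $r_0$ is Hodge--Tate from the integrality of its Sen weights silently uses semisimplicity of the Sen operator of $r_0$; this does follow from the Hodge--Tate property of $\ad r_0$, but it needs to be said. Once the existence of a potentially semi-stable lift is granted (e.g.\ via Di Matteo), your determinant-based normalization of the Sen weights is a legitimate alternative to the paper's twisting by the Lubin--Tate characters $\chi_\iota$, and the rest of your proof goes through.
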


\begin{proof}
Fix $\rho$, $\lambda$, $\ell$, $\tau$, $t$ and $\psi$ as in the statement
of the theorem.

Fix a finite extension $E'$ of $E$, and a lift $r : \Gamma_K \to \GL_n(\O_{E'})$
of $\rho$ that is potentially semi-stable (and potentially crystalline if
$\rho$ itself is potentially crystalline), which exists thanks to
Proposition \ref{potstliftexists}.

The Hodge type of $r$ is a lift of $\lambda$, so it is of the form
$\ell+a$ where $a$ is the Hodge type
$(a_\iota,\dots,a_\iota)_{\iota\in\e}$
for some integers $a_\iota$. By twisting $r$ by suitable powers of the
characters $\chi_\iota$ defined in Paragraph \ref{not}, we can assume
that $r$ has Hodge type $\ell$.

Let $t'$ be the inertial type of $r$. Then $u_*t' = \tau$ by Lemma
\ref{typequotient}, so $t' = t \otimes \alpha$ for some potentially
unramified character $\alpha$. We can twist $r$ by $\alpha^{-1}$, and so
we can assume that $r$ has inertial type $t$.

We see now that $\det(r)$ and $\psi$ have the same restriction to
inertia. So we can twist $r$ by an unramified character (up to extending
$E'$) so that $\det(r) = \psi$.

This gives the existence part of the theorem. Let now $r'$ be another
lift with the same properties. Then $r' = r \otimes \alpha$ for some
character $\alpha$ with $\alpha^n = 1$, as $r$ and $r'$ have the same
determinant. Moreover, $\alpha$ is potentially crystalline with
all its Hodge-Tate weights $0$, so it is potentially unramified, and has open
kernel. So the inertial type of $r'$ is $t\otimes\alpha$. This means that
$t$ is isomorphic to $t\otimes\alpha$. So this means that $\alpha \in
\Gg_\Gamma(\tau)$. 

On the other hand, if $\alpha \in \Gg_\Gamma(\tau)$ and $r$ is a twist
with the properties of the theorem, then so is $r \otimes \alpha$.
\end{proof}

\section{Potentially semi-stable deformation rings for $\PGL_n$}
\label{potringspgln}

Our goal in this section is to compare the potentially semi-stable deformation
rings for $\GL_n$ and $\PGL_n$.

Let $\ov{r}$ be a continuous homomorphism from $\Gamma_K$ to $\GL_n(\F)$
and $\psi$ a continuous character from $\Gamma_K$ to $\O_E^\times$ such
that $\ov{\psi} = \det \ov{r}$ where $\ov{\psi} = \xi_E \circ \psi$. 

Let $\ov{\rho} = u_*\ov{r}$, so that it is a continuous homomorphism from
$\Gamma_K$ to $\PGL_n(\F)$.

\subsection{Universal deformation rings}

In order to lighten the notations, we write 
$\Delta^\square(\ov{\rho})$ for the functor that is denoted by 
$\Delta^\square_{\PGL_n}(\ov{\rho})$ in Definition \ref{deffunctdef}, and 
$D^\square(\ov{r})$ 
for the functor that is denoted by 
$\Delta^\square_{\GL_n}(\ov{\rho})$ in Definition \ref{deffunctdef}, as
from now on we only work with the two groups $\GL_n$ and $\PGL_n$.

We also denote by $D^{\square,\psi}({\ov{r}})$ the functor from $\Noe$ to the
category of sets which associates to every $A$ in $\Noe$ the sets of
continuous homomorphisms $r$ from $\Gamma_K$ to $\GL_n(A)$ such that
$\det r = \psi$ and $ \xi_A \circ r= \ov{r} $. We have a natural morphism
of functors $D^{\square,\psi}({\ov{r}}) \to D^\square(\ov{r})$ which is
given by forgetting the condition on the determinant.

\begin{theo}
\label{compdefcadr}
The morphism of functors $D^{\square,\psi}({\ov{r}}) \to
\Delta^\square({\ov{\rho}})$ given on $D^{\square,\psi}(\ov{r})(A)$ by $r \mapsto
u_A \circ r$, is an isomorphism of functors.
\end{theo}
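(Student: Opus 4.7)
The plan is to show that for each $A \in \Noe$, the map $r \mapsto u_A \circ r$ is a bijection between $D^{\square,\psi}(\ov{r})(A)$ and $\Delta^\square(\ov{\rho})(A)$. Both directions will rely on a single key observation: since $n$ is prime to $p$, $n$ is a unit in any $A \in \Noe$ (whose residue field has characteristic $p$), and so the binomial series defines a continuous group isomorphism $x \mapsto x^n$ from $1+\m_A$ to itself. In particular the pro-unipotent group $1+\m_A$ is uniquely $n$-divisible and has no nontrivial $n$-torsion.

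For injectivity, suppose $r_1, r_2 \in D^{\square,\psi}(\ov{r})(A)$ satisfy $u_A \circ r_1 = u_A \circ r_2$. Since the kernel of $u_A : \GL_n(A) \to \PGL_n(A)$ is the group $A^\times$ of scalar matrices, there is a continuous function $\alpha : \Gamma_K \to A^\times$ with $r_2 = \alpha \cdot r_1$. Because $r_1, r_2$ are homomorphisms, $\alpha$ is a continuous character. Since $r_1 \equiv r_2 \equiv \ov{r} \pmod{\m_A}$, the character $\alpha$ takes values in $1+\m_A$. Taking determinants gives $\psi = \det r_2 = \alpha^n \det r_1 = \alpha^n \psi$, so $\alpha^n = 1$. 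By unique $n$-divisibility of $1+\m_A$ we conclude $\alpha = 1$, hence $r_1 = r_2$.

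For surjectivity, let $\rho : \Gamma_K \to \PGL_n(A)$ be an element of $\Delta^\square(\ov\rho)(A)$. By Proposition \ref{liftdef} applied to the lift $\ov{r}$ of $\ov{\rho}$, there exists a continuous lift $r_0 : \Gamma_K \to \GL_n(A)$ of $\rho$ whose reduction modulo $\m_A$ is $\ov{r}$. The characters $\det r_0$ and $\psi$ both reduce to $\ov{\psi} = \det \ov{r}$, so $\beta := (\det r_0)\psi^{-1}$ is a continuous character $\Gamma_K \to 1+\m_A$. By unique $n$-divisibility, there exists a (unique) continuous character $\gamma : \Gamma_K \to 1+\m_A$ with $\gamma^n = \beta^{-1}$. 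Set $r := \gamma \cdot r_0$. Then $r$ reduces to $\ov{r}$ because $\gamma$ reduces to the trivial character, satisfies $u_A \circ r = u_A \circ r_0 = \rho$ because scalar twists lie in $\ker u_A$, and has $\det r = \gamma^n \det r_0 = \beta^{-1}(\det r_0) = \psi$. Hence $r \in D^{\square,\psi}(\ov{r})(A)$ maps to $\rho$.

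The only nontrivial ingredient is Proposition \ref{liftdef} (used to produce $r_0$ without worrying about the determinant); the rest is a determinant-correction argument whose main point is the unique $n$-divisibility of $1+\m_A$, which is where the hypothesis $\gcd(n,p)=1$ enters. Functoriality in $A$ is immediate from the construction, so we obtain an isomorphism of functors.
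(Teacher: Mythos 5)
Your proof is correct and follows essentially the same route as the paper's: surjectivity by lifting $\rho$ to some $r_0$ reducing to $\ov{r}$ (via Proposition \ref{liftdef}) and then twisting by the unique $n$-th root in $1+\m_A$ of $(\det r_0)/\psi$, and injectivity because two lifts with equal determinant differ by a character $\alpha$ with $\alpha^n=1$ and $\ov\alpha=1$, forced to be trivial. Your explicit remark that unique $n$-divisibility of the pro-$p$ group $1+\m_A$ is where $\gcd(n,p)=1$ enters is exactly the point the paper uses implicitly.
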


\begin{proof}
It is enough to show that for any $A$ the map
$D^{\square,\psi}({\ov{r}})(A) \to \Delta^\square({\ov{\rho}})(A)$ is
a bijection.

Let $\rho$  be in $\Delta^\square({\ov{\rho}})(A)$. Then there exists a
lift $r$ of $\rho$ such that $\xi_A(r) = \ov{r}$. Then $(\det r)/\psi$ is a
character taking its values in $1+\m_A$ so it has a unique $n$-th root $\phi$
taking its values in $1+\m_A$. Then $r\otimes\phi^{-1}$ is an element of
$D^{\square,\psi}({\ov{r}})(A)$ and is a lift of $\rho$.  Hence the
surjectivity.

Let now $r_1$ and $r_2$ be elements of $D^{\square,\psi}({\ov{r}})(A)$
that lift the same $\rho$. Then $r_1 = r_2\otimes\alpha$ for some
character $\alpha$ satisfying $\alpha^n = 1$ and $\ov\alpha = 1$, so
$\alpha = 1$. Hence the injectivity.
\end{proof}

We denote by $R^\square(\ov\rho)$ the ring representing the functor
$\Delta^\square(\ov\rho)$ 
(this corresponds to the ring
$R^\square_{\PGL_n}(\ov\rho)$ of Paragraph \ref{univdefrings}), 
and we
denote by
$\widetilde{R}^{\square}(\ov{r})$ the ring representing the functor 
$D^\square(\ov{r})$ (this corresponds to the ring
$R^\square_{\GL_n}(\ov{r})$ of Paragraph \ref{univdefrings}).
We also denote by $\widetilde{R}^{\square,\psi}(\ov{r})$ the quotient of 
$\widetilde{R}^{\square}(\ov{r})$ representing the functor
$D^{\square,\psi}({\ov{r}})$. So we get natural map
$R^\square(\ov{\rho}) 
\to
\widetilde{R}^{\square}(\ov{r})
\to
\widetilde{R}^{\square,\psi}(\ov{r})$.

\begin{theo}
\label{compdefcadrrings}
The ring homomorphism $R^\square(\ov{\rho}) \to
\widetilde{R}^{\square,\psi}(\ov{r})$, which corresponds to
the morphism of functors of Theorem \ref{compdefcadr},
is an isomorphism.
\end{theo}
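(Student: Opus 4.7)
The plan is to derive this from the already-established isomorphism of functors in Theorem \ref{compdefcadr} via Yoneda's lemma. Both functors under comparison are representable in the category $\Noe$: by definition $\Delta^\square(\ov{\rho})$ is represented by $R^\square(\ov{\rho})$, and $D^{\square,\psi}(\ov{r})$ is represented by the quotient $\widetilde{R}^{\square,\psi}(\ov{r})$ of $\widetilde{R}^\square(\ov{r})$ cut out by imposing that the determinant of the universal deformation equals $\psi$. Since Yoneda's lemma gives an antiequivalence between $\Noe$ and the full subcategory of representable functors on $\Noe$, the isomorphism of functors $D^{\square,\psi}(\ov{r}) \isom \Delta^\square(\ov{\rho})$ from Theorem \ref{compdefcadr} yields a unique isomorphism of $\O_E$-algebras $R^\square(\ov{\rho}) \isom \widetilde{R}^{\square,\psi}(\ov{r})$ corresponding to it.

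The remaining step is to check that this Yoneda isomorphism coincides with the ring homomorphism $R^\square(\ov{\rho}) \to \widetilde{R}^{\square,\psi}(\ov{r})$ described in the statement, namely the composition $R^\square(\ov{\rho}) \to \widetilde{R}^\square(\ov{r}) \to \widetilde{R}^{\square,\psi}(\ov{r})$. This is tautological from the construction: the map $R^\square(\ov{\rho}) \to \widetilde{R}^\square(\ov{r})$ classifies the deformation $u_{\widetilde{R}^\square(\ov{r})} \circ r^{\mathrm{univ}}$ of $\ov{\rho}$, and this deformation is precisely the image under the natural transformation $D^\square(\ov{r}) \to \Delta^\square(\ov{\rho})$ of the universal object $r^{\mathrm{univ}}$; composing further with the surjection to $\widetilde{R}^{\square,\psi}(\ov{r})$ matches up with Theorem \ref{compdefcadr}.

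There is essentially no obstacle here beyond this tracing of definitions, since the heavy lifting (surjectivity via the existence of a unique $n$-th root of $(\det r)/\psi$ in $1+\m_A$, and injectivity via the triviality of order-$n$ characters reducing to $1$) has already been carried out in the proof of Theorem \ref{compdefcadr}. The only minor point to be careful about is that representability of $D^{\square,\psi}(\ov{r})$ by a quotient of $\widetilde{R}^\square(\ov{r})$ holds because the condition ``$\det = \psi$'' is a closed condition on deformations: it is cut out by setting to zero the coefficients (in $\widetilde{R}^\square(\ov{r})$) of the difference $\det r^{\mathrm{univ}} - \psi$, viewed as a continuous function on $\Gamma_K$. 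One concludes that the map of the statement is an isomorphism.
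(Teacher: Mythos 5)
Your proposal is correct and is essentially the argument the paper intends (the paper states Theorem \ref{compdefcadrrings} as an immediate consequence of Theorem \ref{compdefcadr} without further proof): both functors are representable, so the isomorphism of functors transports to an isomorphism of the representing $\O_E$-algebras by Yoneda, and the resulting map is the stated composite by unwinding the universal objects. Your added remark that the determinant condition is a closed condition, so that $D^{\square,\psi}(\ov{r})$ is indeed representable by a quotient of $\widetilde{R}^{\square}(\ov{r})$, is a reasonable point to make explicit and does not change the substance.
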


\subsection{Comparison of potentially semi-stable deformation rings}

\subsubsection{Definition}

We fix $\ov\rho : \Gamma_K \to \PGL_n(\F)$ a continuous representation,
$\lambda$ a regular Hodge type for $\PGL_n$ (see Paragraph
\ref{exHodgetype})
and $\tau: I_K \to \PGL_n(\ov\Q_p)$ an inertial type.

We denote by $R^{\square}(\lambda,\tau,\ov\rho)$ the quotient of
$R^{\square}(\ov\rho)$ classifying the framed deformations of $\ov\rho$
that are potentially semi-stable with Hodge type $\lambda$ and
inertial type $\tau$, as in Paragraph \ref{defpotrings}.

Let $\ov{r} : \Gamma_K \to \GL_n(\F)$ be a lift of $\ov\rho$, $\psi$ a
character, 
$\ell$ a Hodge type for $\GL_n$, and
$t : I_K \to \GL_n(\ov\Q_p)$ an inertial type.
We denote by
$\tilde{R}^{\square,\psi}(\ell,t,\ov{r})$ the quotient of
$\tilde{R}^{\square,\psi}(\ov{r})$ classifying the framed deformations of
$\ov{r}$ that have determinant $\psi$, and are potentially semi-stable with
Hodge type $\ell$ and inertial type $t$.

Our goal is to understand the ring $R^{\square}(\lambda,\tau,\bar\rho)$ in terms
of rings of the form $\tilde{R}^{\square,\psi}(\ell,t,\ov{r})$.

From now on, 
we choose for $t$ a lift of  $\tau$,
and we take for 
$\ell$ a lift of $\lambda$.
We write $\ell$ as
$(\ell_\iota)_{\iota\in \e}$, with $\ell_\iota =
(\ell_{1,\iota},\dots,\ell_{n,\iota})$.

We assume
that $\det(\bar\rho)$ is compatible to $\lambda$ and $\tau$ (see Definition
\ref{deficompatpgl}).
Otherwise there is no framed deformation of
$\bar\rho$ that is potentially semi-stable with Hodge type $\lambda$
and inertial type $\tau$, and also no
framed deformation of a lift of $\bar\rho$ that is potentially
semi-stable with Hodge type lifting $\lambda$
and inertial type lifting $\tau$, so all the rings we want to consider
are zero.

We fix
$\psi : \Gamma_K \to \O_E^\times$ that is compatible to $\ell$ and $t$. 
Let $\ov{r_0}$ be a lift of $\ov\rho$ such that
$\det(\ov{r_0}) = \ov\psi$, which exists due to the compatibility
conditions.

\subsubsection{Maps between the deformation rings}

By Theorem
\ref{compdefcadrrings}, we have a natural isomorphism $c({\ov{r_0}}) :
R^{\square}(\ov\rho) \to \tilde{R}^{\square,\psi}(\ov{r_0})$.

We denote by $I(\ell,t,\ov{r_0})$ the kernel of the surjective map
$R^{\square}(\ov\rho) \to \tilde{R}^{\square,\psi}(\ov{r_0}) \to
\tilde{R}^{\square,\psi}(\ell,t,\ov{r_0})$, and $J(\lambda,\tau,\ov\rho)$
the kernel of the surjective map $R^{\square}(\ov\rho) \to
R^{\square}(\lambda,\tau,\ov\rho)$.

\begin{lemm}
\label{factorsthrough}
The map $R^{\square}(\ov\rho) \to \tilde{R}^{\square,\psi}(\ell,t,\ov{r_0})$
factors through $R^{\square}(\lambda,\tau,\ov\rho)$, that is, $J(\lambda,\tau,\ov\rho)
\subset I(\ell,t,\ov{r_0})$.
\end{lemm}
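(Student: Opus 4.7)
The plan is to reduce the question to a statement on generic fibres and then apply the universal property of $R^\square(\lambda,\tau,\ov\rho)$ from Proposition \ref{proprings}, using the functoriality of Hodge and inertial types under $u : \GL_n \to \PGL_n$.

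First I would use the fact that $\tilde R^{\square,\psi}(\ell,t,\ov{r_0})$ is $p$-torsion-free (by Proposition \ref{proprings}, or more precisely its analogue for $\GL_n$ together with the natural surjection from $\tilde R^{\square,\psi}(\ov{r_0})$). Consequently the map $f : R^\square(\ov\rho) \to \tilde R^{\square,\psi}(\ell,t,\ov{r_0})$ is injective after inverting $p$ on the target, and so $f$ kills $J(\lambda,\tau,\ov\rho)$ if and only if $f[1/p]$ factors through $R^\square(\lambda,\tau,\ov\rho)[1/p]$. Moreover $R^\square(\lambda,\tau,\ov\rho)[1/p]$ is a Jacobson ring whose closed points have values in finite local $E$-algebras, so the factorization can be tested after further composition with any map $\zeta : \tilde R^{\square,\psi}(\ell,t,\ov{r_0})[1/p] \to B$ to a finite local $E$-algebra $B$.

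Fix such a $\zeta$, and let $r_\zeta : \Gamma_K \to \GL_n(B)$ be the associated deformation of $\ov{r_0}$. By construction $r_\zeta$ has determinant $\psi$ and is potentially semi-stable with Hodge type $\ell$ and inertial type $t$. Under the isomorphism $c(\ov{r_0}) : R^\square(\ov\rho) \isom \tilde R^{\square,\psi}(\ov{r_0})$ of Theorem \ref{compdefcadrrings}, the composite map $R^\square(\ov\rho) \to \tilde R^{\square,\psi}(\ell,t,\ov{r_0})[1/p] \to B$ corresponds to the deformation $\rho_\zeta := u_B \circ r_\zeta$ of $\ov\rho$. By Proposition \ref{Hodgefunctorial} applied to $u$, the representation $\rho_\zeta$ is potentially semi-stable with Hodge type $u_*\ell = \lambda$; by Proposition \ref{inertialfunctorial}, its inertial type is $u_*t = \tau$.

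Therefore, by the universal property of $R^\square(\lambda,\tau,\ov\rho)$ given in Proposition \ref{proprings}, the composite $R^\square(\ov\rho)[1/p] \to B$ factors through $R^\square(\lambda,\tau,\ov\rho)[1/p]$. Since this holds for all $\zeta$ as above, and since $\tilde R^{\square,\psi}(\ell,t,\ov{r_0})[1/p]$ is reduced (again Proposition \ref{proprings}) with enough such $B$-valued points to separate ideals, we conclude that $f[1/p]$ factors through $R^\square(\lambda,\tau,\ov\rho)[1/p]$, and hence that $J(\lambda,\tau,\ov\rho) \subset I(\ell,t,\ov{r_0})$. The only subtlety, and the main point to keep in mind, is the correct bookkeeping in identifying the image of $r_\zeta$ under $u_B$ with the representation attached to $f \circ \zeta$; this is precisely the content of the isomorphism of functors in Theorem \ref{compdefcadr}.
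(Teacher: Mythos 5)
Your proof is correct and is essentially the paper's argument with Theorem \ref{functorpstdefrings} unwound: the paper factors the map through $\tilde{R}^{\square}(\ell,t,\ov{r_0})$ and cites that theorem, which is itself proved by exactly the closed-point/Jacobson/reducedness argument you spell out, combined with Propositions \ref{Hodgefunctorial} and \ref{inertialfunctorial}. The only difference is that you work directly with the fixed-determinant ring rather than passing through the non-fixed-determinant potentially semi-stable quotient, which changes nothing of substance.
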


We denote by $c(\ell,t,\ov{r_0})$ the map
$R^{\square}(\lambda,\tau,\ov\rho) \to 
\tilde{R}^{\square,\psi}(\ell,t,\ov{r_0})$ given by this lemma.

\begin{proof}
The map $R^{\square}(\ov\rho) \to \tilde{R}^{\square,\psi}(\ov{r_0})$
factors through $\tilde{R}^{\square}(\ov{r_0})$, and the map
$\tilde{R}^{\square}(\ov{r_0}) \to \tilde{R}^{\square,\psi}(\ell,t,\ov{r_0})$ 
factors through $\tilde{R}^{\square}(\ell,t,\ov{r_0})$ by construction
of the potentially semi-stable deformation rings.
Then the map $R^{\square}(\ov\rho) \to \tilde{R}^{\square}(\ell,t,\ov{r_0})$
factors through $R^{\square}(\lambda,\tau,\ov\rho)$
by Theorem \ref{functorpstdefrings}.
\end{proof}

\subsubsection{Comparison of the deformation rings}
\label{defrings}

Let $\Qq(\tau) = \Gg_\Gamma/\Gg_\Gamma(\tau)$. Then for all $\alpha\in
\Qq(\tau)$ and any inertial type $t$ lifting $\tau$, the isomorphism
class of $t \otimes \alpha$ is well-defined.

We study first what happens after inverting $p$.

\begin{theo}
\label{compdefrings0}
As ideals of $R^\square(\ov\rho)[1/p]$, we have
$$
J(\lambda,\tau,\ov\rho)[1/p] 
= \prod_{\alpha\in \Qq(\tau)}
I(\ell,t\otimes\alpha,\ov{r_0})[1/p] 
$$
and the ideals $ I(\ell,t\otimes\alpha,\ov{r_0})[1/p]$ are pairwise
coprime.

As a consequence,
$$
R^{\square}(\lambda,\tau,\ov\rho)[1/p] =
\prod_{\alpha\in \Qq(\tau)}
\tilde{R}^{\square,\psi}(\ell,t\otimes\alpha,\ov{r_0})[1/p]
$$
via the map $\prod_{\alpha\in\Qq(\tau)}c(\ell,t\otimes\alpha,\ov{r_0})$.
\end{theo}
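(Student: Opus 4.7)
The plan is to work at the level of $\ov\Q_p$-points, exploiting that all the quotient rings in sight are reduced after inverting $p$ (Proposition \ref{proprings}) and that $R^\square(\ov\rho)[1/p]$ is Jacobson. Via the isomorphism $c(\ov{r_0}) : R^\square(\ov\rho)[1/p] \isom \tilde{R}^{\square,\psi}(\ov{r_0})[1/p]$ of Theorem \ref{compdefcadrrings}, a closed point of $R^\square(\lambda,\tau,\ov\rho)[1/p]$ --- that is, a potentially semi-stable $\rho : \Gamma_K \to \PGL_n(\ov\Q_p)$ lifting $\ov\rho$ with Hodge type $\lambda$ and inertial type $\tau$ --- corresponds to the unique lift $r : \Gamma_K \to \GL_n(\ov\Q_p)$ of $\rho$ with $\det r = \psi$ that reduces to $\ov{r_0}$.

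Pairwise coprimality of the ideals $I(\ell, t\otimes\alpha, \ov{r_0})[1/p]$ for $\alpha \in \Qq(\tau)$ holds because any common closed point $r$ would have two non-isomorphic inertial types $t\otimes\alpha$ and $t\otimes\alpha'$, contradicting the uniqueness of the inertial type of $r$. The inclusion $J(\lambda,\tau,\ov\rho)[1/p] \subset I(\ell, t\otimes\alpha, \ov{r_0})[1/p]$ for every $\alpha$ is Lemma \ref{factorsthrough} applied with $t\otimes\alpha$ in place of $t$; the compatibility of $\psi$ with $\ell$ and $t\otimes\alpha$ is preserved because $\alpha^n = 1$. These two facts combine to give $J[1/p] \subset \prod_\alpha I[1/p] = \bigcap_\alpha I[1/p]$ via the Chinese Remainder Theorem.

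The main technical step is the reverse inclusion: every closed point $\rho$ of $R^\square(\lambda,\tau,\ov\rho)[1/p]$ must arise from some $\tilde{R}^{\square,\psi}(\ell, t\otimes\alpha, \ov{r_0})[1/p]$. By Theorem \ref{liftpotst}, $\rho$ admits a potentially semi-stable lift $\tilde r : \Gamma_K \to \GL_n(\ov\Q_p)$ with determinant $\psi$, Hodge type $\ell$ (the Hodge type is forced to equal $\ell$ itself, not merely a lift of $\lambda$, because $\det \tilde r = \psi$ pins down the sums $\Sigma_\iota$), and inertial type $t \otimes \alpha$ for some $\alpha \in \Gg_\Gamma$. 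However, $\tilde r$ need not literally reduce to $\ov{r_0}$: its reduction differs from $\ov{r_0}$ by a character $\bar\gamma$ with $\bar\gamma^n = 1$, since both are lifts of $\ov\rho$ with determinant $\ov\psi$. Because $n$ is prime to $p$, $\bar\gamma$ lifts uniquely via the Teichmüller embedding to a character $\gamma : \Gamma_K \to \ov\Q_p^\times$ with $\gamma^n = 1$. Then $r := \tilde r \otimes \gamma^{-1}$ reduces to $\ov{r_0}$, and since $\gamma$ has finite order (hence all Hodge--Tate weights zero) $r$ remains potentially semi-stable with Hodge type $\ell$, determinant $\psi \gamma^{-n} = \psi$, and inertial type $t \otimes (\alpha\gamma^{-1})$, producing a closed point of $\tilde{R}^{\square,\psi}(\ell, t \otimes (\alpha\gamma^{-1}), \ov{r_0})[1/p]$.

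Combining the resulting bijection of closed points with the radicality of $J[1/p]$ and $\bigcap_\alpha I[1/p]$ in the Jacobson ring $R^\square(\ov\rho)[1/p]$ gives $J[1/p] = \prod_\alpha I[1/p]$; the product decomposition of $R^\square(\lambda,\tau,\ov\rho)[1/p]$ via the map $\prod_\alpha c(\ell, t\otimes\alpha, \ov{r_0})$ is then a formal consequence of the Chinese Remainder Theorem. The main obstacle is the Teichmüller adjustment, which ensures that the abstract lift supplied by Theorem \ref{liftpotst} can always be modified to reduce specifically to $\ov{r_0}$, thereby pinning down the correct component $\tilde{R}^{\square,\psi}(\ell, t\otimes\alpha, \ov{r_0})$ for each $\rho$.
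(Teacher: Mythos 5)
Your proposal is correct and follows essentially the same route as the paper: both arguments reduce the statement to a comparison of maximal spectra using the Jacobson property and reducedness of the generic fibers, prove disjointness via uniqueness of the inertial type, and handle the key surjectivity step by taking the lift supplied by Theorem \ref{liftpotst} and twisting it by the (Teichm\"uller lift of the) order-dividing-$n$ character measuring the discrepancy between its reduction and $\ov{r_0}$. The only cosmetic difference is that the paper packages the commutative algebra into Lemma \ref{ssch} and notes that reducedness of the fixed-determinant rings requires the comparison of \cite[Lemma 4.3.1]{EG14} rather than Proposition \ref{proprings} alone.
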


\begin{lemm}
\label{ssch}
Let $A$ be a Jacobson ring.
Let $I_1,\dots, I_r$ and $J$ be ideals of $A$, $B_i = A/I_i$ for all $i$
and $B = A/J$. Write $X = \spec(A)$, $Z_i = \spec(B_i)$, $Y = \spec(B)$
so that $Y$ and the $Z_i$ are closed subschemes of $X$. Assume that:

\begin{enumerate}
\item
$\spm(B)$ is the disjoint union of the $\spm(B_i)$.

\item
the ideals $I_1, \dots, I_r$, and $J$ are radical.

\end{enumerate}
Then $A = I_i + I_j$ for any $i \neq j$, $J = \cap_{i=1}^rI_i$, and the
natural map $B \to \prod_{i=1}^rB_i$ is an isomorphism.
\end{lemm}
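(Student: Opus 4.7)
The plan is to exploit the Jacobson hypothesis systematically: in a Jacobson ring, any radical ideal equals the intersection of the maximal ideals containing it, so set-theoretic statements on $\spm$ translate directly into ideal-theoretic statements. Along the way, all radicality assumptions on $I_1,\dots,I_r$ and $J$ are used to replace each $\sqrt{\,\cdot\,}$ by the ideal itself.

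First I would prove pairwise coprimality. Fix $i \neq j$. The hypothesis that the $\spm(B_k)$ are pairwise disjoint inside $\spm(B) \subset \spm(A)$ gives
\[
V(I_i + I_j) \cap \spm(A) = \spm(B_i) \cap \spm(B_j) = \emptyset.
\]
Since $A$ is Jacobson, an ideal whose vanishing locus contains no maximal ideal must be the unit ideal, so $I_i + I_j = A$.

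Next I would show $J = \bigcap_{i=1}^r I_i$. The inclusion $\spm(B_i) \subset \spm(B)$ says that every maximal ideal containing $I_i$ contains $J$; by the Jacobson property $J \subset \sqrt{I_i} = I_i$, so $J \subset K := \bigcap_i I_i$. For the reverse inclusion, note that $V(K) = \bigcup_i V(I_i)$, so
\[
V(K) \cap \spm(A) = \bigcup_i \spm(B_i) = \spm(B) = V(J) \cap \spm(A).
\]
Applying the Jacobson property again, $\sqrt{K} = \sqrt{J}$. As an intersection of radical ideals $K$ is radical, and $J$ is radical by assumption, so $K = J$.

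Finally, with pairwise coprimality of the $I_i$ and $J = \bigcap_i I_i$ in hand, the Chinese Remainder Theorem gives the isomorphism $B = A/J \isom \prod_{i=1}^r A/I_i = \prod_{i=1}^r B_i$. The only genuinely substantive step is the second one, and the main obstacle there is simply to keep straight which direction of inclusion requires which radicality hypothesis (radicality of $I_i$ for $J \subset I_i$, radicality of $J$ for the reverse inclusion); otherwise the argument is a direct unwinding of the Jacobson correspondence between closed subsets of $\spec(A)$ and closed subsets of $\spm(A)$.
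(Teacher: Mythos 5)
Your proof is correct and follows essentially the same route as the paper: disjointness of the maximal spectra forces pairwise comaximality, the Jacobson property plus radicality identifies $J$ with $\bigcap_i I_i$, and the Chinese Remainder Theorem (which the paper leaves implicit) gives the product decomposition. The only cosmetic difference is that you phrase the middle step via radical ideals as intersections of maximal ideals where the paper speaks of reduced closed subschemes being determined by their underlying sets, and you invoke the Jacobson hypothesis for the comaximality step where in fact the existence of a maximal ideal containing any proper ideal already suffices.
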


\begin{proof}
We show first that $I_i + I_j = A$. If this is not the case, $I_i + I_j$
is contained in some maximal ideal $\m$, which contradicts the fact that
the maximal spectra of $B_i$ and $B_j$ are disjoint. We deduce that the
$Z_i$ are disjoint closed subschemes of $X$.

From the Jacobson property of $A$ we deduce that
$|Y|$ is the disjoint union of the $|Z_i|$. As $Y$ and the $Z_i$
are reduced subschemes of $X$, they are entirely determined by their
underlying closed subsets of $|X|$. So $J = \cap_{i=1}^rI_i$, and 
$B = \prod_{i=1}^rB_i$.
\end{proof}

\begin{proof}[Proof of Theorem \ref{compdefrings0}]
We want to apply Lemma \ref{ssch} to $A = R(\ov\rho)[1/p]$, and
$I_\alpha = I(\ell,t\otimes\alpha,\ov{r_0})[1/p]$ for $\alpha \in \Qq(\tau)$,
so that $B_\alpha = 
\tilde{R}^{\square,\psi}(\ell,t\otimes\alpha,\ov{r_0})[1/p]$,
and
$J = J(\lambda,\tau,\ov\rho)[1/p]$ so that $B = 
R^{\square}(\lambda,\tau,\ov\rho)[1/p]$.

Observe first that $R(\ov\rho)[1/p]$ is a Jacobson ring (see for example
\cite[Proposition 2.17]{Kap}).

The condition that the ideals are radical comes from the properties of
the semistable deformation rings: their generic fiber is reduced, as
recall in Proposition \ref{proprings} for the deformation rings without
the determinant condition ;  for the deformation rings of $\GL_n$ with
the determinant condition, the result follows from the comparison between
deformation rings with and without determinant condition of \cite[Lemma
4.3.1]{EG14}.

Let us show the condition on the maximal spectra.
Consider a maximal ideal of
$\tilde{R}^{\square,\psi}(\ell,t\otimes\alpha,\ov{r_0})[1/p]$. This corresponds to a map
$\tilde{R}^{\square,\psi}(\ell,t\otimes\alpha,\ov{r_0})[1/p] \to F$ for some finite
extension $F$ of $E$, and so to a representation
$r : \Gamma_K \to \GL_n(\O_F)$ that is potentially semi-stable with Hodge
type $\ell$, inertial type $t\otimes\alpha$, and $\ov{r} = \ov{r_0}$. So
this gives rise to a potentially semi-stable representation with Hodge
type $\lambda$, inertial type $\tau$, and values in $\PGL_n(\O_F)$
lifting $\ov\rho$, so a maximal ideal of
$R^{\square}(\lambda,\tau,\ov\rho)[1/p]$.

So we have that
$\cup_{\alpha\in\Qq(\tau)}
\spm(\tilde{R}^{\square,\psi}(\ell,t\otimes\alpha,\ov{r_0})[1/p])
\subset
\spm(R^{\square}(\lambda,\tau,\ov\rho)[1/p])$.

Conversely, consider $R^{\square}(\lambda,\tau,\ov\rho)[1/p] \to F$. This
defines a potentially semi-stable representation $\rho : \Gamma \to
\PGL_n(\O_F)$ with Hodge type $\lambda$ and inertial type $\tau$ deforming
$\ov\rho$. Using Theorem \ref{liftpotst}, we see that there exists a lift
$r : \Gamma \to \GL_n(\O_F)$ (after extending $F$ if necessary) that is
potentially semi-stable, with Hodge type $\lambda$, inertial
type $t$, and determinant $\psi$.

Consider $\ov{r}$. Then it is a twist of $\ov{r_0}$, with
the same determinant. It means that $\ov{r}\otimes\ov\alpha=\ov{r_0}$
for some $\alpha\in \Gg_\Gamma$. 

Replacing $r$ by $r' = r\otimes\alpha$, we get a representation $r'$
lifting $\rho$ with
$\ov{r'} = \ov{r_0}$, and $r'$ is potentially semi-stable with Hodge type
$\ell$, inertial type $t\otimes\alpha$, and determinant
$\psi$. So this means that the original point of 
$\spm(R^{\square}(\lambda,\tau,\ov\rho)[1/p])$
is in 
$\spm(\tilde{R}^{\square,\psi}(\ell,t\otimes\alpha,\ov{r_0})[1/p])$.

In conclusion, we have shown that
$$
\cup_{\alpha\in\Qq(\tau)}
\spm(\tilde{R}^{\square,\psi}(\ell,t\otimes\alpha,\ov{r_0})[1/p])
=
\spm(R^{\square}(\lambda,\tau,\ov\rho)[1/p]).
$$

Finally, the different 
$\spm(\tilde{R}^{\square,\psi}(\ell,t\otimes\alpha,\ov{r_0})[1/p])$ 
for $\alpha \in \Qq(\tau)$, are disjoint, as 
$t \otimes \alpha$ and $t \otimes \alpha'$ are non-isomorphic if $\alpha$
and $\alpha'$ are not equal in $\Qq(\tau)$.
\end{proof}

\begin{theo}
\label{compdefrings}
The map 
$$
\prod_{\alpha\in\Qq(\tau)}c(\ell,t\otimes\alpha,\ov{r_0}) :
R^{\square}(\lambda,\tau,\ov\rho) \to 
\prod_{\alpha\in\Qq(\tau)}\tilde{R}^{\square,\psi}(\ell,t\otimes\alpha,\ov{r_0})
$$
makes
$\prod_{\alpha\in\Qq(\tau)}\tilde{R}^{\square,\psi}(\ell,t\otimes\alpha,\ov{r_0})$
into a $R^{\square}(\lambda,\tau,\ov\rho)$-module that is $p$-torsion
free and generically free of rank $1$.
\end{theo}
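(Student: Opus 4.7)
The plan is to verify the two assertions—$p$-torsion freeness and generic freeness of rank one—separately, with the second being essentially immediate from Theorem \ref{compdefrings0} and the first reducing to a property of each individual factor.

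For generic freeness of rank one, I would invoke Theorem \ref{compdefrings0} directly: it identifies the induced map
\[
R^{\square}(\lambda,\tau,\ov\rho)[1/p] \;\longrightarrow\; \prod_{\alpha\in\Qq(\tau)}\tilde{R}^{\square,\psi}(\ell,t\otimes\alpha,\ov{r_0})[1/p]
\]
as a ring isomorphism. Viewing the target as a module over the source via this map, it is then freely generated by the element $1$ after inverting $p$, which is exactly the statement that the module is generically free of rank one. So no further work is required for this half.

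For $p$-torsion freeness of the module, it suffices to show that the target ring $\prod_{\alpha\in\Qq(\tau)}\tilde{R}^{\square,\psi}(\ell,t\otimes\alpha,\ov{r_0})$ is $p$-torsion free as an abelian group, since the module structure factors through scalar multiplication by elements of the source. A finite product of $p$-torsion free rings is $p$-torsion free, so I only need to establish that each factor $\tilde{R}^{\square,\psi}(\ell,t\otimes\alpha,\ov{r_0})$ is $p$-torsion free. This is the analogue for the fixed-determinant quotient of the $p$-torsion freeness recalled in Proposition \ref{proprings}. It can be obtained either by rerunning Balaji's construction after intersecting with the closed locus where the determinant equals $\psi$ (which cuts out a $p$-torsion free quotient of a $p$-torsion free ring), or by appealing to the comparison between deformation rings with and without fixed determinant of \cite[Lemma 4.3.1]{EG14}, already invoked in the proof of Theorem \ref{compdefrings0} to get reducedness of the generic fiber.

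The main potential obstacle is thus not in the module-theoretic part—which is essentially formal once Theorem \ref{compdefrings0} is in hand—but in ensuring that the determinant-constrained rings inherit $p$-torsion freeness from their unconstrained counterparts. Once this mild verification is done, the theorem assembles at once from the two observations above, with no further argument needed.
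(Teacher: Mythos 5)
Your proposal is correct and follows essentially the same route as the paper: $p$-torsion freeness of each fixed-determinant ring $\tilde{R}^{\square,\psi}(\ell,t\otimes\alpha,\ov{r_0})$ gives the first claim, and the isomorphism after inverting $p$ from Theorem \ref{compdefrings0} gives the second. The only point you leave implicit is that ``free of rank one after inverting $p$'' yields ``generically free of rank one'' because flatness over $\Z_p$ (i.e.\ the $p$-torsion freeness you establish) forces every generic point of $\spec R^{\square}(\lambda,\tau,\ov\rho)$ to lie in characteristic zero; the paper makes this step explicit.
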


\begin{proof}
All the rings $\tilde{R}^{\square,\psi}(\ell,t\otimes\alpha,\ov{r_0})$
and $R^{\square}(\lambda,\tau,\ov\rho)$ are $p$-torsion free, as recalled
in Proposition \ref{proprings}.
The ring $R^{\square}(\lambda,\tau,\ov\rho)$ is then flat over $\Z_p$, so
all its generic points are in characteristic $0$. So Theorem
\ref{compdefrings0} gives the last part of the statement.
\end{proof}

\subsubsection{The case of potentially crystalline deformation rings}

We can also define rings $\tilde{R}^{\square,cr,\psi}(\ell,t,\ov{r})$ and
$R^{\square,cr}(\lambda,\tau,\ov\rho)$ that classify representations that
are potentially crystalline with Hodge type $\ell$ (resp.
$\lambda$) and inertial type $t$ (resp. $\tau$). The same proofs as before give:

\begin{theo}
\label{compdefringscr}
The map 
$$
\prod_{\alpha\in\Qq(\tau)}c(\ell,t\otimes\alpha,\ov{r_0}) :
R^{\square,cr}(\lambda,\tau,\ov\rho) \to 
\prod_{\alpha\in\Qq(\tau)}\tilde{R}^{\square,cr,\psi}(\ell,t\otimes\alpha,\ov{r_0})
$$
induces an isomorphism
$$
\prod_{\alpha\in\Qq(\tau)}c(\ell,t\otimes\alpha,\ov{r_0}) :
R^{\square,cr}(\lambda,\tau,\ov\rho)[1/p] \to 
\prod_{\alpha\in\Qq(\tau)}\tilde{R}^{\square,cr,\psi}(\ell,t\otimes\alpha,\ov{r_0})[1/p]
$$
and makes
$\prod_{\alpha\in\Qq(\tau)}\tilde{R}^{\square,cr,\psi}(\ell,t\otimes\alpha,\ov{r_0})$
into a $R^{\square,cr}(\lambda,\tau,\ov\rho)$-module that is $p$-torsion
free and generically free of rank $1$.
\end{theo}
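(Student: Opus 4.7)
The plan is to mimic verbatim the two-step argument used for Theorem \ref{compdefrings} (via Theorem \ref{compdefrings0}), substituting ``potentially crystalline'' for ``potentially semi-stable'' everywhere. All the inputs are already available: the functoriality of Theorem \ref{functorpstdefrings} was stated in the potentially semi-stable generality but the exact same argument gives the crystalline version, the crystalline analogue of the lifting result (Theorem \ref{liftpotst}) is included in that theorem's last sentence, and Proposition \ref{proprings} applies equally to potentially crystalline deformation rings (they are $p$-torsion free with reduced generic fiber).

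First I would introduce the ideals $I^{cr}(\ell,t\otimes\alpha,\ov{r_0})$ and $J^{cr}(\lambda,\tau,\ov\rho)$ of $R^\square(\ov\rho)$ as the kernels of the surjections onto $\tilde R^{\square,cr,\psi}(\ell,t\otimes\alpha,\ov{r_0})$ and $R^{\square,cr}(\lambda,\tau,\ov\rho)$ respectively, and verify the crystalline analogue of Lemma \ref{factorsthrough}: the composition $R^\square(\ov\rho) \to \tilde R^{\square,cr,\psi}(\ell,t\otimes\alpha,\ov{r_0})$ factors through $R^{\square,cr}(\lambda,\tau,\ov\rho)$ by functoriality of potentially crystalline deformation rings, so that $J^{cr}(\lambda,\tau,\ov\rho) \subset I^{cr}(\ell,t\otimes\alpha,\ov{r_0})$ for every $\alpha\in \Qq(\tau)$. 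This yields a canonical map $c^{cr}(\ell,t\otimes\alpha,\ov{r_0}) : R^{\square,cr}(\lambda,\tau,\ov\rho) \to \tilde R^{\square,cr,\psi}(\ell,t\otimes\alpha,\ov{r_0})$ for every $\alpha$.

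Next I would prove the crystalline analogue of Theorem \ref{compdefrings0} by applying Lemma \ref{ssch} to the Jacobson ring $R^\square(\ov\rho)[1/p]$, the ideals $I_\alpha = I^{cr}(\ell,t\otimes\alpha,\ov{r_0})[1/p]$ for $\alpha\in \Qq(\tau)$, and $J = J^{cr}(\lambda,\tau,\ov\rho)[1/p]$. The radical hypothesis follows from the reducedness of the generic fiber of the potentially crystalline deformation rings (Proposition \ref{proprings} together with \cite[Lemma 4.3.1]{EG14} to handle the determinant condition). The identification of maximal spectra
\[
\bigsqcup_{\alpha\in\Qq(\tau)} \spm\bigl(\tilde R^{\square,cr,\psi}(\ell,t\otimes\alpha,\ov{r_0})[1/p]\bigr) = \spm\bigl(R^{\square,cr}(\lambda,\tau,\ov\rho)[1/p]\bigr)
\]
is where I expect the main obstacle to live, but the argument is the same as in the semi-stable case: an inclusion by functoriality of Hodge and inertial types (Propositions \ref{Hodgefunctorial} and \ref{inertialfunctorial}), and the reverse inclusion using the crystalline version of Theorem \ref{liftpotst}, which guarantees that any $\PGL_n$-valued potentially crystalline representation of the prescribed $(\lambda,\tau)$ admits a $\GL_n$-valued potentially crystalline lift of Hodge type $\ell$ and inertial type $t\otimes\alpha$ for a unique $\alpha \in \Qq(\tau)$, with determinant $\psi$ after a suitable twist whose reduction adjusts $\ov{r}$ to $\ov{r_0}$. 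Disjointness of the pieces in the maximal spectrum is automatic, since a potentially crystalline representation has a single well-defined inertial type.

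Finally, the module-theoretic consequence follows formally. The ring $R^{\square,cr}(\lambda,\tau,\ov\rho)$ is $p$-torsion free by Proposition \ref{proprings}, hence flat over $\Z_p$, so its generic points all lie in characteristic zero; the same holds for each $\tilde R^{\square,cr,\psi}(\ell,t\otimes\alpha,\ov{r_0})$. The isomorphism after inverting $p$ established by Lemma \ref{ssch} therefore shows that $\prod_{\alpha} \tilde R^{\square,cr,\psi}(\ell,t\otimes\alpha,\ov{r_0})$ is a $p$-torsion free $R^{\square,cr}(\lambda,\tau,\ov\rho)$-module which is free of rank $1$ at every generic point, completing the proof.
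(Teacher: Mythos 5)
Your proposal is correct and is exactly the paper's argument: the paper proves Theorem \ref{compdefringscr} by simply invoking ``the same proofs as before,'' i.e.\ the crystalline analogues of Theorems \ref{compdefrings0} and \ref{compdefrings}, with the crystalline case of Theorem \ref{liftpotst} (stated in its last sentence) supplying the lifting step. You have merely spelled out in detail what the paper leaves implicit, and all the inputs you cite are indeed available in the stated generality.
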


\subsection{Application to cycles}
\label{cycles}

\subsubsection{Definitions related to cycles}

We recall some definitions from \cite[Section 2.2]{EG14}.

Let $X$ be a Noetherian scheme. Let $X_d$ be the set of points of $X$ of
dimension $d$ (that is, the set of points $x$ such that $\ov{\{x\}}$ is
of dimension $d$).

\begin{defi}
A $d$-dimensional cycle on $X$ is an element of
$\Z[X_d]$.
\end{defi}

We denote by $Z_d$ the group of $d$-dimensional cycles.

The support of a cycle $\sum n_x[x]$ is the set of points $x$ such that
$n_x \neq 0$.

Let $Y$ be a closed subscheme of $X$ that is equidimensional of dimension
$d$. We define a $d$-dimensional cycle $Z(Y)$ as follows: write first $Z(Y) =
\sum_{i=1}^rZ(Y_i)$ where the $Y_i$ are the irreducible components of $Y$.
If $Y$ is irreducible, let $y$ be its generic point (so that $y \in
X_d$). Then we define $Z(Y)$ to be $m_{Y,y}[y]$ where $m_{Y,y} \in \Z$ is
the length of $\O_{Y,y}$ as an $\O_{Y,y}$-module.

\subsubsection{The $\GL_n$ case}

Let $\ov{r} : \Gamma_K \to \GL_n(\F)$ be a continuous representation,
let $\psi$ be a continuous character lifting $\det\ov{r}$, and let
$\tilde{R}^{\square,\psi}(\ov{r})$ be the ring classifying framed
deformations of $\ov{r}$ with determinant $\psi$. Let
$\tilde{\X}^\psi(\ov{r}) = \spec(\tilde{R}^{\square,\psi}(\ov{r}))$
and let
$\tilde{X}^\psi(\ov{r}) = \spec(\tilde{R}^{\square,\psi}(\ov{r})/\varpi)$
be its special fiber.

Note that if $\psi'$ is another character lifting $\det\ov{r}$ then
$\tilde{R}^{\square,\psi}(\ov{r})$
and
$\tilde{R}^{\square,\psi'}(\ov{r})$
are canonically isomorphic,
the isomorphism being given on representations by torsion by the unique
$n$-th root of $\psi/\psi'$ that reduces modulo $\varpi$ to the trivial
character.

We want to identify all the schemes 
$\tilde{\X}^\psi(\ov{r})$ and also all 
their special fibers 
$\tilde{X}^\psi(\ov{r})$.
So we choose any character $\psi$ lifting $\det\ov{r}$, and we set
$\hat{R}(\ov{r}) = \tilde{R}^{\square,\psi}(\ov{r})$, so that for any
$\psi'$ lifting $\det\ov{r}$, $\tilde{R}^{\square,\psi'}(\ov{r})$ is
canonically isomorphic to $\hat{R}(\ov{r})$ and we can forget about our
choice of $\psi$.
Set also $\tilde{\X}(\ov{r})
= \spec \hat{R}(\ov{r})$, so that each 
$\tilde{\X}^\psi(\ov{r})$ is canonically identified to 
$\tilde{\X}(\ov{r})$ and
each 
$\tilde{X}^\psi(\ov{r})$ is canonically identified to the special fiber
$\tilde{X}(\ov{r})$ of 
$\tilde{\X}(\ov{r})$.

Let $d = n^2 + [K:\Q_p]n(n-1)/2$. If $\ell$ is a regular
Hodge type for $\GL_n$, and $t : I_K \to \GL_n(\ov\Q_p)$ is an inertial type,
the ring $\tilde{R}^{\square,\psi}(\ell,t,\ov{r})$ is a quotient of
$\tilde{R}^{\square,\psi}(\ov{r})$ of dimension $d$.
So $\spec(\tilde{R}^{\square,\psi}(\ell,t,\ov{r})/\varpi)$ is a closed
subscheme of $\tilde{X}(\ov{r})$, and it is equidimensional of
dimension $d-1$ (or is empty) by \cite{Kis08}.

Note that
$\spec(\tilde{R}^{\square,\psi}(\ell,t,\ov{r})/\varpi)$, seen as a subscheme of
$\tilde{X}(\ov{r})$,
does not depend on $\psi$ as long as it is compatible
to $\ell,t$. 
Indeed, if $\psi$ and $\psi'$ both satisfy the compatibility condition,
then $\psi/\psi'$ is unramified, 
and the canonical isomorphism between
$\tilde{R}^{\square,\psi}(\ov{r})$
and
$\tilde{R}^{\square,\psi'}(\ov{r})$
induces an isomorphism between
$\tilde{R}^{\square,\psi}(\ell,t,\ov{r})$
and
$\tilde{R}^{\square,\psi'}(\ell,t,\ov{r})$.

Hence we can consider the $d$-dimensional cycle
$Z(\tilde{R}^{\square,\psi}(\ell,t,\ov{r})/\varpi)$, and also
the $(d-1)$-dimensional cycle
$Z(\tilde{R}^{\square,\psi,cr}(\ell,t,\ov{r})/\varpi)$.
These are cycles on $\tilde{X}(\ov{r})$ that do not depend on $\psi$
as long as it is compatible to $\ell,t$.

\subsubsection{The $\PGL_n$ case}

We can do the same construction for $\PGL_n$. 

Let $\ov{\rho} : \Gamma_K \to \PGL_n(\F)$ be a continuous representation,
let $R^{\square}(\ov{\rho})$ be the ring classifying framed
deformations of $\ov{\rho}$.
Let $\X(\ov{\rho}) = \spec(R^{\square}(\ov{\rho}))$
and let
$X(\ov{\rho}) = \spec(R^{\square}(\ov{\rho})/\varpi)$
be its special fiber.

The isomorphism of Theorem \ref{defcadrrings} gives a canonical
isomorphism between 
$\tilde{\X}(\ov{r})$ and $\X(\ov{\rho})$,
and between
$\tilde{X}(\ov{r})$ and $X(\ov{\rho})$,
when $\ov{r}$ is a lift of $\ov\rho$.

Let $d = n^2 + [K:\Q_p]n(n-1)/2$ as before. If $\lambda$ is a regular
Hodge type for $\PGL_n$, and $\tau : I_K \to \PGL_n(\ov\Q_p)$ is an inertial type,
the ring $R^{\square}(\lambda,\tau,\ov{\rho})$ is a quotient of
$R^{\square}(\ov{\rho})$ of dimension $d$ (this follows either from
Theorem \ref{compdefrings}, or from \cite[Prop 4.1.5]{Bal}).
So $\spec(R^{\square}(\lambda,\tau,\ov{\rho})/\varpi)$ is a closed
subscheme of $X(\ov{\rho})$, which is equidimensional of
dimension $d-1$ (or is empty).

\subsubsection{Cycles for $\GL_n$ and $\PGL_n$}
\label{cyclesglnpgln}

Let $\ov{\rho} : \Gamma_K \to \PGL_n(\F)$ be a continuous representation,
and let $\ov{r_0} : \Gamma_K \to \GL_n(\F)$ 
be a lift of $\ov\rho$.

In what follows we identify 
$\tilde{\X}(\ov{r_0})$ with $\X(\ov{\rho})$,
and also
$\tilde{X}(\ov{r_0})$ with $X(\ov{\rho})$.

\begin{theo}
\label{cyclesdefrings}
Let $\lambda$ be a regular Hodge type for $\PGL_n$, and $\tau$ an
inertial type for $\PGL_n$. We set
$d = n^2 + [K:\Q_p]n(n-1)/2$.
Let $\ell$ be a lift of $\lambda$ to a Hodge type for $\GL_n$, and let
$t$ be a fixed inertial type for $\GL_n$ lifting $\tau$. Let $\psi$ be a
character compatible to $t$ and $\ell$.
Then
$$
Z_d(R^{\square}(\lambda,\tau,\ov\rho)) = 
\sum_{\alpha\in \Qq(\tau)}
Z_d(\tilde{R}^{\square,\psi}(\ell,t\otimes\alpha,\ov{r_0}))
$$
as cycles on $\X(\ov{\rho})$,
and the cycles 
$Z_d(\tilde{R}^{\square,\psi}(\ell,t\otimes\alpha,\ov{r_0}))$
for $\alpha\in \Qq(\tau)$ have disjoint supports.

Moreover
$$
Z_{d-1}(R^{\square}(\lambda,\tau,\ov\rho)/\varpi) = 
\sum _{\alpha\in \Qq(\tau)}
Z_{d-1}(\tilde{R}^{\square,\psi}(\ell,t\otimes\alpha,\ov{r_0})/\varpi)
$$
as cycles on $X(\ov\rho)$.
\end{theo}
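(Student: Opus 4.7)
The plan is to prove the two displayed equalities separately, both leveraging the module-theoretic comparison of Theorem \ref{compdefrings}. The first equality (for $d$-dimensional cycles on $\X(\ov\rho)$) should follow directly from Theorem \ref{compdefrings0}: since every ring involved is $p$-torsion free by Proposition \ref{proprings}, its $d$-dimensional generic points all lie in characteristic $0$, so inverting $p$ induces a bijection between the sets of $d$-dimensional minimal primes, preserving multiplicities by localisation. The isomorphism $R^{\square}(\lambda,\tau,\ov\rho)[1/p] \cong \prod_{\alpha}\tilde{R}^{\square,\psi}(\ell,t\otimes\alpha,\ov{r_0})[1/p]$ then gives the equality of $d$-cycles on $\X(\ov\rho)$ directly, and the coprimality of the ideals $I(\ell,t\otimes\alpha,\ov{r_0})[1/p]$ translates into the disjointness of supports.

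For the second (special-fiber) equality, write $A := R^{\square}(\lambda,\tau,\ov\rho)$ and $M := \prod_{\alpha}\tilde{R}^{\square,\psi}(\ell,t\otimes\alpha,\ov{r_0})$, and let $c : A \to M$ be the comparison map. By Lemma \ref{factorsthrough} each factor of $M$ is a quotient of $A$, so $M$ is a finite $A$-module. By Theorem \ref{compdefrings}, $M$ is $p$-torsion free and $c[1/p]$ is an isomorphism; since $A$ is also $p$-torsion free (Proposition \ref{proprings}), $c$ is injective with cokernel $C$ that is killed by some power of $p$, hence supported on the special fiber and of dimension at most $d-1$.

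Applying $-\otimes_A A/\varpi$ to $0 \to A \to M \to C \to 0$ and identifying $\mathrm{Tor}_1^A(A/\varpi, C)$ with $C[\varpi]$ via the $\varpi$-regular resolution of $A/\varpi$ yields the four-term exact sequence
\[
0 \longrightarrow C[\varpi] \longrightarrow A/\varpi \longrightarrow M/\varpi \longrightarrow C/\varpi \longrightarrow 0.
\]
Breaking this into two short exact sequences and using additivity of the $(d-1)$-dimensional cycle on sequences of modules with support of dimension $\le d-1$, we obtain
\[
Z_{d-1}(A/\varpi) - Z_{d-1}(M/\varpi) = Z_{d-1}(C[\varpi]) - Z_{d-1}(C/\varpi).
\]
The multiplication-by-$\varpi$ four-term exact sequence $0 \to C[\varpi] \to C \to C \to C/\varpi \to 0$, split into two short exact sequences, forces $Z_{d-1}(C[\varpi]) = Z_{d-1}(C/\varpi)$, so $Z_{d-1}(A/\varpi) = Z_{d-1}(M/\varpi)$. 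Since the product $M = \prod_\alpha M_\alpha$ is finite, $M/\varpi = \bigoplus_\alpha M_\alpha/\varpi$ as $A$-modules, so $Z_{d-1}(M/\varpi) = \sum_\alpha Z_{d-1}(M_\alpha/\varpi)$, giving the second identity.

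The main obstacle is controlling the possible discrepancy in the special fiber introduced by the cokernel $C$: a priori one might fear that passing from $A$ to $M$ creates or destroys $(d-1)$-dimensional components. The critical observation that resolves this is that $C$ is $p$-torsion, so the multiplication-by-$\varpi$ trick forces $C[\varpi]$ and $C/\varpi$ to contribute equally to the $(d-1)$-cycle, and their contributions cancel in the four-term exact sequence derived from $0 \to A \to M \to C \to 0$.
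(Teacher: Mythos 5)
Your argument is correct and follows the same route as the paper: the first equality and the disjointness of supports are read off from Theorem \ref{compdefrings0} after noting that all $d$-dimensional points are in characteristic zero, and the special-fibre equality is deduced from the generic rank-one statement of Theorem \ref{compdefrings}. The only difference is that where the paper simply cites \cite[Prop.\ 2.2.13]{EG14} for the second step, you reprove that proposition inline (in the rank-one case) via the two four-term exact sequences and the cancellation $Z_{d-1}(C[\varpi])=Z_{d-1}(C/\varpi)$ for the $\varpi$-power-torsion cokernel, which is exactly the mechanism behind the cited result.
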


\begin{proof}
The points of $\spec(R^{\square}(\lambda,\tau,\ov\rho))$ and 
$\spec(\tilde{R}^{\square,\psi}(\ell,t\otimes\alpha,\ov{r_0}))$
of dimension $d$ are in
characteristic zero, as all the rings involved are flat over $\Z_p$. So
the first assertion is a rephrasing of Theorem \ref{compdefrings0}.

The second assertion then follows from Theorem
\ref{compdefrings} and 
\cite[Prop. 2.2.13]{EG14}, applied
to the $R^{\square}(\lambda,\tau,\ov\rho)$-module $\prod_{\alpha\in \Qq(\tau)}
\tilde{R}^{\square,\psi}(\ell,t\otimes\alpha,\ov{r_0})$. 
\end{proof}

\section{Breuil-Mézard conjectures: from $\GL_n$ to $\PGL_n$}
\label{BM}

The Breuil-Mézard conjectures describe the cycles attached to potentially
semi-stable deformation rings for representations with values in $\GL_n$
in terms of some multiplicities of automorphic nature. We state some
similar conjectures for $\PGL_n$, and show that they are consequences of
the conjectures (or known results) for $\GL_n$.

\subsection{Representations of $\GL_n$ and $\SL_n$}
\label{reprGLSL}

\subsubsection{Algebraic representations of $\GL_n$ and $\SL_n$}
\label{algreprGLSL}

The reference for this Section is \cite{Jan}.

We briefly recall the theory of algebraic representations of $\GL_n$ and
$\SL_n$. Here $G$ denotes either $\GL_n$ or $\SL_n$ or $\PGL_n$, and $B$ is the Borel
subgroup of upper triangular matrices. Let $T$ be the (maximal) torus of
diagonal matrices.
We add an index $\GL_n$ or $\SL_n$ or $\PGL_n$ to the objects to distinguish
between the groups when necessary. We see all these groups as
algebraic groups over $\Z_p$.

Let $X^*(T)$ be the group of characters of the torus $T$, and $X^*(T)_+$ the
set of dominant characters with respect to $B$. Denote by $X_*(T)$ the
group of cocharacters of the torus $T$.

Let $\mu \in X^*(T)$, we see it as a character of $B$ via the
canonical projection $B \to T$. We define the dual Weyl module as the
following finite-dimensional representation of $G$:
$$
W_G(\mu) = \ind_{B}^G(w_0\mu)
$$
where $w_0$ is the longest element of the Weyl group
(observe that $W(\mu) = 0$ if $\mu$ is not in $X^*(T)_+$).

Recall that $X^*(T_{\GL_n})$ is isomorphic to $\Z^n$, and $X^*(T_{\SL_n})$
is isomorphic to $\Z^n/\Z\ul{1}$ (where $\ul{1} = (1,\dots,1)$), with the
natural map $T_{\SL_n} \to T_{\GL_n}$ inducing the natural projection
$\Z^n \to \Z^n/\Z\ul{1}$. If $\mu \in X^*(T_{\GL_n})$ we denote by
$\ov{\mu}$ its image in $X^*(T_{\SL_n})$. Note that $\mu$ is
dominant if and only if $\ov\mu$ is.

The following isomorphism is easily checked for $\mu \in
X^*(T_{\GL_n})_+$:
\begin{equation}
\label{restrsln}
W_{\GL_n}(\mu)_{|\SL_n} = W_{\SL_n}(\ov\mu)
\end{equation}

Let $R$ be the set of roots for $\GL_n$ relative to $T$. 
It is the set of characters
$e_i-e_j$ for $1 \leq i,j \leq n$, where $e_i$ is the character sending
a diagonal matrix to its $i$-th diagonal coefficient. 
With our choice of Borel subgroup, the
positive roots are the $e_i-e_j$ for $i < j$, and the positive simple
roots are the $e_i-e_{i+1}$. If $\alpha$ is a root, its attached coroot
$\alpha^\vee$ is equal to $\alpha$. For $\SL_n$, the roots, positive
roots, positive simple roots are simply the image in $X^*(T_{\SL_n})$ of
the objects for $\GL_n$.

\subsubsection{Serre weights}

Let $\Sigma$ be the set of Serre weights for $\SL_n(\F_q)$, that is, the
set of isomorphism classes of irreducible representations of
$\SL_n(\F_q)$ with coefficients in $\ov\F_p$.

Let $\s$ be the set of Serre weights of $\GL_n(\F_q)$.

The kernel of the reduction map $\GL_n(\O_K) \to \GL_n(\F_q)$ is a
pro-$p$-subgroup, and similarly for the kernel of 
$\SL_n(\O_K) \to \SL_n(\F_q)$. This means that any irreducible
representation of $\GL_n(\O_K)$ with coefficients in $\ov\F_p$ factors
through $\GL_n(\F_q)$, so we can also see $\s$ as the set of isomorphism
classes of irreducible representations of $\GL_n(\O_K)$ on $\ov\F_p$, and $\Sigma$ as 
as the set of isomorphism classes of irreducible representations of
$\SL_n(\O_K)$ on $\ov\F_p$.

We have the following result:

\begin{theo}
\label{serreweights}
If $s$ is an irreducible representation of $\GL_n(\F_q)$, then its
restriction to $\SL_n(\F_q)$ is still irreducible. The map $r : \s \to
\Sigma$ given by restriction is surjective. If $\sigma$ is the
restriction of $s$, then $r^{-1}(\sigma) = \{s \otimes \det^m, 0 \leq m
< q-1\}$ and these $q-1$ representations are distinct.
\end{theo}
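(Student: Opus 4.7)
The plan is to apply Clifford theory to the normal inclusion $N = \SL_n(\F_q) \triangleleft G = \GL_n(\F_q)$, whose quotient $G/N \cong \F_q^\times$ (via the determinant) is cyclic of order $q-1$, coprime to $p$. First I establish that the twists $s \otimes \det^m$ for $0 \leq m < q-1$ are pairwise non-isomorphic: by the Steinberg--Curtis highest weight parameterization of irreducible $\ov\F_p$-representations of $G$, each irreducible $s$ has a well-defined highest weight $\lambda_s \in \hat T$ (for $T$ the diagonal torus of $G$), and the highest weight of $s \otimes \det^m$ is $\lambda_s \cdot (\det|_T)^m$. An isomorphism $s \otimes \det^{m_1} \cong s \otimes \det^{m_2}$ would force the character $(\det|_T)^{m_2-m_1}$, which sends $\mathrm{diag}(t_1,\dots,t_n)$ to $(t_1 \cdots t_n)^{m_2-m_1}$, to be trivial on $T$, whence $(q-1) \mid (m_2-m_1)$.

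Next, I show that $s|_N$ is irreducible. The restriction $s|_N$ is semisimple because its socle is automatically a $G$-submodule of $s$ (since $N$ is normal in $G$), and $s$ is $G$-irreducible. To compute $\dim \End_N(s|_N)$, I apply Frobenius reciprocity and the projection formula:
\[
\End_N(s|_N) \cong \Hom_G\bigl(\mathrm{Ind}_N^G(s|_N),\, s\bigr) \cong \Hom_G\bigl(s \otimes \ov\F_p[G/N],\, s\bigr),
\]
together with the decomposition $\ov\F_p[G/N] \cong \bigoplus_{m=0}^{q-2} \det^m$ of the regular representation of the cyclic group $G/N$ of order coprime to $p$. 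By Schur's lemma and the distinctness just established, only the summand for $m=0$ contributes, so $\dim \End_N(s|_N) = 1$; combined with semisimplicity this forces $s|_N$ to be irreducible.

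Finally, surjectivity of $r$ and the fiber description both follow from Frobenius reciprocity. Any $\sigma \in \Sigma$ occurs as a constituent (hence, by the irreducibility just proved, as the whole restriction) of any irreducible constituent $s$ of $\mathrm{Ind}_N^G \sigma$, so $r$ is surjective. Given such an $s$, every $s' \in r^{-1}(\sigma)$ embeds in $\mathrm{Ind}_N^G(s|_N) \cong \bigoplus_m s \otimes \det^m$, forcing $s' \cong s \otimes \det^m$ for some $m$; combined with distinctness this gives $r^{-1}(\sigma) = \{s \otimes \det^m : 0 \leq m < q-1\}$ with exactly $q-1$ distinct elements. The main obstacle is the distinctness step: this is the point where the non-trivial modular representation theory of finite groups of Lie type --- namely the Steinberg--Curtis parameterization of irreducible $\ov\F_p$-representations of $\GL_n(\F_q)$ by highest weights in $\hat T$ --- genuinely enters, while everything else is formal Clifford theory in the tame setting.
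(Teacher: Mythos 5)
Your proof is correct, but it takes a genuinely different route from the paper's. The paper deduces Theorem \ref{serreweights} from the classification of the irreducible $\ov{\F}_p$-representations of \emph{both} $\GL_n(\F_q)$ and $\SL_n(\F_q)$ by $q$-restricted highest weights (Theorem \ref{descrirr}, quoting Jantzen for $\SL_n$ and Herzig for $\GL_n$), combined with the compatibility of dual Weyl modules and their socles under restriction (Lemma \ref{restrbarFp}); irreducibility of the restriction, surjectivity, and the fibre description are then read off from the projection $X_s(T_{\GL_n}) \to X_s(T_{\SL_n})$ and the identity $F_{\GL_n}(\mu+m\ul{1}) = F_{\GL_n}(\mu)\otimes\det^m$. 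You instead run Clifford theory for $\SL_n(\F_q) \triangleleft \GL_n(\F_q)$ with cyclic quotient of order $q-1$ prime to $p$, so that everything reduces formally (semisimplicity of the restriction, the endomorphism-algebra computation via Frobenius reciprocity and the projection formula, and the fibre description via $\mathrm{Ind}(s|_N) \cong \bigoplus_m s\otimes\det^m$) to the single non-trivial fact that the $q-1$ twists $s\otimes\det^m$ are pairwise non-isomorphic. This is an attractive repackaging: it isolates exactly where the modular representation theory of $\GL_n(\F_q)$ enters, and it needs no input about $\SL_n$ at all, whereas the paper also invokes the $\SL_n$ classification (which it in any case needs elsewhere, e.g.\ for Lemma \ref{restrbarFp}). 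The one point to state carefully is your distinctness step: what you actually use is that $s^{U(\F_q)}$ is one-dimensional for $U$ the unipotent radical of the Borel, so that the character by which $T(\F_q)$ acts on this line is an invariant of the isomorphism class of $s$; this is the Curtis--Richen form of the highest-weight theory, not quite the parameterization by restricted algebraic weights itself (which does \emph{not} inject into characters of the finite torus). Either formulation is standard and available in the references already cited, so this is a matter of precise attribution rather than a gap; alternatively the distinctness follows directly from the injectivity statement in Theorem \ref{descrirr} applied to the normalized restricted weights of $s\otimes\det^{m_1}$ and $s\otimes\det^{m_2}$.
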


In order to prove this, we recall the general construction of irreducible
representations of $\GL_n(\F_q)$ and $\SL_n(\F_q)$. 

Let $G$ be $\GL_n$ or $\SL_n$ and let $T$ be as before.
Let $\mu \in X^*(T)_+$. 
Denote by $W_G(\mu)_{\ov\F_p}$ the representation of $G(\ov\F_p)$
obtained by evaluating
$W_G(\mu)$ at $\ov\F_p$.

Let $F_G(\mu)$ be the socle of $W_G(\mu)_{\ov\F_p}$. Then the set of irreducible
$G(\ov\F_p)$-representations is exacly the set of the $F_G(\mu)$, $\mu \in
X^*(T)_+$. From equation (\ref{restrsln}) we get:

\begin{lemm}
\label{restrbarFp}
Let $\mu \in X^*(T_{\GL_n})_+$ and let $\ov\mu$ be its image in  $X^*(T_{\SL_n})_+$. Then
$W_{\SL_n}(\ov\mu) = W_{\GL_n}(\mu)_{|\SL_n(\ov\F_p)}$, and
$F_{\SL_n}(\ov\mu) = F_{\GL_n}(\mu)_{|\SL_n(\ov\F_p)}$.
\end{lemm}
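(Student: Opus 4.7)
The first equality $W_{\SL_n}(\ov\mu) = W_{\GL_n}(\mu)_{|\SL_n(\ov\F_p)}$ is immediate from equation~\eqref{restrsln} by evaluating the algebraic representations at $\ov\F_p$.

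For the second equality, my plan is to reduce to a single claim: the restriction of the irreducible algebraic $\GL_n$-representation $F_{\GL_n}(\mu)$ to the abstract group $\SL_n(\ov\F_p)$ remains irreducible. Once this is established, the inclusion $F_{\GL_n}(\mu) \subset W_{\GL_n}(\mu)_{\ov\F_p}$ together with the first equality exhibits $F_{\GL_n}(\mu)_{|\SL_n(\ov\F_p)}$ as a nonzero irreducible $\SL_n(\ov\F_p)$-submodule of $W_{\SL_n}(\ov\mu)_{\ov\F_p}$. It is therefore contained in the socle, which by definition is $F_{\SL_n}(\ov\mu)$; since this socle is itself irreducible and the inclusion is nonzero, equality follows.

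To prove the irreducibility claim, I would use the decomposition of $\GL_n$ as an almost direct product of its center and derived subgroup: let $Z \subset \GL_n$ denote the central subgroup of scalar matrices, which is isomorphic to $\bG_m$. Since $\ov\F_p^\times$ is $n$-divisible, at the level of $\ov\F_p$-points we have
\[
\GL_n(\ov\F_p) = Z(\ov\F_p) \cdot \SL_n(\ov\F_p).
\]
By Schur's lemma applied to the algebraically irreducible representation $F_{\GL_n}(\mu)$, the central torus $Z$ acts through a single character. Consequently, any $\SL_n(\ov\F_p)$-stable subspace of $F_{\GL_n}(\mu)$ is automatically $\GL_n(\ov\F_p)$-stable. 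By Zariski density of $\GL_n(\ov\F_p)$ in the connected group $\GL_n$, such a subspace is then an algebraic subrepresentation of $F_{\GL_n}(\mu)$, hence zero or the full space by irreducibility.

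The only real content is this irreducibility step; the matching of socles in the previous paragraph is essentially bookkeeping. I do not expect any serious obstacle beyond invoking Schur's lemma, the $n$-divisibility of $\ov\F_p^\times$, and Zariski density.
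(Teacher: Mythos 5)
Your proof is correct. The paper offers no argument beyond deriving the lemma directly from equation~(\ref{restrsln}), and your proposal supplies the standard missing details in the natural way: the first equality by evaluation at $\ov\F_p$, and the second by showing $F_{\GL_n}(\mu)$ stays irreducible on restriction to $\SL_n(\ov\F_p)$ via the decomposition $\GL_n(\ov\F_p)=Z(\ov\F_p)\cdot\SL_n(\ov\F_p)$, Schur's lemma, and Zariski density, then identifying it with the simple socle $F_{\SL_n}(\ov\mu)$ of $W_{\SL_n}(\ov\mu)_{\ov\F_p}$.
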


We denote by $X_s(T)$ the set of $p^s$-restricted weights. It is the set
$$
\{\mu \in X^*(T), 0 \leq \langle \mu,\alpha^\vee \rangle < p^s\text{ for all
simple roots }\alpha\}
$$

So for $G = \GL_n$, $X_s(T_{\GL_n})$ is the set of $n$-uples $(\mu_1,\dots,\mu_n) \in
\Z^n$ with $\mu_1 \geq \dots \geq \mu_n$, and $\mu_i - \mu_{i+1} < p^s$
for all $i$. For $G = \SL_n$, $X_s(T_{\SL_n})$ is the image in
$X^*(T_{\SL_n})$ of $X_s(T_{\GL_n})$.

Then we have
the following result (the case of $\SL_n$ follows from \cite{Jan}, which
proves the result for any semisimple group, and the case of $\GL_n$
follows from \cite[Theorem 3.10]{Her}):

\begin{theo}
\label{descrirr}
Assume that $G = \GL_n$ or $G = \SL_n$. Then we obtain all the
irreducible representations of $G(\F_{p^s})$ as follows: let $\mu \in
X_s(T)$, and take the restriction of $F_G(\mu)$ to $G(\F_{p^s})$, then
this is irreducible, and any irreducible representation of $G(\F_{p^s})$ is
isomorphic to exactly one of these representations.
\end{theo}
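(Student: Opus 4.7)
The plan is to handle the two cases $G = \SL_n$ and $G = \GL_n$ separately, invoking the two references cited in the statement of the theorem. No original argument is really needed; the task is just to check that the general results of Jantzen and Herzig specialize to the form stated here, using the set-up of Paragraph \ref{algreprGLSL} and Lemma \ref{restrbarFp}.

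For $G = \SL_n$, the group is semisimple and simply connected, so I would invoke the standard classification of simple $\ov\F_p$-modules of $\SL_n(\F_{p^s})$ described in \cite{Jan} (essentially Steinberg's theorem): every simple module is of the form $F_{\SL_n}(\mu)_{|\SL_n(\F_{p^s})}$ for a unique $\mu \in X_s(T_{\SL_n})$, and conversely every such restriction is simple. This yields the statement for $\SL_n$ verbatim.

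For $G = \GL_n$, the direct reference is \cite[Theorem 3.10]{Her}, which gives exactly the parametrization in the form stated. As a sanity check, irreducibility of $F_{\GL_n}(\mu)_{|\GL_n(\F_{p^s})}$ can be derived from the $\SL_n$ case using Lemma \ref{restrbarFp}: its restriction to $\SL_n(\F_{p^s})$ is $F_{\SL_n}(\ov\mu)_{|\SL_n(\F_{p^s})}$, which is already simple, so $F_{\GL_n}(\mu)_{|\GL_n(\F_{p^s})}$ itself is simple. Exhaustiveness and the distinctness clause then come from comparing simple $\SL_n(\F_{p^s})$-quotients together with the central character of $\GL_n(\F_{p^s})$, using that two weights $\mu,\mu' \in X_s(T_{\GL_n})$ give rise to isomorphic $\GL_n(\F_{p^s})$-representations exactly when $\ov\mu = \ov{\mu'}$ and $\mu_n \equiv \mu'_n \pmod{p^s-1}$, since $\det^{p^s-1}$ is trivial on $\GL_n(\F_{p^s})$.

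The main subtlety, and the place where one has to read the statement with care, is precisely this last point: $X_s(T_{\GL_n})$ is infinite (the coordinate $\mu_n$ is unbounded below), while $\GL_n(\F_{p^s})$ has only finitely many simple $\ov\F_p$-representations. So the uniqueness clause is only meaningful after passing to the quotient of $X_s(T_{\GL_n})$ by the lattice $(p^s-1)\Z\cdot(1,\dots,1)$; this is Herzig's formulation and what is implicitly meant here. Once this is accounted for, the proof reduces to a transparent citation of the two references.
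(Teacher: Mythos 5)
Your proof matches the paper's, which consists entirely of the citation of \cite{Jan} for $\SL_n$ (Steinberg's classification for semisimple groups) and of \cite[Theorem 3.10]{Her} for $\GL_n$; the extra sanity check via Lemma \ref{restrbarFp} is harmless and not needed. Your caveat about the uniqueness clause is well taken and correct: as the paper defines it, $X_s(T_{\GL_n})$ is infinite (only the successive differences $\mu_i-\mu_{i+1}$ are constrained, not the overall shift), so ``exactly one'' can only be meant modulo the lattice $(p^s-1)\Z\cdot(1,\dots,1)$, which is Herzig's actual formulation — a genuine, if harmless, imprecision in the theorem as stated.
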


From this and from Lemma \ref{restrbarFp}, we get Theorem
\ref{serreweights}.

\subsection{Types and the local Langlands correspondence}
\label{deftypesn}

Our goal is to define types for $\SL_n$ in the context of the
Breuil-Mézard conjecture. A clean definition would rest on an inertial
Langlands correspondence for $\SL_n$ which is not known at present. So we
give a construction relying on the results for $\GL_n$.

\subsubsection{Types for $\GL_n$}

Denote by $\text{rec}_p$ the local Langlands correspondence for $\GL_n$:
if $\pi$ is a smooth irreducible admissible representation of $\GL_n(K)$,
then $\text{rec}_p(\pi)$ is a Weil-Deligne representation of $W_K$.

We recall the following result regarding types for $\GL_n$ (see
\cite[Theorem 3.7]{CEG}:

\begin{defi}
\label{typeLanglands}
Let $t$ be an inertial type for $\GL_n$. We say that a smooth,
irreducible, finite dimensional representation $\sigma(t)$ of
$\GL_n(\O_K)$ over $\ov\Q_p$ is a type for $t$ if it satisfies the
following property:
let $\tilde{t}$
be a Frobenius semi-simple Weil-Deligne representation of $W_K$.
Then
$\tilde{t}_{|I_K}$ is isomorphic to $t$ and has $N=0$
if and only if the restriction to
$\GL_n(\O_K)$ of $\text{rec}_p^{-1}(\tilde{t})\otimes |\det|^{(n-1)/2}$
contains a copy of $\sigma(t)$.
\end{defi}

\begin{theo}
\label{typeexists}
For any inertial type $t$ for $\GL_n$, there exists a type $\sigma(t)$
for $t$.
\end{theo}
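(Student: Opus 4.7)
The plan is to invoke the theory of Bushnell--Kutzko types combined with the refinement of Schneider--Zink (with complements by Paskunas) that realises the required type at the level of the maximal compact subgroup $\GL_n(\O_K)$.

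First, by the Bernstein decomposition, the category of smooth admissible $\Qpb$-representations of $\GL_n(K)$ splits as a product of blocks indexed by inertial equivalence classes of cuspidal supports. Via the normalised local Langlands correspondence $\text{rec}_p \otimes |\det|^{(n-1)/2}$, these blocks are in bijection with isomorphism classes of inertial types $t$ for $\GL_n$ in the sense of Definition \ref{definertialabs}: two Frobenius semi-simple Weil representations of $W_K$ have isomorphic restrictions to $I_K$ if and only if the associated smooth irreducible representations $\pi$ of $\GL_n(K)$ lie in the same block. Within a fixed block, the condition $N=0$ on the Weil--Deligne parameter singles out precisely those $\pi$ that are obtained as full parabolic induction of an unramified twist of a representative of the cuspidal support.

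Second, for each block Bushnell--Kutzko construct an explicit type $(\mathcal{J},\lambda)$, with $\mathcal{J} \subset \GL_n(\O_K)$ compact open and $\lambda$ an irreducible smooth $\mathcal{J}$-representation, characterised by the property that a smooth irreducible $\pi$ lies in the block if and only if $\lambda$ occurs in $\pi_{|\mathcal{J}}$. Schneider--Zink (with extensions by Paskunas covering the remaining non-cuspidal-support cases) then isolate inside $\ind_{\mathcal{J}}^{\GL_n(\O_K)} \lambda$ a distinguished irreducible constituent $\sigma(t)$ satisfying the stronger property that $\sigma(t)$ occurs in $\pi_{|\GL_n(\O_K)}$ if and only if $\pi$ lies in the block \emph{and} its Weil--Deligne parameter has $N=0$. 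Unwinding the normalising twist by $|\det|^{(n-1)/2}$, this is exactly the condition of Definition \ref{typeLanglands}.

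The main obstacle is this last point: producing a single irreducible representation of $\GL_n(\O_K)$, rather than of the smaller $\mathcal{J}$, that simultaneously detects the inertial class $t$ and the triviality of the monodromy operator $N$. A Bushnell--Kutzko type on $\mathcal{J}$ only controls the Bernstein block, and upon inducing to $\GL_n(\O_K)$ the various irreducible constituents of $\ind_{\mathcal{J}}^{\GL_n(\O_K)} \lambda$ a priori correspond to different Weil--Deligne refinements within the block; pinning down the correct constituent $\sigma(t)$ with the $N=0$-detection property is the nontrivial content of the Schneider--Zink construction. Granting these inputs, the theorem follows.
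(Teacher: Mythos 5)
Your proposal is correct and matches the paper's treatment: the paper does not prove this statement itself but quotes it directly from \cite[Theorem 3.7]{CEG}, whose proof is exactly the Bushnell--Kutzko/Bernstein-block argument refined by Schneider--Zink (with Pa\v{s}k\=unas's complements) that you outline, including the key point of isolating the single irreducible constituent of $\ind_{\mathcal{J}}^{\GL_n(\O_K)}\lambda$ that detects $N=0$. The only minor imprecision is describing the $N=0$ representations as full parabolic inductions rather than Langlands quotients (isobaric sums), but you correctly flag the relevant subtlety as the content of the Schneider--Zink construction.
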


Uniqueness is not known in general (but it is known for $n=2$, see
\cite[Appendix]{BM}).

We denote by $\sgl(t)$ a choice of type for each inertial type $t$ for
$\GL_n$.
By the compatibility of the local Langlands
correspondence with twist by characters, we can assume moreover the
following property:
Let $\alpha$ be a smooth character $I_K \to \ov\Q_p^\times$. 
Then $\sgl(t \otimes \alpha) = \sgl(t) \otimes \left(\alpha \circ
\det\right)$, where
we see $\alpha$ as a character of $\O_K^\times$ via local class field
theory.

\subsubsection{Types modulo $p$ for $\SL_n$}

Let $\tau: I_K \to \PGL_n(\ov\Q_p)$ be an inertial type. Our goal is to
attach to $\tau$ a smooth, semi-simple representation $\ov{\ssl(\tau)}$ of $\SL_n(\O_K)$
with coefficients in $\ov\F_p$.

From the choice of $\sgl$ we deduce:

\begin{lemm}
Let $\tau : I_K \to \PGL_n(\ov\Q_p)$ be an inertial type. Then 
$\sgl(t)_{|\SL_n(\O_K)}$ does not depend on the choice of the inertial
type $t$ for $\GL_n$ lifting $\tau$.
\end{lemm}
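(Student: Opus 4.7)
The plan is to reduce the question to the twist-compatibility property already built into the choice of $\sgl$, together with the elementary fact that $\det$ is trivial on $\SL_n(\O_K)$.

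First, I would show that any two lifts $t,t'$ of $\tau$ to inertial types for $\GL_n$ differ by a smooth character twist. By definition of ``lifting'', both $u\circ t$ and $u\circ t'$ represent the same $\PGL_n(\ov\Q_p)$-conjugacy class $\tau$, so after replacing $t'$ by a suitable $\GL_n(\ov\Q_p)$-conjugate I may assume $u\circ t = u\circ t'$ pointwise. This replacement preserves the isomorphism class of $\sgl(t')$ and hence of its restriction to $\SL_n(\O_K)$. Since $\ker u$ consists of the scalar matrices, the identity $u\circ t = u\circ t'$ yields a unique function $\alpha : I_K \to \ov\Q_p^\times$ with $t'(g) = \alpha(g)\cdot t(g)$ for all $g\in I_K$. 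Multiplicativity of $t,t'$ forces $\alpha$ to be a character, and the open-kernel condition on $t,t'$ makes $\alpha$ smooth. Thus $t' \cong t\otimes\alpha$ as inertial types for $\GL_n$.

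Next, I would apply the compatibility property imposed on $\sgl$ in the preceding paragraph: this gives
\[
\sgl(t') \;=\; \sgl(t\otimes\alpha) \;=\; \sgl(t)\otimes(\alpha\circ\det),
\]
where $\alpha$ is identified with a character of $\O_K^\times$ via local class field theory and $\alpha\circ\det$ is the resulting smooth character of $\GL_n(\O_K)$.

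Finally, I would restrict both sides to $\SL_n(\O_K)$. Since $\det(g)=1$ for every $g\in\SL_n(\O_K)$, the character $\alpha\circ\det$ is trivial on $\SL_n(\O_K)$, whence
\[
\sgl(t')_{|\SL_n(\O_K)} \;\cong\; \sgl(t)_{|\SL_n(\O_K)},
\]
proving the independence. There is no real obstacle in this argument: the only substantive step is the twist relation $t' \cong t\otimes\alpha$, which is immediate from the identification $\PGL_n = \GL_n/Z$; the rest is a direct application of the assumed compatibility of $\sgl$ and the triviality of the determinant on $\SL_n$.
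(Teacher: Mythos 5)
Your argument is correct and is exactly the deduction the paper has in mind (the paper states the lemma with no proof beyond ``From the choice of $\sgl$ we deduce''): two lifts of $\tau$ differ by a smooth character twist $t'\cong t\otimes\alpha$, the imposed compatibility gives $\sgl(t\otimes\alpha)=\sgl(t)\otimes(\alpha\circ\det)$, and $\alpha\circ\det$ is trivial on $\SL_n(\O_K)$. The preliminary conjugation step and the verification that $\alpha$ is a smooth character are fine.
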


Let $g$ be a element of $\GL_n(\O_K) \setminus \O_K^\times\SL_n(\O_K)$.
If $\sigma$ is a representation of $\SL_n(\O_K)$, we denote by $\sigma^g$
the representation conjugate by $g$, that is, the representation
$x \mapsto \sigma(gxg^{-1})$.

We have the following result:

\begin{prop}
\label{dectype}
Let $t$ be an inertial type for $\GL_n$, and $\tau$ the corresponding
inertial type for $\PGL_n$.

Then
$\sgl(t)_{|\SL_n(\O_K)}$ is a direct sum of copies of $M$ distinct irreducible representations
$\pi_1,\dots,\pi_M$ of $\SL_n(\O_K)$, with $\pi_i$ appearing with
the multiplicity $m_i$, satisfying the following
properties:

\begin{enumerate}
\item
all the $m_i$ are equal to some value $m$.

\item $Mm^2 = \# \Gg_I(\tau)$, where $\Gg_I(\tau)$ is the group defined
in Paragraph \ref{twists}.

\item the total number of representations that appear, counting
multiplicities, is $Mm = \# \Gg_I(\tau)/m$.

\item for all $i$, there exists $g_i \in \GL_n(\O_K)$ such that $\pi_i
= \pi_1^{g_i}$.

\item the representations $\ov{\pi_i}^{ss}$ are all isomorphic.

\item
write the decomposition of $\ov{\pi_i}^{ss}$ as a sum of irreducible
representations as $\sum_{\sigma\in \Sigma} n_\sigma [\sigma]$. 
Then $m \mid n_\sigma$ for all $\sigma \in \Sigma$.
\end{enumerate}
\end{prop}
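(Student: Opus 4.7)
The plan is to apply standard (smooth) Clifford theory to the restriction of the irreducible finite-dimensional $\GL_n(\O_K)$-representation $V = \sgl(t)$ along the normal subgroup $H = \SL_n(\O_K)$, with quotient $Q = \GL_n(\O_K)/\SL_n(\O_K) = \O_K^\times$ via the determinant. Since $V$ is smooth, its action factors through a finite quotient of $\GL_n(\O_K)$, so classical Clifford theory applies and gives that $V|_H$ is a direct sum of $\GL_n(\O_K)$-translates $\pi_i = \pi_1^{g_i}$ of a single irreducible $H$-representation $\pi_1$, all appearing with a common multiplicity $m$; the $M$ isomorphism classes form the $\GL_n(\O_K)$-orbit of $\pi_1$. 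This yields (1), (2) and (5).

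For the key numerical identity (3), I would compute $\dim\End_H(V)$ in two ways. On one hand, from $V|_H = m\bigoplus_{i=1}^M\pi_i$ and Schur's lemma, $\dim\End_H(V) = Mm^2$. On the other hand, $\End_H(V) = (V\otimes V^*)^H$ is a representation of $Q$, and decomposing into characters yields
\[
\dim (V\otimes V^*)^H = \sum_\chi \dim \Hom_G(V, V\otimes\chi),
\]
where $\chi$ runs over smooth characters of $Q$, viewed as characters of $G = \GL_n(\O_K)$ via $\det$. By Schur's lemma each term is $0$ or $1$, and equals $1$ exactly when $V\otimes\chi\cong V$. Via local class field theory and the compatibility $\sgl(t\otimes\alpha) = \sgl(t)\otimes(\alpha\circ\det)$ of the chosen types, together with the fact (which follows from Definition \ref{typeLanglands}) that a type determines the inertial equivalence class of $t$, this occurs exactly when the character $\alpha: I_K\to \ov\Q_p^\times$ corresponding to $\chi$ lies in $\Gg_I(\tau)$. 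Hence $Mm^2 = |\Gg_I(\tau)|$, which gives (3); and (4) is the immediate algebraic consequence $Mm = |\Gg_I(\tau)|/m$.

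For (6), use (5): $\ov{\pi_i}^{ss} = (\ov{\pi_1}^{ss})^{g_i}$. Since the kernel of $\SL_n(\O_K)\to\SL_n(\F_q)$ is pro-$p$, every irreducible $\ov\F_p$-constituent of $\ov{\pi_1}^{ss}$ is inflated from an irreducible representation $\sigma\in\Sigma$ of $\SL_n(\F_q)$. By Theorem \ref{serreweights}, any such $\sigma$ is the restriction of an irreducible representation of $\GL_n(\F_q)$, so conjugation by $g_i\in\GL_n(\O_K)$ (which acts through its image in $\GL_n(\F_q)$ by an inner automorphism) fixes the isomorphism class of $\sigma$. Therefore the $\ov{\pi_i}^{ss}$ all coincide.

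Finally, for (7), count the multiplicity of a fixed $\sigma\in\Sigma$ in $\ov{V}^{ss}|_H$ in two ways. From (1) and (6) this multiplicity equals $mMn_\sigma$. Writing $\ov{V}^{ss} = \sum_{s\in\s}N_s[s]$ and using the surjection $r:\s\to\Sigma$ of Theorem \ref{serreweights}, it also equals $\sum_{s\in r^{-1}(\sigma)}N_s$. The isomorphisms $V\otimes\chi\cong V$ for $\chi$ coming from $\Gg_I(\tau)$ reduce to isomorphisms $\ov{V}^{ss}\otimes\ov\chi\cong \ov{V}^{ss}$, so $N_s$ is constant along $\Gg_I(\tau)$-orbits. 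Since $|\Gg_I(\tau)|$ is prime to $p$, reduction identifies $\Gg_I(\tau)$ with a subgroup of $\Hom(\F_q^\times,\ov\F_p^\times)$, which acts simply transitively on $r^{-1}(\sigma)$; thus every $\Gg_I(\tau)$-orbit in $r^{-1}(\sigma)$ has size exactly $|\Gg_I(\tau)| = Mm^2$. Summing, $\sum_{s\in r^{-1}(\sigma)}N_s = Mm^2\cdot\sum_{O}N_{s_O}$, and comparison with $mMn_\sigma$ gives $n_\sigma = m\sum_O N_{s_O}$, divisible by $m$. The most delicate point of the whole argument is the identification in (3) of the characters $\chi$ fixing $V$ with $\Gg_I(\tau)$: this rests on recovering the inertial equivalence class of $t$ from the isomorphism class of $\sgl(t)$ as a $\GL_n(\O_K)$-representation via Definition \ref{typeLanglands}, and must be done without using the (not known in general) uniqueness of $\sgl(t)$.
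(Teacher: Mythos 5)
Your argument is correct, and for the structural statements it follows the same route as the paper: the paper invokes \cite[Lemma 2.1]{GK} (which is exactly the Clifford-theoretic computation of $\dim\End_{\SL_n(\O_K)}(\sgl(t))$ in two ways that you carry out by hand), identifies the twist-stabilizer $X(\sigma)$ with $\Gg_I(\tau)$ via local class field theory and the normalization $\sgl(t\otimes\alpha)=\sgl(t)\otimes(\alpha\circ\det)$, and proves that the $\ov{\pi_i}^{ss}$ coincide by the same observation that irreducible representations of $\SL_n(\F_q)$ extend to $\GL_n(\F_q)$ and are therefore fixed by $\GL_n$-conjugation. You are right to flag that the identification of $\{\chi:\sgl(t)\otimes\chi\cong\sgl(t)\}$ with $\Gg_I(\tau)$ needs the type to determine the inertial class of $t$ independently of any uniqueness of $\sgl(t)$; the paper asserts this correspondence without comment, and your appeal to Definition \ref{typeLanglands} supplies the justification. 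Where you genuinely diverge is item (6): the paper defers it to Proposition \ref{integred}, where the divisibility falls out of the comparison of the multiplicities $\alpha_{\lambda,\tau}(\sigma)$ with $\sum_\mu a_{\ell,t\otimes\mu}(s)$ specialized to $\lambda=0$. Your double count of the multiplicity of $\sigma$ in $\ov{\sgl(t)}^{ss}|_{\SL_n(\F_q)}$ --- namely $mMn_\sigma=\sum_{s\in r^{-1}(\sigma)}N_s$, with $N_s$ constant on the orbits of the (injectively reduced, since $p\nmid n$) group $\Gg_I(\tau)$ acting freely on the torsor $r^{-1}(\sigma)$ with orbits of size $Mm^2$ --- is a correct, self-contained alternative; it has the advantage of making the proof of the present proposition independent of the later one, whereas the paper's arrangement makes item (6) logically rest on Proposition \ref{integred}. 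One cosmetic caveat: your item labels are shifted relative to the statement (your ``(3)'' is the proposition's (2), your ``(7)'' is its (6), and the claim ``this yields (1), (2) and (5)'' after the Clifford-theory paragraph should read that it yields the decomposition together with items (1) and (4)); this should be fixed so the reader can match claims to proofs, but the mathematics is complete.
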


By $\ov{\pi}^{ss}$ we denote the semi-simplification of the reduction
modulo $p$ of $\pi$.

\begin{proof}
Properties (1) to (4) are a consequence of \cite[Lemma 2.1]{GK} applied
to $G = \GL_n(\O_K)$, and $H = \O_K^\times\SL_n(\O_K)$ and the
irreducible smooth representation $\sigma = \sgl(t)$ of $\GL_n(\O_K)$
(although our case is simpler, as we are actually dealing with
representations of finite groups).

The set denoted by $X(\sigma)$ in \cite[Lemma 2.1]{GK} of smooth
characters $\alpha$ of $\GL_n(\O_K)$ such that $\sigma \simeq \sigma
\otimes (\alpha \circ \det)$ corresponds via local class field theory to the
set of smooth characters of inertia $\alpha$ such that $t \simeq t
\otimes \alpha$, that is, to $\Gg_I(\tau)$. 
 
Let us prove (5).
Note that $\ov{\pi_i}^{ss} = \ov{\pi_1^{g_i}}^{ss}$. But any irreducible
representation of $\SL_n(\F_q)$ extends to a representation of
$\GL_n(\F_q)$ by Theorem \ref{serreweights}, so two semi-simple
representations of $\SL_n(\F_q)$ that are conjugate by $g_i \in
\GL_n(\F_q)$ are isomorphic.

We defer the proof of (6) to Proposition \ref{integred}.
\end{proof}

%

\begin{rema}
\label{multfree}
Note that when $\gcd(n,q-1)$ is square-free, we have $m=1$ and the total
number of representations that appear, counting multiplicities, is $\#
\Gg_I(\tau)$ in the proposition above. Indeed, $\# \Gg_I(\tau)$ divides 
$\# \Gg_I$ which is equal to $\gcd(n,q-1)$ as explained in Paragraph
\ref{twists}.
\end{rema}

\begin{defi}
\label{typesln}
Let $\tau: I_K \to \PGL_n(\ov\Q_p)$ be an inertial type. We define the
representation $\ov{\ssl(\tau)}$ of $\SL_n(\O_K)$ as follows: let $t :
I_K \to \GL_n(\ov\Q_p)$ be an inertial type lifting $\tau$.
Let $\pi$ be one of the irreducible components of
$\sgl(t)_{|\SL_n(\O_K)}$.

We set $\ov{\ssl(\tau)} = \frac{1}{m}\ov{\pi}^{ss}$, where $m$ is as in Proposition
\ref{dectype}.
\end{defi}

\begin{rema}
\label{typemodp}
It follows from Proposition \ref{dectype} that the isomorphism class of
$\ov{\ssl(\tau)}$ is canonically defined up to the choice of $\sgl$, 
even though we had to make a
choice in the definition, and moreover it is really a representation, and
not only a rational element in the Grothendieck group of representations
of $\SL_n(\O_K)$.

For these reasons we choose to define only a representation $\ov{\ssl(\tau)}$
in characteristic $p$, and we will not define a representation 
$\ssl(\tau)$ in characteristic $0$. 
\end{rema}

\subsection{Multiplicities}

\subsubsection{Representations coming from the Hodge type}
\label{repHT}

We construct a representation of $\GL_n(\O_K)$ (resp. $\SL_n(\O_K)$)
with coefficients in $\O_E$ for some finite extension $E$ of $\Q_p$
containing all conjugates of $K$,
from a regular Hodge type $\ell$ for $\GL_n$ (resp. for
$\PGL_n)$).

We use the notation of Paragraph \ref{algreprGLSL}.

Denote by $\delta$ the element $(n-1,n-2,\dots,0) \in X^*(T_{\GL_n})$,
and $\ov\delta$ its image in $X^*(T_{\SL_n})$. We make the following
observation: if $\mu \in X^*(T_{\GL_n})$ is dominant and regular,
then $\mu-\delta$ is dominant, and similarly for $\SL_n$.

The construction for $\GL_n$ is as follows: the regular
Hodge type $\ell$ is an element of $X^*(T_{\GL_n})_+^\e$ such that for
all $\iota\in\e$, $\ell_\iota$ is regular. For each
$\iota \in \e$, consider the algebraic representation
$W_{\GL_n}(\ell_\iota-\delta)$. Then we can evaluate it at $\O_K$ to get a
finite-rank representation $M_{\ell,\iota}$ of $\GL_n(\O_K)$. Finally we
set $\sgl(\ell) = \otimes_{\iota\in \e}
\left(M_{\ell,\iota}\otimes_{\O_K,\iota}\O_E\right)$,
seen as a representation of $\GL_n(\O_K)$.

The construction for $\SL_n$ is the same, the Hodge type $\lambda$
for $\PGL_n$ being an element of $X^*(T_{\SL_n})_+^\e$. We denote by
$\ssl(\lambda)$ the representation of $\SL_n(\O_K)$ that we obtain.

Observe that by construction, $\ssl(\lambda)$ is the restriction to
$\SL_n(\O_K)$ of $\sgl(\ell)$ if $\ell$ lifts $\lambda$.

The construction for $\GL_n$ coincides with the one in \cite[\S 4]{EG14}
(note that we have different conventions for the Hodge-Tate weights).

\subsubsection{Multiplicities coming from the Hodge type and inertial type}
\label{BMmult}

Let $\lambda$ be a Hodge type
and $\tau$ be an inertial type for $\PGL_n$.
We define a family $(\alpha_{\lambda,\tau}(\sigma))_{\sigma\in\Sigma}$ of
non-negative rational numbers as follows:  Consider the
representation $\ov{\ssl(\tau)} \otimes \ov{\ssl(\lambda)}$ of $\SL_n(\O_K)$. Then
its semi-simplification factors as a
finite-dimensional representation of $\SL_n(\F_q)$, which can we written
as
$$
\left(\ov{\ssl(\tau)}\otimes\ov{\ssl(\lambda)}\right)^{ss} = \sum_{\sigma\in
\Sigma}\alpha_{\lambda,\tau}(\sigma)[\sigma].
$$
We will see in Proposition \ref{integred} that the
$(\alpha_{\lambda,\tau}(\sigma))_{\sigma\in\Sigma}$ are actually
integers.

Let $t$ be an inertial type for $\GL_n$, and $\ell$ a Hodge type for
$\GL_n$.  Define the family
$(a_{\ell,t}(s))_{s\in\s}$ of
non-negative integers by the formula:
$$
\ov{\sgl(t)\otimes\sgl(\ell)}^{ss} = \sum_{s\in
\s}a_{\ell,t}(s)[s]
$$

The key result is the following:

\begin{prop}
\label{integred}
Let $t$ be an inertial type for $\GL_n$ lifting $\tau$, and $\ell$ a Hodge type for
$\GL_n$ lifting $\lambda$.  

Let $\s(\lambda,\tau)$ be the subset of $\s$ of representations that have the
same central character as $\ov{\sgl(t)\otimes\sgl(\ell)}^{ss}$ (this does
not depend on the choice of $t$ or $\ell$). Then
\begin{enumerate}

\item
$a_{\ell,t\otimes\mu}(s) = 0$ for any $s$ not in $\s(\lambda,\tau)$ and
for any $\mu \in \Gg_I$.

\item
If $s$ and $s'$ have the same restriction to
$\SL_n(\F_q)$ and the same central character then
$
\sum_{\mu \in \Gg_I/\Gg_I(\tau)}
a_{\ell,t\otimes\mu}(s)
=
\sum_{\mu \in \Gg_I/\Gg_I(\tau)}
a_{\ell,t\otimes\mu}(s')
$.

\item
$
\alpha_{\lambda,\tau}(\sigma)
= 
\sum_{\mu \in \Gg_I/\Gg_I(\tau)}
a_{\ell,t\otimes\mu}(s)$ for any 
$s \in \s(\lambda,\tau)$ restricting to $\sigma$.

\end{enumerate}

In particular, 
the $\alpha_{\lambda,\tau}(\sigma)$ are in $\Z_{\geq 0}$, and 
$\ov{\ssl(\tau)}$ is integral in the Grothendieck group of
representations of 
$\SL_n(\O_K)$.
\end{prop}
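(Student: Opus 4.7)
My plan is to reduce all three parts to a single averaged identity obtained by restricting to $\SL_n(\O_K)$, combining the decomposition of $\sgl(t)_{|\SL_n(\O_K)}$ given by Proposition \ref{dectype} with the equality $\sgl(\ell)_{|\SL_n(\O_K)} = \ssl(\lambda)$ from the construction of Paragraph \ref{repHT}. Throughout I would use heavily the twist compatibility $\sgl(t\otimes\mu) = \sgl(t) \otimes (\mu \circ \det)$ of the chosen type.

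Part (1) follows from a central character computation: since $\mu^n = 1$ and $\det(zI) = z^n$, the character $\mu\circ\det$ is trivial on $\F_q^\times \subset \GL_n(\F_q)$, so all $\ov{\sgl(t\otimes\mu)\otimes\sgl(\ell)}^{ss}$ share a common central character; replacing $\ell$ by another lift of $\lambda$ or $t$ by another lift of $\tau$ only alters this character by a twist trivial on the classes of Serre weights that actually occur, giving both the well-definedness of $\s(\lambda,\tau)$ and the vanishing in (1). Part (2) is a twist-reindexing argument: if $s' = s\otimes\det^k$ has the same central character as $s$, the condition $(q-1)\mid kn$ together with local class field theory puts the character $\alpha_{-k}$ of $I_K$ matching $\det^{-k}$ into $\Gg_I$; applying the twist compatibility once more gives
$$
a_{\ell,t\otimes\mu}(s\otimes\det^k) = a_{\ell,t\otimes\mu\alpha_{-k}}(s),
$$
and summing over $\mu\in\Gg_I/\Gg_I(\tau)$, the translation $\mu\mapsto\mu\alpha_{-k}$ is a bijection of this quotient, which yields (2).

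For (3), I would first assemble the global identity. Since $\mu \circ \det$ is trivial on $\SL_n(\O_K)$, $\sgl(t\otimes\mu)_{|\SL_n(\O_K)}$ does not depend on $\mu$, and Proposition \ref{dectype}(1)(2)(5) combined with the definition $\ov{\ssl(\tau)} = (1/m)\ov{\pi_1}^{ss}$ yields $\ov{\sgl(t\otimes\mu)}^{ss}_{|\SL_n(\F_q)} = Mm^2\cdot\ov{\ssl(\tau)} = \#\Gg_I(\tau)\cdot\ov{\ssl(\tau)}$. Tensoring with $\ssl(\lambda)$ and summing over $\mu\in\Gg_I/\Gg_I(\tau)$ gives
$$
\sum_{\mu\in\Gg_I/\Gg_I(\tau)} \ov{\sgl(t\otimes\mu)\otimes\sgl(\ell)}^{ss}_{|\SL_n(\F_q)} = \#\Gg_I\cdot\ov{\ssl(\tau)\otimes\ssl(\lambda)}^{ss}.
$$
Extracting the coefficient of $[\sigma]$, the right side is $\#\Gg_I\cdot\alpha_{\lambda,\tau}(\sigma)$; on the left, Theorem \ref{serreweights} identifies the fibre of $\s\to\Sigma$ over $\sigma$ with the $q-1$ twists $\{s_0\otimes\det^k\}$. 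Part (1) restricts $k$ to the $\gcd(n,q-1) = \#\Gg_I$ values preserving the central character, and (2) shows that each surviving $k$ contributes the same value $\sum_\mu a_{\ell,t\otimes\mu}(s_0)$; comparing and cancelling $\#\Gg_I$ gives (3), from which the integrality of $\alpha_{\lambda,\tau}(\sigma)$ and of $\ov{\ssl(\tau)}$ in the Grothendieck group follow at once. The main obstacle is precisely this final arithmetic bookkeeping — verifying that the number of twists by $\det^k$ surviving the central-character constraint equals $\gcd(n,q-1) = \#\Gg_I$ — which is what promotes the averaged restriction identity into the pointwise formula (3).
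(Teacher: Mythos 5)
Your proof is correct and follows essentially the same route as the paper's: part (1) by the triviality of $\mu\circ\det$ on the center, part (2) by reindexing the sum over $\Gg_I/\Gg_I(\tau)$ under the translation corresponding to $\det^{-k}$ (the paper does the same computation with an explicit generator of $\Gg_I$), and part (3) by computing $\bigoplus_\mu \ov{\sgl(t\otimes\mu)\otimes\sgl(\ell)}^{ss}\big|_{\SL_n(\F_q)}$ in two ways, matching the fibre count $g=\#\Gg_I$ from Theorem \ref{serreweights} against the factor $Mm^2=\#\Gg_I(\tau)$ from Proposition \ref{dectype}. The only point treated more loosely than it deserves is the parenthetical independence of $\s(\lambda,\tau)$ from the choice of $\ell$, but the paper does not argue this either, and it does not affect the three numbered assertions, which are stated for a fixed $\ell$.
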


Recall that $\Gg_I$ and $\Gg_I(\tau)$ are defined in Paragraph
\ref{twists}. The last statement of this Proposition is statement (6) of
Proposition \ref{dectype}.

\begin{proof}
(1) is clear.

Let us prove (2). Observe first that the isomorphism class of
$t\otimes\mu$ for $\mu\in\Gg_I$ depends only on the class of $\mu$ in 
$\Gg_I/\Gg_I(\tau)$, so $a_{\ell,t\otimes\mu}(s)$ is well-defined for
$\mu\in\Gg_I/\Gg_I(\tau)$.

Let $g = \gcd(q-1,n)$ be the cardinality of $\Gg_I$, and let $h$ be the
cardinality of $\Gg_I(\tau)$.
Let $\alpha_i$ the character $\O_K^\times \to \ov\Q_p^\times$ defined by the
composition of the map $\O_K^\times \to \mu_{q-1}(K)$ and the map $x
\mapsto x^{i(q-1)/g}$. We also denote by $\alpha_i$ the corresponding character of
the inertia subgroup $I_K$ given by local class field theory.  Then
$\alpha_1$ generates $\Gg_I$ and $\alpha_{g/h}$ generates $\Gg_I(\tau)$.

So $a_{\ell,t\otimes\alpha_i}(s)$ depends only on $i$ modulo $g/h$ as $t
\otimes \alpha_{g/h}$ is isomorphic to $t$,
and we can write
$
\sum_{\mu \in \Gg_I/\Gg_I(\tau)}
a_{\ell,t\otimes\mu}(s)
$
as
$
\sum_{i=0}^{g/h-1}
a_{\ell,t\otimes\alpha_i}(s)
$.

Now, we can write $s'$ as $s \otimes \left(\alpha_j\circ \det\right)$ for some $j$.
Then
$$
\sum_{\mu \in \Gg_I/\Gg_I(\tau)}
a_{\ell,t\otimes\mu}(s')
=
\sum_{i=0}^{g/h-1}
a_{\ell,t\otimes\alpha_i}(s\otimes \alpha_j\circ \det)
=
\sum_{i=0}^{g/h-1}
a_{\ell,t\otimes\alpha_{i-j}}(s)
=
\sum_{i=0}^{g/h-1}
a_{\ell,t\otimes\alpha_i}(s)
$$
which is what we wanted.

Let us prove (3).

As representation of $\GL_n(\F_q)$, we have
$$
\oplus_{\mu \in \Gg_I/\Gg_I(\tau)}
\ov{\sgl(t\otimes\mu)\otimes\sgl(\ell)}^{ss}
=
\sum_{s\in\s(\lambda,\tau)}
\sum_{\mu \in \Gg_I/\Gg_I(\tau)}
a_{\ell,t\otimes\mu}(s)[s]
$$
We restrict this to $\SL_n(\F_q)$, then the coefficient appearing before
$[\sigma]$ for $\sigma\in \Sigma$
is
$
\sum_{s\in\s(\lambda,\tau),s_{|\SL_n(\F_q)=\sigma}}
\sum_{\mu \in \Gg_I/\Gg_I(\tau)}
a_{\ell,t\otimes\mu}(s)
$
which is
$g\sum_{\mu \in \Gg_I/\Gg_I(\tau)}
a_{\ell,t\otimes\mu}(s)$ for any choice of 
$s \in \s(\lambda,\tau)$ restricting to $\sigma$ (as there are $g$ of
them).

On the other hand we compute directly
$
\oplus_{\mu \in \Gg_I/\Gg_I(\tau)}
\ov{\sgl(t\otimes\mu)\otimes\sgl(\ell)}^{ss}
$
as a representation of $\SL_n(\F_q)$.
In the notation of Proposition \ref{dectype},
recall that
$\sgl(t\otimes\mu)_{|\SL_n(\O_K)}$ does not depend on $\mu$ and is the sum of $h/m$ irreducible
representations $\pi_i$ of $\SL_n(\O_K)$ (not necessarily distinct), such that $\ov{\pi_i}^{ss}$
does not depend on $i$, and $\ov{\ssl(\tau)}$ is $(1/m)\ov{\pi_i}^{ss}$ for some $i$.
This means that
\begin{eqnarray*}
\oplus_{\mu \in \Gg_I/\Gg_I(\tau)}
\ov{\sgl(t\otimes\mu)\otimes\sgl(\ell)}^{ss}
&=&
(g/h)\ov{\oplus_{i=1}^{h/m}\pi_i\otimes\ssl(\lambda)}^{ss} \\
&=&
(g/h)(h/m)\ov{\pi_1\otimes\ssl(\lambda)}^{ss}\\
&=&
g\ov{\ssl(\tau)\otimes\ssl(\lambda)}^{ss}\\
&=& g\sum_{\sigma\in \Sigma}\alpha_{\lambda,\tau}(\sigma)[\sigma]
\end{eqnarray*}
Comparing the two results, we get what we wanted.

\medskip

Finally take $\lambda = 0$. We get that
$
\ov{\ssl(\tau)} = 
\sum_{\sigma\in \Sigma}\alpha_{0,\tau}(\sigma)[\sigma]
= 
\sum_{\sigma\in \Sigma}
\left(\sum_{\mu \in \Gg_I/\Gg_I(\tau)}
a_{0,t\otimes\mu}(s)\right)[\sigma]$ 
where for each $\sigma$ we choose any $s \in \s(\lambda,\tau)$
restricting to $\sigma$. So 
$ \ov{\ssl(\tau)}$ is an integral combination of the representations
$[\sigma]$ for $\sigma \in \Sigma$.
\end{proof}

\subsection{Geometric Breuil-Mézard in the potentially crystalline case}
\label{geomBM}

Fix a continuous representation $\ov\rho: \Gamma_K \to \PGL_n(\ov\F_p)$.
As before we denote by $R^\square(\ov\rho)$ the ring classifying framed
deformations of $\ov\rho$. 

Let $\ov{r} : \Gamma_K \to \GL_n(\ov\F_p)$ 
be a lift of $\ov\rho$.
In what follows we identify 
$\tilde{\X}(\ov{r})$ with $\X(\ov{\rho})$,
and also
$\tilde{X}(\ov{r})$ with $X(\ov{\rho})$
as in Paragraph \ref{cyclesglnpgln}.

\subsubsection{Conjecture}

We now state a Breuil-Mézard conjecture for representations with values
in $\PGL_n$.

\begin{conj}
\label{geomBMPGLnconj}
Let $n \geq 2$, and let $p$ be a prime number that does not divide $n$.
Let $\ov\rho : \Gamma_K \to \PGL_n(\ov\F_p)$ be a continuous
representation. 

Then there exist cycles $\cc_{\sigma}(\ov\rho)$ on $X(\ov\rho)$ such
that,
for any inertial type $\tau : I_K \to \PGL_n(\ov\Q_p)$ and for any
regular Hodge type $\lambda$ for $\PGL_n$,
we have:
$$
Z(R^{\square,cr}(\lambda,\tau,\ov\rho)/\varpi) =
\sum_{\sigma\in\Sigma}\alpha_{\lambda,\tau}(\sigma)\cc_{\sigma}(\ov\rho)
$$
as cycles on $X(\ov\rho)$, where the integers $\alpha_{\lambda,\tau}(\sigma)$ 
are defined as in Section \ref{BMmult}.
\end{conj}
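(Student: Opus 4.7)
The plan is to deduce Conjecture \ref{geomBMPGLnconj} from the analogous Breuil-M\'ezard conjecture for $\GL_n$ (assumed throughout), by combining Theorem \ref{cyclesdefrings} in its potentially crystalline form (Theorem \ref{compdefringscr}), the cycle decomposition posited by the $\GL_n$ conjecture, and the coefficient identity of Proposition \ref{integred}.

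Fix a lift $\ov{r_0}$ of $\ov\rho$, a lift $\ell$ of $\lambda$, a lift $t$ of $\tau$ to an inertial type for $\GL_n$, and a character $\psi$ compatible with $(\ell,t)$. Theorem \ref{cyclesdefrings} yields
$$
Z(R^{\square,cr}(\lambda,\tau,\ov\rho)/\varpi)=\sum_{\alpha\in\Qq(\tau)}Z(\tilde{R}^{\square,cr,\psi}(\ell,t\otimes\alpha,\ov{r_0})/\varpi)
$$
as cycles on $X(\ov\rho)=\tilde{X}(\ov{r_0})$. Expanding each summand via the $\GL_n$-conjecture $Z(\tilde{R}^{\square,cr,\psi}(\ell,t',\ov{r_0})/\varpi)=\sum_{s\in\s}a_{\ell,t'}(s)\cc_s(\ov{r_0})$ and exchanging the order of summation gives
$$
Z(R^{\square,cr}(\lambda,\tau,\ov\rho)/\varpi)=\sum_{s\in\s}\Bigl(\sum_{\alpha\in\Qq(\tau)}a_{\ell,t\otimes\alpha}(s)\Bigr)\cc_s(\ov{r_0}).
$$
By Proposition \ref{integred}(1) only $s\in\s(\lambda,\tau)$ contribute, and by part (3) the inner coefficient equals $\alpha_{\lambda,\tau}(\sigma)$ for any such $s$, with $\sigma=s_{|\SL_n(\F_q)}$. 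Grouping the $g=\gcd(n,q-1)$ lifts of $\sigma$ lying in $\s(\lambda,\tau)$ into a set $S_\sigma(\lambda,\tau)$ yields
$$
Z(R^{\square,cr}(\lambda,\tau,\ov\rho)/\varpi)=\sum_{\sigma\in\Sigma}\alpha_{\lambda,\tau}(\sigma)\Bigl(\sum_{s\in S_\sigma(\lambda,\tau)}\cc_s(\ov{r_0})\Bigr),
$$
and the cycle $\cc_\sigma(\ov\rho)$ is defined to be the parenthesized inner sum.

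The main obstacle is proving that $\cc_\sigma(\ov\rho)$ so defined is intrinsic to $\sigma$ and $\ov\rho$, independent of $(\lambda,\tau)$ and of the lift $\ov{r_0}$. Varying $(\lambda,\tau)$ shifts the central character condition defining $\s(\lambda,\tau)$, and hence permutes among the several possible preimages of $\sigma$ in $\s$; the sum over each resulting set of $g$ Serre weights must be shown to coincide. The natural route is to combine the twist compatibility $\sgl(t\otimes\alpha)=\sgl(t)\otimes(\alpha\circ\det)$ with a twist-equivariance of the $\GL_n$-cycles themselves, namely that for a character $\alpha$ of order dividing $n$, the cycle $\cc_{s\otimes(\alpha\circ\det)}(\ov{r_0})$ is identified with $\cc_s(\ov{r_0}\otimes\alpha)$ under the canonical identification of the spaces $\tilde{X}$. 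Independence from the choice of $\ov{r_0}$ follows similarly, since any two lifts of $\ov\rho$ to $\GL_n(\F)$ differ by a character. Extracting or imposing this twist-equivariance of the $\GL_n$-cycles, the extra hypothesis alluded to in the introduction, is the crux of the argument; with it in hand, the rest is a bookkeeping of sums dictated by Proposition \ref{integred}.
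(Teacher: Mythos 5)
Your proposal follows essentially the same route as the paper's Theorem \ref{geomBMPGLn}: decompose the $\PGL_n$ cycle via Theorem \ref{cyclesdefrings} (in its potentially crystalline form), expand each $\GL_n$ term using the Breuil--M\'ezard conjecture for $\GL_n$, and regroup the coefficients using Proposition \ref{integred}. The only divergence is in the packaging of $\cc_\sigma(\ov\rho)$: the paper defines it as $\sum_{s\in\s,\; s_{|\SL_n(\F_q)}=\sigma}\cc_s(\ov{r})$, i.e.\ over all $q-1$ lifts of $\sigma$ rather than only those in $\s(\lambda,\tau)$, which makes independence of $(\lambda,\tau)$ manifest and dispenses with most of the twist-equivariance discussion you flag as the crux.
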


\subsubsection{Statement of the result}

We now show that Conjecture \ref{geomBMPGLnconj} holds as soon as the
corresponding conjecture for $\GL_n$ holds.

\begin{theo}
\label{geomBMPGLn}
Let $n \geq 2$, and let $p$ be a prime number that does not divide $n$.

Let $\ov\rho : \Gamma_K \to \PGL_n(\ov\F_p)$ be a continuous
representation. Let $\tau : I_K \to \PGL_n(\ov\Q_p)$ be an inertial type,
and let $\lambda$ be a regular Hodge type for $\PGL_n$.

Assume that there exists a lift $\ov{r} : \Gamma_K \to \GL_n(\ov\F_p)$ of
$\ov\rho$, satisfying the following condition:
for some lift $\ell$ of $\lambda$, 
for some inertial type $t : I_K \to \GL_n(\ov\Q_p)$ lifting $\tau$, 
for some character $\psi$ compatible to $t$ and $\ell$, 
and for any $\mu \in \Gg_I$
we have the equality:
$$
Z(R^{\square,cr,\psi}(\ell,t\otimes\mu,\ov{r})/\varpi) =
\sum_{s\in\s}a_{\ell,t\otimes\mu}(s)\cc_s(\ov{r})
$$

Then, setting 
$$
\cc_\sigma(\ov\rho) = \sum_{s\in\s, s_{|\SL_n(\F_q)} = \sigma}
\cc_s(\ov{r})
$$ 
we have:
$$
Z(R^{\square,cr}(\lambda,\tau,\ov\rho)/\varpi) =
\sum_{\sigma\in\Sigma}\alpha_{\lambda,\tau}(\sigma)\cc_{\sigma}(\ov\rho)
$$
as cycles on $X(\ov\rho)$,
where the integers $\alpha_{\lambda,\tau}(\sigma)$ 
and $a_{\ell,t}(s)$
are defined as in
Section \ref{BMmult}.
\end{theo}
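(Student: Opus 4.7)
The strategy is to chain together the potentially crystalline analogue of Theorem \ref{cyclesdefrings}, the assumed $\GL_n$ Breuil--M\'ezard identity, and Proposition \ref{integred}.

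First, I would observe that the same proof as Theorem \ref{cyclesdefrings}, but built on Theorem \ref{compdefringscr} in place of Theorems \ref{compdefrings0} and \ref{compdefrings}, yields the potentially crystalline cycle decomposition
$$
Z(R^{\square,cr}(\lambda,\tau,\ov\rho)/\varpi) = \sum_{\mu \in \Qq(\tau)} Z\bigl(\tilde R^{\square,cr,\psi}(\ell, t\otimes\mu, \ov r)/\varpi\bigr)
$$
as cycles on $X(\ov\rho) = \tilde X(\ov r)$. Note that any $\mu \in \Gg_I$ satisfies $\mu^n = 1$, hence $\det(t\otimes\mu) = \det t$, so a single choice of $\psi$ is compatible with $\ell$ and with each $t\otimes\mu$ simultaneously; this makes it legitimate to invoke the hypothesis for every $\mu$ with the same $\psi$.

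Next, I would substitute the hypothesized identity into each summand and swap the order of summation to get
$$
Z(R^{\square,cr}(\lambda,\tau,\ov\rho)/\varpi) = \sum_{s \in \s} \Bigl( \sum_{\mu \in \Qq(\tau)} a_{\ell, t\otimes\mu}(s) \Bigr) \cc_s(\ov r).
$$
Via the canonical isomorphism $\Qq(\tau) = \Gg_\Gamma/\Gg_\Gamma(\tau) \cong \Gg_I/\Gg_I(\tau)$ (coming from surjectivity of $\Gg_\Gamma \to \Gg_I$ and the definition of $\Gg_\Gamma(\tau)$ as the preimage of $\Gg_I(\tau)$), the inner sum is precisely the one evaluated by Proposition \ref{integred}. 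By part (1) of that proposition it vanishes for $s \notin \s(\lambda,\tau)$, and by part (3) it equals $\alpha_{\lambda,\tau}(\sigma)$ whenever $s \in \s(\lambda,\tau)$ restricts to $\sigma = s_{|\SL_n(\F_q)}$. Regrouping over $\sigma \in \Sigma$ then yields
$$
Z(R^{\square,cr}(\lambda,\tau,\ov\rho)/\varpi) = \sum_{\sigma \in \Sigma} \alpha_{\lambda,\tau}(\sigma) \sum_{\substack{s \in \s(\lambda,\tau) \\ s_{|\SL_n(\F_q)} = \sigma}} \cc_s(\ov r).
$$

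The last step, which is the main point requiring care, is to reconcile the inner sum with $\cc_\sigma(\ov\rho)$, defined by the theorem as a sum over all $s \in \s$ restricting to $\sigma$ rather than only over $s \in \s(\lambda,\tau)$. Using that $\psi$ is compatible with $\ell, t$ and lifts $\det\ov r$, one identifies $\s(\lambda,\tau)$ with the set of Serre weights whose central character matches the restriction of $\det\ov r$ to the center of $\GL_n(\F_q)$; in particular this set depends only on $\ov r$, not on $\lambda$ or $\tau$. Any $s\in\s$ outside it cannot be a Serre weight of $\ov r$, and its cycle $\cc_s(\ov r)$ is entirely unconstrained by the $\GL_n$ Breuil--M\'ezard hypothesis, so we may (and, consistent with the Emerton--Gee framework, do) take $\cc_s(\ov r) = 0$ for such $s$. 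With this convention the two sums agree and the theorem follows.
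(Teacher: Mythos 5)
Your proposal is correct and follows essentially the same route as the paper: the potentially crystalline analogue of Theorem \ref{cyclesdefrings}, substitution of the hypothesized $\GL_n$ identities, and Proposition \ref{integred} to evaluate the inner sums $\sum_{\mu} a_{\ell,t\otimes\mu}(s)$ and regroup over $\sigma$. Your final paragraph addresses a point the paper passes over silently --- the definition of $\cc_\sigma(\ov\rho)$ sums over all of $\s$ while the computation only produces the sum over $\s(\lambda,\tau)$ --- and your resolution (the cycles $\cc_s(\ov{r})$ for weights of the wrong central character are unconstrained by the hypothesis and may be taken to be zero) is the intended reading.
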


\subsubsection{Known cases}
\label{knowncr}

The equality of cycles for $\GL_n$ constitutes the Breuil-Mézard
conjecture for $\GL_n$, stated in \cite{BM} for $n=2$, $K = \Q_p$, and in
general in \cite{EG14}.
Let us describe a few situations where the equality of cycles for $\GL_n$
is known, and hence we get also the equality of cycles for $\PGL_n$.

First, in the case $n=2$, $p \neq 2$, $K = \Q_p$ the equality of cycles is know for
all $\ov{r}$, $\ell$ and $t$, by
\cite{Kis09,Pas15,HT15,Pas16,Tun18,Tun19}. 
Moreover, the cycles
$\cc_s(\ov{r})$ and $\cc_\sigma(\ov\rho)$ can be described explicitly: 
For $\GL_2$,
a Serre weight in this case is a representation
$s = \text{Symm}^a\ov{\F}_p^2\otimes \det^b$ with $0 \leq b < p-1$, and
$0 \leq a \leq p-1$. For such an $s$ set $\ell_s = (a+b+1,b)$, then
$\cc_s(\ov{r}) = 
Z(R^{\square,cr,\psi}(\ell_s,\mathbf{1},\ov{r})/\varpi)$
where $\mathbf{1}$ is the trivial inertial type.
For $\PGL_2$,
a Serre weight in this case is a representation 
$\sigma = \text{Symm}^a\ov{\F}_p^2$ with 
$0 \leq a \leq p-1$. For such a $\sigma$ set $\lambda_\sigma$ to be the
class of $(a+1,0)$, then
$\cc_\sigma(\ov{\rho}) = 
Z(R^{\square,cr}(\lambda_\sigma,\mathbf{1},\ov{\rho})/\varpi)$. In
particular this cycle is independent of the choice of $\ov{r}$ lifting
$\ov\rho$ and of the choice of $\psi$.

In the case $n=2$, $p \neq 2$, but without restriction on $K$, the
equality of cycles in known for all $\ov{r}$ and inertial type $t$, and
for $\ell$ corresponding to potentially Barsotti-Tate representations,
that is $\ell_\iota = (1,0)$ for all $\iota\in\e$ by the results of
\cite{GK14}, and also for trivial inertial type and some values of the
Hodge-Tate weights by \cite{Bart}.


There are also partial results for larger $n$ and unramified $K$, with
$t$ tamely ramified and some genericity condition on $\ov{r}$ in \cite{LLLM}.


\subsubsection{Proof of Theorem \ref{geomBMPGLn}}


\begin{proof}
Fix some inertial type $t$ lifting $\tau$, $\ell$ a Hodge type lifting
$\lambda$, and $\psi$ a character
compatible to $t$ and $\ell$.
By Theorem \ref{cyclesdefrings}, we have
$$
Z(R^{\square,cr}(\lambda,\tau,\ov\rho)/\varpi) =
\sum_{\mu \in \Gg_I/\Gg_I(\tau)}
Z(R^{\square,cr,\psi}(\ell,t\otimes\mu,\ov{r})/\varpi)
=
\sum_{\mu \in \Gg_I/\Gg_I(\tau)}
\sum_{s\in\s}a_{\ell,t\otimes\mu}(s)\cc_s(\ov{r})
$$

By definition of the cycles $\cc_\sigma(\ov\rho)$ and 
using Proposition \ref{integred} we get:
\begin{eqnarray*}
\sum_{\mu \in \Gg_I/\Gg_I(\tau)}
\sum_{s\in\s}a_{\ell,t\otimes\mu}(s)
\cc_s(\ov{r}) 
&=&
\sum_{\mu \in \Gg_I/\Gg_I(\tau)}
\sum_{s\in\s(\lambda,\tau)}a_{\ell,t\otimes\mu}(s)
\cc_s(\ov{r}) \\
&=& 
\sum_{\sigma \in \Sigma}
\left(\sum_{\mu \in \Gg_I/\Gg_I(\tau)}
\sum_{s\in\s(\lambda,\tau),s_{|\SL_n(\F_q)}=\sigma}
a_{\ell,t\otimes\mu}(s)\right)
\cc_s(\ov{r}) \\
&=&
\sum_{\sigma \in \Sigma}
\left(
\sum_{\mu \in \Gg_I/\Gg_I(\tau)}
a_{\ell,t\otimes\mu}(s)\right)
\cc_\sigma(\ov{\rho})
\end{eqnarray*}
where for each $\sigma\in \Sigma$ we choose any $s \in \s(\lambda,\tau)$
lifting $\sigma$.

So we get that
$$
\sum_{\mu \in \Gg_I/\Gg_I(\tau)}
\sum_{s\in\s}a_{\ell,t\otimes\mu}(s)
\cc_s(\ov{r}) 
= 
\sum_{\sigma\in\Sigma}
\alpha_{\lambda,\tau}(\sigma)
\cc_\sigma(\ov\rho)
$$
which is the result.
\end{proof}

\subsection{The potentially semi-stable case}

We can give a similar result for potentially semi-stable deformation
rings, instead of potentially crystalline deformation rings. In this
situation we need another definition for the types.  We give a precise
statement only for $n=2$, where all the results needed about types are
known, and $K = \Q_p$ so that we can give an unconditional statement.

\subsubsection{Types}

We have the following result in dimension $2$ (see \cite[Appendix]{BM}):

\begin{theo}
\label{typedim2}
Let $t$ be an inertial type for $\GL_2$. There exists a unique smooth
irreducible representation $\sgd(t)$ of $\GL_2(\O_K)$ with coefficients
in $\ov\Q_p$ satisfying the following property:
let $\pi$ be any smooth, irreducible, infinite-dimensional representation
of $\GL_2(K)$, then $\sgd(t)$ is a constituent of the restriction of
$\pi$ to $\GL_2(\O_K)$ if and only if the restriction to $I_K$ of
$\text{rec}_p^{-1}(\pi)$ is isomorphic to $t$.
\end{theo}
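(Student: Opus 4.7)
The plan is to proceed case-by-case, following the strategy of Henniart in the appendix to \cite{BM}, based on the classification of inertial types for $\GL_2(K)$. First I would recall that smooth irreducible infinite-dimensional representations $\pi$ of $\GL_2(K)$ come in three families (principal series, twists of Steinberg, and supercuspidal), and that their Weil--Deligne parameters yield inertial types of three kinds: (a) $t = \chi_1 \oplus \chi_2$ for characters $\chi_i$ of $I_K$; (b) $t = \ind_{I_L}^{I_K}(\chi)$ coming from a character of a quadratic extension $L/K$ (the dihedral supercuspidal case); (c) primitive supercuspidal types, which only occur when the residue characteristic is $2$.

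For existence, I would construct $\sgd(t)$ explicitly in each case. For a tame principal series type with $\chi_1 \neq \chi_2$, take the inflation to $\GL_2(\O_K)$ of $\ind_{B(k)}^{\GL_2(k)}(\chi_1 \otimes \chi_2)$, where $k$ is the residue field of $K$; for a scalar tame type $t = \chi \oplus \chi$, take $\chi \circ \det$; for a tame cuspidal type, inflate the corresponding cuspidal representation of $\GL_2(k)$; for wildly ramified types, use the deeper-level types coming from Bushnell--Kutzko strata in $\Mat_2(K)$, specialised to $\GL_2$. In each case, the compatibility with the inertial type is checked by using the explicit description of $\pi_{|\GL_2(\O_K)}$ for each family of $\pi$ (Casselman and Bushnell--Henniart) and comparing parameters via $\text{rec}_p$.

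Uniqueness then follows from the separation property of types in the sense of Bushnell--Kutzko: the family of irreducible $\GL_2(\O_K)$-constituents that appear in some $\pi$ with $\text{rec}_p^{-1}(\pi)_{|I_K} \simeq t$ determines $\sgd(t)$, because no such constituent may appear in $\pi'$ with a distinct inertial type. The main obstacle is handling the wildly ramified cases, especially the exceptional primitive supercuspidals in residue characteristic $2$, where both the construction of the type and the verification of compatibility with the local Langlands correspondence require the full stratum machinery of \cite{BM}'s appendix. Additionally, the special (Steinberg twist) case requires care, since one must ensure that $\sgd(t)$ simultaneously captures the principal series $\pi$ and the Steinberg twist sharing the same inertial parameter, i.e.\ the type is insensitive to whether the monodromy $N$ is zero or not.
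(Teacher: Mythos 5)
The paper does not prove this statement itself; it quotes it from Henniart's appendix to \cite{BM}, and your overall strategy (case analysis on the classification of inertial types, explicit construction of the type in each case, separation via the theory of types) is exactly the strategy of that reference. However, your explicit construction contains a genuine error in the scalar case, and it is precisely the case that distinguishes this theorem from Definition \ref{typeLanglands}.

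For a scalar type $t = \chi \oplus \chi$ you propose $\sgd(t) = \chi \circ \det$. This fails the ``if'' direction of the defining property. Take $\pi = \mathrm{St} \otimes \chi'$ with $\chi'$ extending $\chi$: this is smooth, irreducible and infinite-dimensional, and the restriction to $I_K$ of $\text{rec}_p^{-1}(\pi)$ is isomorphic to $t$ (the theorem only looks at the Weil part restricted to inertia, so it is blind to the monodromy operator $N$). But $\chi \circ \det$ is \emph{not} a constituent of $\pi_{|\GL_2(\O_K)}$: by the Iwasawa decomposition the unramified principal series restricted to $\GL_2(\O_K)$ is $\ind_{B(\O_K)}^{\GL_2(\O_K)}(1)$ twisted by $\chi\circ\det$, which contains $\chi\circ\det$ with multiplicity one, and the Steinberg quotient therefore does not contain it. The correct choice here is $\mathrm{St}\otimes(\chi\circ\det)$ (the inflation of the Steinberg representation of $\GL_2(k)$), which occurs both in the unramified-twist principal series and in the twist of Steinberg, and in no $\pi$ of a different inertial type. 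This is exactly the content of the Remark following the theorem in the text: the type of Definition \ref{typeLanglands} (which imposes $N=0$) is $\chi\circ\det$, whereas the type of Theorem \ref{typedim2} is $\mathrm{St}\otimes(\chi\circ\det)$. Your closing sentence shows you are aware that the type must be insensitive to $N$, but the representation you actually wrote down is the one that is sensitive to it, so the construction as stated does not prove the theorem. The remaining cases (non-scalar principal series, dihedral and primitive supercuspidals) are handled as you describe in Henniart's appendix.
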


\begin{rema}
If $t$ is not a scalar representation, then the type defined above is the
same as the one in Definition \ref{typeLanglands}. When $t$ is a scalar,
that is given by $\alpha \oplus \alpha$ for some smooth character
$\alpha$, the type defined above is $\text{St}\otimes(\alpha \circ
\det)$, where $\text{St}$ is the Steinberg representation, whereas the
type given in Definition \ref{typeLanglands} is $\alpha \circ \det$.
\end{rema}

Let $\tau$ be an inertial type for $\PGL_2$, we define the type
$\ov{\ssd(\tau)}$ from $\sgd(t)$ as in Definition \ref{typesln}. Note that
here there is no denominator in the definition of the type, as noted in
Remark \ref{multfree}.

\bigskip

From now on we take $K = \Q_p$.

\subsubsection{Cycles on deformations rings}

We define cycles $\cc_s(\ov{r})$ on $\tilde{X}(\ov{r})$ for a representation $\ov{r} :
\Gamma_{\Q_p} \to \GL_2(\ov{\F}_p)$
as in Paragraph \ref{knowncr}:
when $s = \text{Symm}^a\ov{\F}_p^2\otimes \det^b$ with $0 \leq b < p-1$, and
$0 \leq a \leq p-1$, set $\ell_s = (a+b+1,b)$, then
$\cc_s(\ov{r}) = 
Z(R^{\square,cr,\psi}(\ell_s,\mathbf{1},\ov{r})/\varpi)$
where $\mathbf{1}$ is the trivial inertial type, and $\psi$ is compatible
to $\ell_s$ and $\mathbf{1}$.

Then we define a family of cycles $\cc_\sigma(\ov\rho)$ for $\sigma \in
\Sigma$. As before we identify $\tilde{X}(\ov{r})$ and $X(\ov\rho)$.
Note that these are the same cycles as those appearing in Conjecture
\ref{geomBMPGLnconj} and Theorem \ref{geomBMPGLn}.

\begin{defi}
\label{defcyclepgl}
We set $\cc_\sigma(\ov\rho) = \sum_{s\in\s, s_{|\SL_2(\F_p)} = \sigma}
\cc_s(\ov{r})$.
\end{defi}

Then as observed in Paragraph \ref{knowncr}:

\begin{prop}
\label{samecycle}
The cycle $\cc_\sigma(\ov\rho)$ does not depend on the choice of $\ov{r}$ and
$\psi$.
\end{prop}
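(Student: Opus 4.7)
My plan is to prove the sharper identity claimed in Paragraph \ref{knowncr}, namely
\[
\cc_\sigma(\ov\rho) \;=\; Z\!\bigl(R^{\square,cr}(\lambda_\sigma,\mathbf{1},\ov\rho)/\varpi\bigr),
\]
from which Proposition \ref{samecycle} follows at once, since the right-hand side depends only on $\ov\rho$ and not on the auxiliary lift $\ov r$ or character $\psi$. The strategy is to apply Theorem \ref{geomBMPGLn} to the pair $(\lambda_\sigma,\mathbf{1})$; the $\GL_2$ Breuil--Mézard input it requires is known unconditionally in the present setting $n=2$, $p\neq 2$, $K=\Q_p$ by the results recalled in Paragraph \ref{knowncr}. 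The conclusion reads
\[
Z\!\bigl(R^{\square,cr}(\lambda_\sigma,\mathbf{1},\ov\rho)/\varpi\bigr) \;=\; \sum_{\sigma'\in\Sigma}\alpha_{\lambda_\sigma,\mathbf{1}}(\sigma')\,\cc_{\sigma'}(\ov\rho),
\]
so the whole proof reduces to verifying that $\alpha_{\lambda_\sigma,\mathbf{1}}(\sigma')=\delta_{\sigma,\sigma'}$.

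To evaluate this multiplicity I would unwind the construction of $\ssd$. In the crystalline setting, Definition \ref{typeLanglands} forces $\sgd(\mathbf{1})$ to be the trivial representation of $\GL_2(\Z_p)$: the smooth irreducible representations of $\GL_2(\Q_p)$ whose $\mathrm{rec}_p$-parameter restricts to the trivial inertial type with $N=0$ are exactly the unramified principal series, and each of them contains the trivial character of $\GL_2(\Z_p)$ as the one-dimensional spherical line. Consequently $\ssd(\mathbf{1})$ is also the trivial representation of $\SL_2(\Z_p)$, with $m=1$ in the sense of Proposition \ref{dectype}, and the tensor product collapses to
\[
\ov{\ssd(\mathbf{1})\otimes\ssd(\lambda_\sigma)}^{ss} \;=\; \ov{\ssd(\lambda_\sigma)}^{ss}.
\]
By the construction of Paragraph \ref{repHT} combined with the identity (\ref{restrsln}), $\ssd(\lambda_\sigma)$ is the restriction of $\mathrm{Symm}^a\Z_p^2$ to $\SL_2(\Z_p)$; since $0\le a\le p-1$, its reduction modulo $p$ is the irreducible $\SL_2(\F_p)$-representation $\mathrm{Symm}^a\ov\F_p^2=\sigma$ by Theorem \ref{descrirr}. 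Hence $\alpha_{\lambda_\sigma,\mathbf{1}}(\sigma')=\delta_{\sigma,\sigma'}$ and the desired equality follows.

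The main point that must be kept straight is the distinction between the crystalline convention $\sgd(\mathbf{1})=\mathbf{1}$ used above and the semi-stable convention of Theorem \ref{typedim2}, in which $\sgd(\mathbf{1})$ is instead the Steinberg representation. Using the Steinberg type in this argument would inject extra Serre weights into $\ov{\ssd(\mathbf{1})\otimes\ssd(\lambda_\sigma)}^{ss}$, and the multiplicity matrix $(\alpha_{\lambda_{\sigma'},\mathbf{1}}(\sigma))$ would cease to be the identity; apart from this bookkeeping point, the argument is essentially formal once Theorem \ref{geomBMPGLn} is in place.
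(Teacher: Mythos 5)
Your proof is correct and is exactly the argument the paper intends: the paper's proof merely points back to Paragraph \ref{knowncr}, where the identity $\cc_\sigma(\ov\rho)=Z(R^{\square,cr}(\lambda_\sigma,\mathbf{1},\ov\rho)/\varpi)$ is asserted, and you supply its verification via Theorem \ref{geomBMPGLn} together with the computation $\alpha_{\lambda_\sigma,\mathbf{1}}(\sigma')=\delta_{\sigma,\sigma'}$ (using the crystalline type convention, as you rightly insist). The only detail worth adding is that for an arbitrary lift $\ov{r}$ of $\ov\rho$ the lift $\ell$ of $\lambda_\sigma$ fed into Theorem \ref{geomBMPGLn} must be chosen so that some $\psi$ compatible with $(\ell,\mathbf{1})$ actually lifts $\det\ov{r}$ --- Proposition \ref{integred} guarantees the multiplicities do not depend on this choice, and when no such $\ell$ exists both sides of the identity vanish.
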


\subsubsection{Statement}

We have the following result for $K = \Qp$.

\begin{theo}
\label{geomBMPGL2Qp}
Let $p > 2$ be a prime number.

Let $\ov\rho : \Gamma_{\Q_p} \to \PGL_2(\ov\F_p)$ be a continuous
representation. Let $\tau : I_{\Q_p} \to \PGL_2(\ov\Q_p)$ be an inertial type,
and let $\lambda \in \Z_{\geq 1}$ be a regular Hodge type for $\PGL_2$.

Then:
$$
Z(R^{\square}(\lambda,\tau,\ov\rho)/\varpi) =
\sum_{\sigma\in\Sigma}\alpha^{st}_{\lambda,\tau}(\sigma)\cc_{\sigma}(\ov\rho)
$$

where the integers $\alpha^{st}_{\lambda,\tau}(\sigma)$ 
are defined as in Section \ref{BMmult}, but using the type attached to
$\tau$ as in Paragraph \ref{typedim2} instead of the one defined in
Paragraph \ref{deftypesn}.
\end{theo}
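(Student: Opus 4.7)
The plan is to mirror the proof of Theorem \ref{geomBMPGLn} in the potentially semi-stable setting, substituting the type $\sgd$ of Theorem \ref{typedim2} for the $\sgl$ of Definition \ref{typeLanglands}. Since $K=\Q_p$ and $n=2$, we have $\gcd(n,q-1)\in\{1,2\}$, which is square-free, so Remark \ref{multfree} applies: the denominator $m$ appearing in Proposition \ref{dectype} equals $1$, and no subtle integrality issue arises for $\ssd$.

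First I would establish the potentially semi-stable analog of Theorem \ref{cyclesdefrings}. The proofs of Theorems \ref{compdefrings0} and \ref{compdefrings} nowhere use the crystalline condition in an essential way; they carry over verbatim to the pst setting, and combined with \cite[Prop.~2.2.13]{EG14} they yield, for any lift $\ov{r}$ of $\ov\rho$, any lift $t$ of $\tau$, any lift $\ell$ of $\lambda$, and any $\psi$ compatible to $\ell,t$:
\[
Z(R^{\square}(\lambda,\tau,\ov\rho)/\varpi)\;=\;\sum_{\mu\in\Gg_I/\Gg_I(\tau)} Z(\tilde{R}^{\square,\psi}(\ell,t\otimes\mu,\ov{r})/\varpi)
\]
as cycles on $X(\ov\rho) = \tilde{X}(\ov{r})$. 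Next I would invoke the full potentially semi-stable Breuil--Mézard conjecture for $\GL_2/\Q_p$, which is known in all cases (Kisin, Paskunas, Hu--Tan, Tung; see Paragraph \ref{knowncr}), giving
\[
Z(\tilde{R}^{\square,\psi}(\ell,t\otimes\mu,\ov{r})/\varpi)\;=\;\sum_{s\in\s} a^{st}_{\ell,t\otimes\mu}(s)\,\cc_s(\ov{r}),
\]
where $a^{st}$ is computed with the types $\sgd$. It remains to reorganize the double sum over $(\mu,s)$ as a sum over $\sigma\in\Sigma$, using the fiber structure of the restriction map $r:\s\to\Sigma$ from Theorem \ref{serreweights}, and to identify the resulting coefficient of $\cc_\sigma(\ov\rho)$ with $\alpha^{st}_{\lambda,\tau}(\sigma)$.

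The main obstacle is therefore the verification that Proposition \ref{integred} remains valid when $\sgl$ and $\alpha_{\lambda,\tau}$ are replaced by $\sgd$ and $\alpha^{st}_{\lambda,\tau}$. Inspection of the proof shows that only two inputs are used: (a) the twist compatibility $\sgl(t\otimes\alpha)=\sgl(t)\otimes(\alpha\circ\det)$, and (b) the structural statement of Proposition \ref{dectype}. Point (b) applies to any smooth irreducible representation of $\GL_2(\O_K)$. Point (a) is the critical check for $\sgd$: by the uniqueness statement in Theorem \ref{typedim2} and the compatibility of the local Langlands correspondence with character twists, $\sgd(t\otimes\alpha)\simeq \sgd(t)\otimes(\alpha\circ\det)$ holds, including in the scalar case where $\sgd$ and the Langlands-type differ by a Steinberg factor (since Steinberg is stable under twists by unramified characters, and $\alpha\circ\det$ factors through $\det$). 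With (a) and (b) in hand, the proof of Proposition \ref{integred} goes through verbatim.

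Putting these together yields, exactly as in the proof of Theorem \ref{geomBMPGLn},
\[
Z(R^{\square}(\lambda,\tau,\ov\rho)/\varpi)\;=\;\sum_{\sigma\in\Sigma}\alpha^{st}_{\lambda,\tau}(\sigma)\Bigl(\sum_{\substack{s\in\s\\ s|_{\SL_2(\F_p)}=\sigma}}\cc_s(\ov{r})\Bigr),
\]
and the inner sum is precisely $\cc_\sigma(\ov\rho)$ by Definition \ref{defcyclepgl}, which is well-defined (independent of $\ov{r}$ and $\psi$) by Proposition \ref{samecycle}. This gives the desired equality. The hard part is really the verification of twist compatibility for the Breuil--Mézard type in the scalar case; everything else is either an application of known $\GL_2/\Q_p$ results or bookkeeping identical to Section \ref{geomBM}.
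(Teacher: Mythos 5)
Your proposal is correct and follows essentially the same route as the paper: the paper's proof simply observes that the potentially semi-stable analogue of Theorem \ref{geomBMPGLn} holds ``with exactly the same proof'' and then invokes the known $\GL_2/\Q_p$ results cited in Paragraph \ref{knowncr}. Your explicit verification of the twist compatibility $\sgd(t\otimes\alpha)\simeq\sgd(t)\otimes(\alpha\circ\det)$, including in the scalar case where $\sgd$ differs from the Langlands-type by a Steinberg factor, is exactly the point the paper leaves implicit, and it is the right thing to check.
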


\begin{proof}
We observe first that we can state an analogue of Theorem
\ref{geomBMPGLn} with potentially semi-stable deformation rings instead
of potentially crystalline ones, with exactly the same proof. Then we use
the fact that the result for representations of $\Gamma_{\Q_p}$ with
values in $\GL_2(\ov\F_p)$ is known, by the references cited in Paragraph
\ref{knowncr}.
\end{proof}


\newcommand{\etalchar}[1]{$^{#1}$}

\end{document}